\newtheorem{theorem}{Theorem}[section]
\newtheorem{lemma}[theorem]{Lemma}
\newtheorem{corollary}[theorem]{Corollary}
\theoremstyle{definition}
\newtheorem{definition}[theorem]{Definition}
\theoremstyle{remark}
\newtheorem{remark}[theorem]{Remark}
\numberwithin{equation}{section}
\begin{document}

\title[$k$-Elongated Plane Partitions and Divisibility by 5]{The Localization Method Applied to $k$-Elongated Plane Partitions and Divisibility by 5}


\author{Koustav Banerjee}
\address{}
\curraddr{}
\email{}
\thanks{}

\author{Nicolas Allen Smoot}
\address{}
\curraddr{}
\email{}
\thanks{}

\keywords{Partition congruences, modular functions, plane partitions, partition analysis, Ramanujan's theta functions, localization method, modular curve, Riemann surface}

\subjclass[2010]{Primary 11P83, Secondary 30F35}

\date{}

\dedicatory{}

\begin{abstract}
The enumeration $d_k(n)$ of $k$-elongated plane partition diamonds has emerged as a generalization of the classical integer partition function $p(n)$.  We have discovered an infinite congruence family for $d_5(n)$ modulo powers of 5.  Classical methods cannot be used to prove this family of congruences.  Indeed, the proof employs the recently developed localization method, and utilizes a striking internal algebraic structure which has not yet been seen in the proof of any congruence family.  We believe that this discovery poses important implications on future work in partition congruences.
\end{abstract}

\maketitle

\section{Introduction}

The theory of partition congruences has developed substantially over the last century, beginning with Ramanujan's revolutionary congruences for the integer partition function $p(n)$.  Prior to the twentieth century, the sequence $(p(n))_{n\ge 0}$ was thought to be pseudorandom with respect to its divisibility properties.  Ramanujan changed this perception almost overnight, with his discovery of three remarkable infinite congruence families for $p(n)$.  We give these families here, with an appropriate adjustment on the powers of 7.
\begin{align}
\text{If } n,\alpha\in\mathbb{Z}_{\ge 1} \text{ and } 24n\equiv 1\pmod{\ell^{\alpha}},&\text{ then } p(n)\equiv 0\pmod{\ell^{\beta}},\label{origconjecR5711}
\end{align} with
\begin{align}
\beta := 
\begin{cases}
\alpha & \text{ if } \ell\in\{5,11\},\\
\left\lfloor \frac{\alpha}{2} \right\rfloor + 1 & \text{ if } \ell=7.
\end{cases}\label{origconjecR5711B}
\end{align}  Other congruence families have been found to exist for a variety of partition functions and related arithmetic sequences which at least superficially resemble those of (\ref{origconjecR5711}).  However, the difficulty of proving such congruence families can vary substantially, with some contemporary families still resisting proof today.

The congruence family discussed in this paper was discovered by the first author, and regards the enumeration $d_5(n)$ of $5$-elongated plane partitions of $n$.  The function is part of a large class of partition functions, $d_k(n)$, which generalize $p(n)$, and whose properties have been closely studied by partition theorists over the last few years.

The congruence family is interesting on its own; however, the proof method is the most important aspect of our work.  The classical techniques used to prove Ramanujan's results (\ref{origconjecR5711}) are insufficient to complete a proof.  Instead, the recently developed localization method is utilized.  The most critical part of the proof involves a surprising internal algebraic structure on the rational polynomials representing the individual cases of the family.  It is this structure and possible analogues for other functions that we believe may lead to a deeper understanding of the theory of partition congruences, and possible resolution of standing conjectures.
\subsection{$k$-Elongated Plane Partitions}
Over the years George Andrews and Peter Paule have produced an extremely influential series of papers developing an algorithmic methodology on MacMahon's Partition Analysis \cite[Vol. II, Section VIII]{MacMahon}.  Among the many contributions of this series of papers has been the development of the $k$-elongated plane partition function $d_k(n)$, which is enumerated by the following generating function:
\begin{align}
D_k(q) := \sum_{n=0}^{\infty}d_k(n)q^n = \frac{(q^2;q^2)_{\infty}^k}{(q;q)_{\infty}^{3k+1}}\label{D2}.
\end{align}  For our purposes, $k\in\mathbb{Z}$ with $k\ge 0$.  In this and all subsequent arguments, we we employ the $q$-Pochhammer notation in which $a,q\in\mathbb{C}$ and $|q|<1$:
\begin{align}
& (a;q)_{0}:=1, \ \ \ (a;q)_{N}:=\prod_{k=0}^{N-1}(1-aq^k), \ n \geq 1;\label{qpochdefna}\\
& (a;q)_{\infty}:=\lim_{N\rightarrow \infty}(a;q)_{N}, \ \ |q|<1.\label{qpochdefnb}
\end{align}

This function counts the number of directed graphs with the form shown in the diagrams below, in which each $a_j\in\mathbb{Z}_{\ge 0}$, a directed edge $a_{b}\rightarrow a_c$ indicates that $a_b\ge a_c$, and $\sum_{j=0}^{m}a_{(2k+1)j+1}=n$.


\begin{figure}[h]
\caption{A typical length 1 $k$-elongated partition diamond.}\label{kelong1}
\centering
\includegraphics{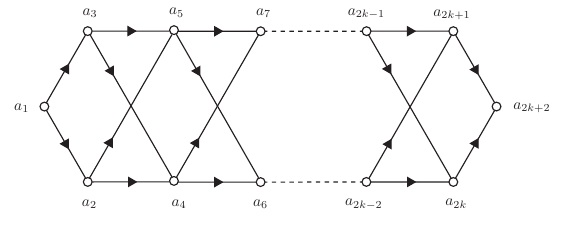}
\end{figure}
\begin{figure}[h]
\caption{A typical length $m$ $k$-elongated partition diamond.}\label{kelong2}
\centering
\includegraphics{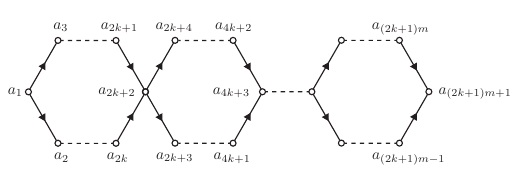}
\end{figure}



Notice that $d_k(n)$ serves as a generalization of the unrestricted integer partition function $p(n)=d_0(n)$.  As such, it may be expected that $d_k(n)$ may exhibit many arithmetic properties similar to those of $p(n)$.  For example, a famous result for $p(n)$ due to Ramanujan \cite{Ramanujan} is
\begin{align}
p(5n+4)\equiv 0\pmod{5}\label{RamOrig5np4}
\end{align} for all $n\ge 0$.  Recently da Silva, Hirschhorn, and Sellers \cite{dasilvaet} discovered that
\begin{align}
d_5(5n+4)\equiv 0\pmod{5}\label{dahirselcong}
\end{align} for all $n\ge 0$.  It is certainly tempting to suspect that \textit{infinite} families of congruence might exist for $d_k(n)$, in form similar to that of Ramanujan's family of congruences for $p(n)$ modulo powers of 5.  These are generally much more difficult to prove than a congruence in a simple arithmetic progression, e.g., (\ref{RamOrig5np4}) or (\ref{dahirselcong}).  Some have already been discovered and proved, e.g., Conjecture 3 of \cite{AndrewsPaule}, which was proved in \cite{Smoot2}.

Our central result is another such family---a simple and elegant extension of (\ref{dahirselcong}) to an infinite family of congruences of $d_5(n)$:
\begin{theorem}\label{Thm12}
Let $n,\alpha\in\mathbb{Z}_{\ge 1}$ such that $4n\equiv 1\pmod{5^{\alpha}}$.  Then $d_5(n)\equiv 0\pmod{5^{\left\lfloor\alpha/2\right\rfloor+1}}$.
\end{theorem}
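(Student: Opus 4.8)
The plan is to recast Theorem~\ref{Thm12} as a statement about a sequence of modular functions on $X_0(10)$ and to prove it via the localization method. For each $\alpha\ge 1$ let $\delta_\alpha$ be the unique integer with $0\le\delta_\alpha<5^\alpha$ and $4\delta_\alpha\equiv 1\pmod{5^\alpha}$ (so $\delta_1=4$, $\delta_2=19$, and so on); then the theorem is equivalent to $d_5(5^\alpha n+\delta_\alpha)\equiv 0\pmod{5^{\lfloor\alpha/2\rfloor+1}}$ for all $n\ge 0$, the case $\alpha=1$ being (equivalent to) the da~Silva--Hirschhorn--Sellers congruence (\ref{dahirselcong}). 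Rewriting (\ref{D2}) for $k=5$ in terms of the Dedekind eta function gives $D_5(q)=q^{1/4}\,\eta(2\tau)^5/\eta(\tau)^{16}$ (with $q=e^{2\pi i\tau}$), and to the progression $n\equiv\delta_\alpha\pmod{5^\alpha}$ one attaches a normalized generating function $L_\alpha(q)=\Phi_\alpha(q)\sum_{n\ge 0}d_5(5^\alpha n+\delta_\alpha)q^n$, where $\Phi_\alpha$ is an explicit eta-quotient prefactor chosen so that each $L_\alpha$ is a modular function for $\Gamma_0(10)$, holomorphic on the upper half-plane, with controlled pole orders at the four cusps of $X_0(10)$. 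Since the prefactor has integer $q$-expansion with unit constant term, the theorem becomes the assertion that the $5$-adic valuation $v_5(L_\alpha)$ of the $q$-expansion coefficients is at least $\lfloor\alpha/2\rfloor+1$.

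Next I would build the algebraic framework. Although $X_0(10)$ has genus $0$ and hence a Hauptmodul $x$, the essential point is that the module in which the $L_\alpha$ naturally sit does not reduce to a principal ideal in $\mathbb{Z}[x]$ --- which is precisely why the classical Paule--Radu single-variable machinery does not apply and the localization method is required. Instead one passes to a suitable localization $R$ of $\mathbb{Z}_{(5)}[x]$ and proves that the $R$-module $M$ containing all the relevant functions is free of finite rank --- I expect rank $2$ --- with an explicit basis $\{1,y\}$ for a second modular function $y$. The crux is then a \emph{recurrence}: letting $\mathcal{U}$ denote the appropriate $U_5$-dissection operator (a twist of the Atkin--Lehner--conjugated $U_5$ operator, possibly depending on the parity of $\alpha$, sending the progression mod $5^\alpha$ to the progression mod $5^{\alpha+1}$), one shows $L_{\alpha+1}=\mathcal{U}(L_\alpha)$ and that $\mathcal{U}$ maps $M$ into $M$. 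Writing $\mathcal{U}$ as a matrix over $R$ in the basis $\{1,y\}$ then reduces the whole problem to the $5$-adic analysis of this matrix together with the two initial vectors $L_1$ and $L_2$.

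The striking internal structure --- and the source of the bound $\lfloor\alpha/2\rfloor+1$, in contrast to the linear growth in Ramanujan's congruence family for $p(n)$ --- should manifest as a special factorization of the operator matrix: although $\mathcal{U}$ itself carries no guaranteed factor of $5$, the two-step operator $\mathcal{U}^2$ should equal $5$ times a matrix over $R$, so that each pair of iterations gains exactly one power of $5$. Granting this, the induction closes immediately: verify the base cases $v_5(L_1)\ge 1$ (which is (\ref{dahirselcong})) and $v_5(L_2)\ge 2$ by a finite coefficient computation, and then $v_5(L_{\alpha+2})\ge 1+v_5(L_\alpha)$ gives $v_5(L_\alpha)\ge\lfloor\alpha/2\rfloor+1$ for all $\alpha$ by running the odd and even chains separately.

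I expect the main obstacle to be the explicit determination and $5$-adic control of this structure, in two parts. First one must pin down $M$: choose the Hauptmodul $x$ and the second generator $y$, compute the exact pole orders of $L_\alpha$ and of $\mathcal{U}(1)$, $\mathcal{U}(y)$ at all four cusps of $X_0(10)$, and prove that $M$ really is free of the claimed rank over $R$. Second, and most delicate, one must establish the divisibility of $\mathcal{U}^2$ by $5$ --- equivalently, the precise shape of $\mathcal{U}$ modulo powers of $5$ --- which is where the ``internal algebraic structure'' has to be isolated and proved, presumably by computing the matrix entries as explicit polynomials in $x$ and then controlling them modulo $5$ via resultant or Gr\"obner-basis arguments. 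Once the module, the recurrence, and this $5$-divisibility are in hand, the remaining eta-quotient bookkeeping and the final induction are routine.
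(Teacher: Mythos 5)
Your overall framing---normalizing the progressions into a sequence $L_\alpha$ of modular functions on $\mathrm{X}_0(10)$, expressing them in a localization of $\mathbb{Z}[x]$ at a Hauptmodul (the paper localizes at powers of $1+5x$), relating $L_\alpha$ to $L_{\alpha+1}$ by parity-alternating twisted $U_5$ operators, and closing an induction in which each pair of steps gains one factor of $5$---matches the paper's setup. But your proposed engine for the $5$-adic gain is exactly the step that fails here. You posit an \emph{operator-level} divisibility, $\mathcal{U}^2 = 5\cdot(\text{matrix over }R)$, which would give the extra power of $5$ uniformly on the whole module. In this problem no such factorization exists: in the paper's proof of Theorem \ref{gofrom1to0}, the valuation bound for $U^{(1)}$ genuinely fails termwise in the low-degree coefficients ($x^1$ and $x^2$), and the missing power of $5$ is recovered only because the specific linear combinations $\sum_{m=2}^{5}s(m)h_1(m,n,1)$ and $\sum_{m=4}^{8}s(m)h_1(m,n,2)$ vanish modulo $5$ --- i.e., because the coefficient vector of the \emph{input} lies in $\ker(\Omega)$ for an explicit homomorphism $\Omega$ onto $(\mathbb{Z}/5\mathbb{Z})^2$. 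The divisibility is thus a property of the operator restricted to a proper subspace $\mathcal{V}^{(1)}_n\subset\hat{\mathcal{V}}_n$, not of the operator itself; this is precisely the ``internal algebraic structure'' the paper says classical and earlier localization arguments cannot see. Consequently your induction is also missing its hardest ingredient: one must prove \emph{stability}, namely that $\tfrac15 U^{(0)}\circ U^{(1)}$ maps the kernel-constrained space back into a kernel-constrained space (Theorem \ref{oddbacktoodd}), which the paper does via the congruences $h_i(m,n,r)\equiv h_i(m,n-5,r)\pmod 5$ (Theorem \ref{congcor1a}) and an explicit ideal-membership computation for the $\hat t(w)$. Correspondingly, your base case must check not only $v_5(L_1)\ge 1$ but also that the coefficient vector of $L_1/5$ lies in $\ker(\Omega)$.

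A secondary, more structural misreading: you expect the relevant module to be free of rank $2$ over the localized ring with a second generator $y$. Here $\mathrm{X}_0(10)$ has genus $0$, a single Hauptmodul $x$ suffices, and the obstruction is the cusp count (four cusps), which is what the localization at $(1+5x)$ is designed to absorb; rank-$\ge 2$ modules of the kind you describe arise in the genus-one situations (e.g., the Andrews--Sellers family), not here. That guess by itself would not sink the argument, but combined with the absent kernel/stability mechanism, the proof as proposed would not go through: the claimed $5$-divisibility of $\mathcal{U}^2$ over the ambient module is false, and without the $\Omega$-kernel bookkeeping there is no way to propagate the needed divisibility from one pair of steps to the next.
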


It is this result that we will prove in this paper.
\subsection{History}\label{historysectionetc}
Much of our modern day understanding of the arithmetic properties of the integer partition function $p(n)$ derives from Ramanujan's groundbreaking work of (\ref{origconjecR5711}) in 1919, albeit without the key modification (\ref{origconjecR5711B}).

This incredible result took decades to prove.  Ramanujan himself appears to have understood the proof in the case that $\ell=5$ \cite{BO}.  The first published proof for $\ell=5,7$ appeared by Watson in 1938 \cite{Watson}.  The case for $\ell=11$ was much more resistant, and a proof was not found before Atkin's work in 1967 \cite{Atkin}.

This result, together with the techniques used to prove it, has stimulated a massive amount of work in partition theory.  In particular, congruence families of a similar form have been found for a wide variety of more restrictive partition functions.  Notably, there exists an extraordinary range of results in terms of the difficulty of proof.  Some results are relatively unremarkable, and are proved regulary using the same methods that were used to prove (\ref{origconjecR5711}) for $\ell=5,7$.

On the other hand, others are much more ambitious, and resistant to proof even today.  For example, a congruence family for 2-colored Frobenius partitions modulo powers of 5 was proposed by Sellers in 1994 \cite{Sellers}, but it was not proved until the work of Paule and Radu in 2012 \cite{Paule}.

Until recently, it was assumed that the genus of the underlying modular curve was responsible for the complications.  For a given partition-like function $a(n)$, a typical infinite congruence family modulo powers of a given prime $\ell$ will have the form
\begin{align*}
a(n)\equiv 0\pmod{\ell^{\beta}},
\end{align*} in which $n$ is the inverse of some fixed positive integer modulo $\ell^{\alpha}$, and
\begin{align*}
\beta\rightarrow\infty\text{ as }\alpha\rightarrow\infty.
\end{align*}  Such a family is generally associated with a sequence of modular functions $\mathcal{L}:=\left( L_{\alpha} \right)_{\alpha\ge 1}$, in which each $L_{\alpha}$ effectively enumerates the congruence of $a(n)$ modulo $\ell^{\alpha}$.  This sequence of modular functions is in turn associated with a given compact Riemann surface $\mathrm{X}$, a classical modular curve.  The topological properties of this curve have a profound impact on the difficulty in proving the associated congruence family.

In particular, it was found that for (\ref{origconjecR5711}) with $\ell=5,7$, the associated modular curves have genus 0.  On the other hand, in the case of (\ref{origconjecR5711}) for $\ell=11$, and for the Andrews--Sellers congruences proved in \cite{Paule}, the associated modular curves have genus 1.  It was therefore assumed that our understanding of partition congruences when $\mathrm{X}$ has genus 0 was complete.  The discovery that classical techniques are not sufficient to prove all genus 0 congruences is quite recent.

Recent work done in \cite{Smoot2} and \cite{Smoot0} has shown that a second important topological property of $\mathrm{X}$ must be taken into account---namely, the cusp count, i.e., the number of points required to properly compactify $\mathrm{X}$.

When the cusp count is 2, the associated functions $L_{\alpha}$ generally have a pole at one cusp and a zero at the other.  The space of functions on $\mathrm{X}$ with a pole only at a single point is isomorphic to a $\mathbb{C}[x]$ module with rank depending on the genus of $\mathrm{X}$.  As a result, we can usually express $L_{\alpha}$ as a polynomial of a finite number of basis functions which have a pole at a single cusp.  Indeed, when the genus is 0, $L_{\alpha}$ is expressible in terms of a single function, i.e., a Hauptmodul.  This is the classical method used to prove nearly all congruence families in the past.

However, when the cusp count is greater than 2, the functions $L_{\alpha}$ may have poles at multiple places.  Thus, a polynomial expression of $L_{\alpha}$ in terms of the basis functions at a given cusp is generally unlikely.  The solution to this dilemma is to express $L_{\alpha}$ as a \textit{rational} polynomial in the basis functions, in which the denominator is governed by some multiplicative set $\mathcal{S}$.

For example, for $\alpha=1$ we have
\begin{align}
L_{1} &= \frac{(q^5;q^5)^{16}_{\infty}}{(q^{10};q^{10})^5_{\infty}} \sum_{n=0}^{\infty}d_5\left(5n + 4\right)q^{n+2}.\label{L1def}
\end{align}  For a more general definition of $L_{\alpha}$, see Section \ref{proofsetup}.  With the Hauptmodul
\begin{align}
x =& q\frac{(q^2;q^2)_{\infty}(q^{10};q^{10})^3_{\infty}}{(q;q)^3_{\infty}(q^{5};q^{5})_{\infty}}\label{xdefn}
\end{align} we have found that
\begin{align}
L_1 =& \frac{5}{(1+5x)^6}\cdot \bigg( 1141x^2+1368024x^3+406830425x^4+56096987730x^{5}+4584273042265x^{6}\nonumber\\
&+252183481030904x^{7}+10080168638009696x^{8}+61564636888325568\cdot 5^{1}x^{9}\nonumber\\
&+1487346172133137920\cdot 5^{1}x^{10}+5831395866875781120\cdot 5^{2}x^{11}\nonumber\\
&+18905742819857817600\cdot 5^{3}x^{12}+51418061000978841600\cdot 5^{4}x^{13}\nonumber\\
&+593053162941844684800\cdot 5^{4}x^{14}+1170186286551219830784\cdot 5^{5}x^{15}\nonumber\\
&+1987670950529108803584\cdot 5^{6}x^{16}+14600412448070041600000\cdot 5^{6}x^{17}\nonumber\\ &+18610929715750581043200\cdot 5^{7}x^{18}+20622155406031349350400\cdot 5^{8}x^{19}\nonumber\\
&+19875756837339796602880\cdot 5^{9}x^{20}
+83256789114237914972160\cdot 5^{9}x^{21}\nonumber\\ &+60509084676946699223040\cdot 5^{10}x^{22}
+38019095473186783887360\cdot 5^{11}x^{23}\nonumber\\ &+102731578880821716582400\cdot 5^{11}x^{24}
+47409173265052493414400\cdot 5^{12}x^{25}\nonumber\\
&+18500160767512490803200\cdot 5^{13}x^{26}
+6023180618118316687360\cdot 5^{14}x^{27}\nonumber\\ &+8030854479703808409600\cdot 5^{14}x^{28}
+1708690327684828364800\cdot 5^{15}x^{29}\nonumber\\ &+278969849417931161600\cdot 5^{16}x^{30}
+164099911422312448000\cdot 5^{16}x^{31}\nonumber\\
&+12384898975268864000\cdot 5^{17}x^{32}
+450359962737049600\cdot 5^{18}x^{33} \bigg).\label{L1defninx}
\end{align}  We see that divisibility by 5 is immediate.  Similar identities can be computed for any $L_{\alpha}$, albeit with a rapidly increasing polynomial degree and mean coefficient size.  We now give our main theorem, from which Theorem \ref{Thm12} immediately follows:
\begin{theorem}\label{Mythm}
Let \begin{align*}
\psi:=\psi(\alpha) &= \left\lfloor \frac{5^{\alpha+1}}{4} \right\rfloor + 1 - \mathrm{gcd}(\alpha,2),\\
\beta:=\beta(\alpha)&=\left\lfloor\alpha/2 \right\rfloor+1.
\end{align*}~Then for all $\alpha\ge 1$, we have \begin{align}
\frac{(1+5x)^{\psi}}{5^{\beta}}&\cdot L_{\alpha}\in\mathbb{Z}[x].\label{myeqna}
\end{align}
\end{theorem}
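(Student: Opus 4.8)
The plan is to prove Theorem \ref{Mythm} by strong induction on $\alpha$, following the general architecture of the localization method as in \cite{Smoot2}, but with modifications forced by the more complicated denominator structure $(1+5x)^{\psi(\alpha)-\beta}$ and the unusual growth rate of the $5$-adic valuation $\lfloor\alpha/2\rfloor+1$. First I would set up, in Section \ref{proofsetup}, the precise definition of the sequence $\left(L_{\alpha}\right)_{\alpha\ge 1}$ via the appropriate $U_5$-type operator acting on the $\alpha=1$ seed given in \eqref{L1def}, together with the modular-curve data: the underlying curve $\mathrm{X}$ for $\Gamma_0(10)$ (or the relevant subgroup), its cusps, the Hauptmodul $x$ in \eqref{xdefn}, and the multiplicative set $\mathcal{S}$ generated by $1+5x$ that controls denominators. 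I would establish the key localized function space: the $\mathbb{Z}$-module (or $\mathbb{Z}[x]$-module after clearing denominators) of modular functions holomorphic away from the two relevant cusps, with poles controlled by powers of $1+5x$, and pin down a finite basis. The essential structural input is a recurrence expressing $L_{\alpha+1}$ in terms of $L_{\alpha}$ (and possibly $L_{\alpha-1}$) through the action of $U_5$, made explicit by computing $U_5$ applied to $x^j/(1+5x)^i$ and re-expanding in the chosen basis.

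Next I would isolate the two quantitative ingredients that drive the induction. The first is a \emph{pole bound}: I must show that the denominator exponent needed for $L_{\alpha}$ grows exactly like $\psi(\alpha)-\beta(\alpha)$, which amounts to tracking how $U_5$ transforms the order of pole at the cusp where $1+5x$ vanishes. The second is the \emph{divisibility gain}: I must show that each application of the operator, combined with the built-in divisibility of the seed $L_1$ by $5$, yields an extra factor of $5$ roughly every two steps, matching $\lfloor\alpha/2\rfloor+1$. Concretely, I expect to prove a lemma of the form: if $\frac{(1+5x)^{\psi(\alpha)-\beta}}{5^{\lfloor\alpha/2\rfloor+1}}L_{\alpha}\in\mathbb{Z}[x]$ (and similarly for $\alpha-1$), then the same holds for $\alpha+1$, by expanding $U_5$ of the inductive hypothesis in the basis and checking that the resulting integer coefficients carry the claimed power of $5$. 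This is where the ``striking internal algebraic structure'' advertised in the abstract must be used: a naive bound on the $5$-adic valuation of the transition matrix entries is too weak, so I anticipate needing to exhibit a finer invariant — perhaps a congruence among the basis coefficients, or a factorization of the transition operator into a piece that manifestly contributes $5$ and a piece that is unimodular modulo $5$ — which forces the gain to be exactly $\lfloor\alpha/2\rfloor+1$ rather than something smaller.

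The main obstacle, I expect, is precisely this last point: proving the sharp $5$-adic lower bound on the valuation of $L_{\alpha}$ after clearing the denominator. The classical genus-$0$, two-cusp setup would give a clean polynomial recursion in the Hauptmodul with a uniform divisibility gain of one power of $\ell$ per step; here the gain is only one power of $5$ per \emph{two} steps, which signals that the relevant operator has a genuine two-step periodicity modulo $5$. Making this rigorous will require either (i) passing to a $2\times 2$ block form of the recurrence and diagonalizing/triangularizing it modulo powers of $5$, or (ii) finding an explicit auxiliary sequence of modular functions $\left(M_{\alpha}\right)$ interpolating the $L_{\alpha}$ at half-integer steps, for which a cleaner one-step recurrence with the expected divisibility holds. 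I would also need to verify the base cases $\alpha=1$ (directly from \eqref{L1defninx}, where divisibility by $5$ and the denominator $(1+5x)^6$ with $\psi(1)-\beta(1)=6$ are both visible) and $\alpha=2$ by an explicit, if lengthy, computation, so that the two-step induction has enough initial data. Finally, once Theorem \ref{Mythm} is established, Theorem \ref{Thm12} follows immediately: clearing $(1+5x)^{\psi-\beta}$ affects only the cusp behavior and not the $5$-divisibility of the coefficients of $L_{\alpha}$, so $5^{\lfloor\alpha/2\rfloor+1}\mid d_5(n)$ whenever $4n\equiv 1\pmod{5^{\alpha}}$, via the defining relation between $L_{\alpha}$ and the generating function $D_5(q)$.
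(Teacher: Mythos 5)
Your overall architecture matches the paper's: induction on $\alpha$, the localized ring $\mathbb{Z}[x]_{\mathcal{S}}$ with denominators powers of $1+5x$, modular-equation-driven recurrences for $U_5\bigl(x^m/(1+5x)^n\bigr)$, a pole-order bookkeeping that produces $\psi(\alpha)-\beta$, and the recognition that the gain of one power of $5$ per \emph{two} steps is the crux. But the proposal leaves that crux as a placeholder, and as formulated your inductive lemma would not close. You propose to assume only that $\frac{(1+5x)^{\psi(\alpha)-\beta}}{5^{\lfloor\alpha/2\rfloor+1}}L_{\alpha}\in\mathbb{Z}[x]$ (plus the analogous statement for $\alpha-1$) and deduce the same for $\alpha+1$; this hypothesis is too weak. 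The paper must strengthen the induction hypothesis to membership in spaces $\mathcal{V}^{(i)}_n$ (Section \ref{algebrastructure}) carrying two extra pieces of data: coefficientwise $5$-adic lower bounds $\theta_i(m)$ that grow with $m$, and, for the odd-indexed terms, the condition that the coefficient vector $(s(m))_{m\ge 2}$ lies in $\ker(\Omega)$ for an explicit surjection $\Omega$ onto $(\mathbb{Z}/5\mathbb{Z})^2$. The reason is concrete: in Theorem \ref{gofrom1to0}, when $U^{(1)}$ is applied, the valuation estimates $\theta_1(m)+\pi_1(m,r)$ fall short of the target by one power of $5$ exactly at the output powers $x^1$ and $x^2$ (for $2\le m\le 5$, resp.\ $4\le m\le 8$), and the deficit is repaired only because $\sum_m s(m)h_1(m,n,r)\equiv 0\pmod 5$ there --- which, via the explicit congruences of Corollary \ref{congcor1b}, is precisely the pair of linear conditions defining $\ker(\Omega)$. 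A guess that ``perhaps a congruence among the basis coefficients'' is needed points in the right direction, but you would still have to identify these two specific congruences, and that in turn requires Theorem \ref{congcor1a} ($h_i(m,n,r)\equiv h_i(m,n-5,r)\pmod 5$) so that the relevant residues are independent of the unbounded parameter $n$ and can be pinned down by finitely many computations.

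The second missing ingredient is \emph{stability}: even granting the kernel trick for one application of $U^{(1)}$, the induction only continues if the composite $\frac15 U^{(0)}\circ U^{(1)}$ carries $\ker(\Omega)$-vectors back to $\ker(\Omega)$-vectors (Theorem \ref{oddbacktoodd}). This is not automatic and is not addressed by either of your proposed devices (a $2\times 2$ block triangularization mod $5$, or half-step auxiliary functions $M_\alpha$); the paper proves it by reducing, again via Theorem \ref{congcor1a} and the hypothesis $n\equiv 1\pmod 5$, to an explicit finite computation of the images $\hat t(w)$ for $2\le w\le 8$ as integer linear forms in $s(2),\dots,s(8)$, followed by reduction modulo the ideal generated by the two kernel forms. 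Relatedly, no base case at $\alpha=2$ is needed: the engine is the one-step map $\mathcal{V}^{(0)}_n\to\hat{\mathcal{V}}_{5n+6}$ (Theorem \ref{v02v1}) together with the two-step stability theorem applied to odd indices, seeded only by $\frac15 L_1\in\mathcal{V}^{(1)}_6$ with coefficient vector in $\ker(\Omega)$. So the gap is not the scaffolding, which you have right, but the actual identification of $\Omega$, the $n$-stability of the $h_i$ modulo $5$, and the closure of $\ker(\Omega)$ under the two-step operator; without these the claimed $5^{\lfloor\alpha/2\rfloor+1}$ divisibility cannot be propagated.
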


This theorem is proved by induction.  What has surprised us is that in order to properly complete the induction, we had to take advantage of a striking internal structure attached to the coefficients of $x^n$ in the numerator of the expressions for $L_{\alpha}$.  This structure is defined by the fact that each $L_{\alpha}/5^{\left\lfloor\alpha/2\right\rfloor+1}$ is effectively a member of the kernel of a homomorphism $\Omega$ onto a finite-dimensional $\mathbb{F}_5$-vector space (see Section \ref{algebrastructure}).  In retrospect, this was true in the previous applications of localization in \cite{Smoot2} and \cite{Smoot0}.  Indeed, we recognize this property in all classical cases of partition congruences, including those of $p(n)$.  However, in the cases of \cite{Smoot2} and \cite{Smoot0}, the homomorphism was very simple, and it was only interpreted at the time as a minor idiosyncrasy on the coefficients of the corresponding $L_{\alpha}$.  In the classical cases, e.g., those of $p(n)$, the homomorphism is completely trivial.

We now hypothesize that this kernel structure plays a critical role in the theory of congruence families.  Certainly, it allows a proof to be completed in a relatively straightforward manner, and we are extremely interested to know if this algebraic structure persists---or fails---in other examples.
\subsection{Additional Congruence Results}\label{additionalresultsgiven}
Theorem \ref{Thm12} constitutes our central result, and we believe that our methods used to prove it will prove extremely useful in future work.  However, in searching for Theorem \ref{Thm12} and any other congruence families of similar form, we have also discovered many other interesting congruence results by various powers of 5.  

As such, we include an examination of divisibility properties of $d_k(n)$ by powers of 5 for various values of $k$ and $n$.  These results subsume a significant amount of work done in \cite{dasilvaet}, \cite{Baruahet}.  We have since proved these results using Ramanujan's theta functions and their dissections, and we provide them here for some additional context on the behavior of $d_k(n)$ with respect to progressions with powers of 5.
\begin{theorem}\label{2A} For $k \geq 0$,
	\begin{equation}\label{2Aeqn1}
	d_{25k+1}(25n+23) \equiv 0\pmod{5},
	\end{equation}
	and
	\begin{equation}\label{2Aeqn2}
	d_{75k+16}(25n+8) \equiv 0\pmod{5}.
	\end{equation}
\end{theorem}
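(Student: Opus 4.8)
The plan is to prove both congruences \eqref{2Aeqn1} and \eqref{2Aeqn2} by the elementary method based on Ramanujan's theta functions and their $5$-dissections, exactly as in the approach of da Silva--Hirschhorn--Sellers and Baruah et al. First I would observe that $d_k(n)$ depends on $k$ only through $q$-series raised to linear-in-$k$ exponents: from \eqref{D2} we have $D_k(q) = \big((q^2;q^2)_\infty^{\,} / (q;q)_\infty^{\,3}\big)^k \cdot (q;q)_\infty^{-1}$, so $D_{k_0 + \ell j}(q) = D_{k_0}(q)\cdot\big((q^2;q^2)_\infty/(q;q)_\infty^3\big)^{\ell j}$. The key point is that the second factor is, modulo $5$, a power of an eta-quotient whose $q$-expansion has integer exponents behaving nicely under $5$-dissection; in particular, for the relevant modulus one can show that the ``increment'' series is $\equiv 1$ in the arithmetic progression being isolated, so that the congruence for $d_{k_0}$ in that progression propagates to all $d_{k_0 + \ell j}$. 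Concretely, for \eqref{2Aeqn1} one takes $k_0 = 1$, $\ell = 25$, and for \eqref{2Aeqn2} one takes $k_0 = 16$, $\ell = 75$.

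Next I would carry out the actual dissections. For \eqref{2Aeqn1}: starting from $D_1(q) = (q^2;q^2)_\infty/(q;q)_\infty^4$, I would first extract the subprogression $5n+r$ by using the standard $5$-dissections of $(q;q)_\infty$ (via Jacobi's triple product / the quintuple product) and of $1/(q;q)_\infty$ (via the well-known $5$-dissection underlying Ramanujan's proof of \eqref{RamOrig5np4}). Iterating the dissection once more to reach modulus $25$, I would identify the component series sitting at residue $23 \bmod 25$ and check that every coefficient is divisible by $5$; this is the computational heart and is essentially a finite verification once the dissection formulas are in place. The same scheme, with $D_{16}(q)$ and modulus $25$ targeting residue $8$, handles \eqref{2Aeqn2}; here one uses $D_{16}(q) \equiv D_1(q)\cdot\big((q^2;q^2)_\infty/(q;q)_\infty^3\big)^{15} \pmod{5}$, and $\big((q;q)_\infty\big)^{-45}$ and $(q^2;q^2)_\infty^{15}$ can both be rewritten modulo $5$ using $(q;q)_\infty^5 \equiv (q^5;q^5)_\infty$ and $(q^2;q^2)_\infty^5 \equiv (q^{10};q^{10})_\infty \pmod 5$, which collapses the exponents and makes the dissection tractable.

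The main obstacle I anticipate is bookkeeping rather than conceptual: the $5$-dissection of $1/(q;q)_\infty^4$ (and of the higher powers appearing after reducing mod $5$) produces several component eta-quotients, and one must carefully track which components land in the target residue class after the second dissection, keeping the powers of the auxiliary function $R(q) = (q;q^5)_\infty(q^4;q^5)_\infty / \big((q^2;q^5)_\infty(q^3;q^5)_\infty\big)$ under control. A clean way to organize this is to work throughout modulo $5$ and exploit $\binom{5}{j}\equiv 0$ for $0<j<5$ to kill cross terms, reducing each dissection step to a short list of surviving terms. Once the surviving terms in the $23 \bmod 25$ (resp. $8 \bmod 25$) slot are listed, divisibility by $5$ is immediate from an explicit common factor of $5$ in each, completing the proof. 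The independence from $k$ then follows because the ``$k$-increment'' factor contributes, in the relevant slot and modulo $5$, only a unit.
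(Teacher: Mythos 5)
Your plan follows essentially the same route as the paper's proof: reduce modulo $5$ via $(q;q)_\infty^5\equiv(q^5;q^5)_\infty$, carry out two successive $5$-dissections with the standard dissection lemmas for $(q;q)_\infty$ and $1/(q;q)_\infty$, and transfer the congruence in $k$ because the increment factor $\bigl((q^2;q^2)_\infty/(q;q)_\infty^3\bigr)^{25j}$ (resp.\ $75j$) collapses modulo $5$ to a series in $q^{25}$, so it cannot mix residue classes modulo $25$. The one detail worth noting is that for the $75k+16$ family the surviving factor of $5$ comes from Jacobi's identity applied to $(q^2;q^2)_\infty^3$ at exponents $\equiv 1\pmod 5$, rather than from the $5q^4$ term in the dissection of $1/(q;q)_\infty$ as in the $25k+1$ case, but this is squarely within the toolkit you invoke.
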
  
For $k=0$ in (\ref{2Aeqn1}), we have the following result.
\begin{corollary}\cite[Eqn. (6.1), Thm. 6.1]{Baruahet}\label{elongcongcor1}
$$d_{1}(25n+23) \equiv 0\pmod{5}.$$	
\end{corollary}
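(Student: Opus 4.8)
The plan is to obtain this corollary directly from Theorem~\ref{2A}, of which it is the special case $k=0$. First I would specialize the congruence \eqref{2Aeqn1}, namely $d_{25k+1}(25n+23)\equiv 0\pmod{5}$, to $k=0$; this yields precisely $d_1(25n+23)\equiv 0\pmod{5}$ for all $n\ge 0$, which is the asserted statement and which recovers \cite[Eqn.~(6.1), Thm.~6.1]{Baruahet}. Thus no independent argument is required once Theorem~\ref{2A} has been established: all of the substantive work lies in the proof of that theorem, and the corollary is merely its $k=0$ reading.

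For a self-contained verification that does not pass through the general $k$-family, one may instead argue with Ramanujan's theta functions and their $5$-dissections, in the spirit of \cite{dasilvaet} and \cite{Baruahet}. Starting from \eqref{D2} with $k=1$ one has $D_1(q)=(q;q)_{\infty}(q^2;q^2)_{\infty}/(q;q)_{\infty}^{5}$; reducing modulo $5$ via $(q;q)_{\infty}^{5}\equiv(q^5;q^5)_{\infty}\pmod{5}$ and using the identity $\varphi(-q)\psi(q)=(q;q)_{\infty}(q^2;q^2)_{\infty}$ gives $D_1(q)\equiv\varphi(-q)\psi(q)/(q^5;q^5)_{\infty}\pmod{5}$. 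Since $1/(q^5;q^5)_{\infty}$ contributes only powers of $q^5$, it then suffices to $5$-dissect $\varphi(-q)$ and $\psi(q)$, multiply the dissections together, isolate the component supported on exponents $\equiv 3\pmod 5$, dissect once more, and verify that the resulting coefficients along the progression $25n+23$ are divisible by $5$. The only delicate point in this route is the bookkeeping of the iterated dissection; there is no conceptual obstacle, and this is precisely the flavor of computation that underlies Theorem~\ref{2A} in the first place.
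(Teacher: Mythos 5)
Your first paragraph is exactly the paper's argument: the corollary is simply the $k=0$ specialization of \eqref{2Aeqn1}, so once Theorem~\ref{2A} is in hand nothing further is needed. Your alternative sketch in the second paragraph essentially retraces the paper's own proof of Theorem~\ref{2A} (note the factor of $5$ there ultimately comes from the $5q^4$ term in the $5$-dissection of $1/(q;q)_{\infty}$ in Lemma~\ref{lemma2}, not from the theta dissections themselves), so it is consistent with, but redundant to, the paper's route.
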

The following theorem provides an elegant intricate connection between the two parameters; the index of the function $k$, and the initial of the progression $\ell$ considered in the context:
\begin{theorem}\label{1}
	If $k \in \{1,3,5,8,10,13,15,16,18,20,23\}$, then for any non-negative integer $n$:
	\begin{equation}\label{eq1}
	d_k(25n+\ell) \equiv 0\pmod{5};
	\end{equation}
	and for $k \in \{5,8,10\}$,
	\begin{equation}\label{eq2}
	d_k(25n+\ell) \equiv 0\pmod{25},
	\end{equation}
	where $k+\ell=24$.
\end{theorem}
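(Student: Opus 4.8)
The plan is to establish Theorem \ref{1} by $5$-dissection of the generating function \eqref{D2}, in the manner of Ramanujan's theta-function identities; the relation $k+\ell=24$ will be used to pin down the target residue class of $\ell$ and to steer the progression $25n+\ell$ onto a distinguished component of the dissection. First I would carry out the standard reduction: writing $k=5a+r$ and $3k+1=5b+s$ with $0\le r,s\le 4$, the congruence $(1-q^m)^5\equiv 1-q^{5m}\pmod 5$ gives
\begin{align*}
D_k(q)\equiv \frac{(q^{10};q^{10})_\infty^{a}}{(q^5;q^5)_\infty^{b}}\cdot\frac{(q^2;q^2)_\infty^{r}}{(q;q)_\infty^{s}}\pmod 5 ,
\end{align*}
where the first factor is a power series in $q^5$ and the second is a quotient of Euler products of bounded exponent, depending only on $k\bmod 5$. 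For the values of $k$ in the list one checks that $k\bmod 5\in\{0,1,3\}$ and, correspondingly, that $\ell\bmod 5\in\{4,3,1\}$; I would handle these three cases separately.

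For $k\equiv 3\pmod 5$ (so $r=3$, $s=0$, $\ell\equiv 1\pmod 5$) the bounded factor is $(q^2;q^2)_\infty^3$. By Jacobi's identity $(q;q)_\infty^3=\sum_{n\ge 0}(-1)^n(2n+1)q^{n(n+1)/2}$; the terms with $n\equiv 2\pmod 5$ drop out modulo $5$, and the surviving triangular-number exponents lie only in the residue classes $0,1\pmod 5$. Hence $(q^2;q^2)_\infty^3$, and therefore $D_k(q)$, is supported modulo $5$ only on exponents $\equiv 0,2\pmod 5$, so the coefficient of $q^{25n+\ell}$ vanishes modulo $5$; this settles \eqref{eq1} for $k\in\{3,8,13,18,23\}$, in fact for every $n$ with $25n+\ell\equiv 1\pmod 5$. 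For $k\equiv 0\pmod 5$ (so $r=0$, $s=1$, $\ell\equiv 4\pmod 5$) the bounded factor is $1/(q;q)_\infty=\sum_{n\ge 0}p(n)q^n$, and the coefficient of $q^{25n+\ell}$ is a $\mathbb Z$-linear combination of values $p(5t+4)$, all of which are divisible by $5$ by \eqref{RamOrig5np4}; this settles \eqref{eq1} for $k\in\{5,10,15,20\}$. For $k\equiv 1\pmod 5$ (so $r=1$, $s=4$, $\ell\equiv 3\pmod 5$) I would rewrite the bounded factor, using $(q;q)_\infty^5\equiv(q^5;q^5)_\infty\pmod 5$, as $(q;q)_\infty(q^2;q^2)_\infty/(q^5;q^5)_\infty\equiv\varphi(-q)\psi(q)/(q^5;q^5)_\infty\pmod 5$, and then apply the $5$-dissections of $\varphi(-q)=\sum(-1)^nq^{n^2}$ and $\psi(q)=\sum_{n\ge 0}q^{n(n+1)/2}$: a short residue computation shows that the only way a sum $n^2+n'(n'+1)/2$ is $\equiv 3\pmod 5$ is with $5\mid n$ and $n'\equiv 2\pmod 5$, so the component of $\varphi(-q)\psi(q)$ on exponents $\equiv 3\pmod 5$ equals $q^3\varphi(-q^{25})\psi(q^{25})$. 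Substituting this back and replacing $q^5$ by a fresh variable, \eqref{eq1} for $k=1$ reduces to $p(5t+4)\equiv 0\pmod 5$, and for $k=16$ to the assertion that $d_3(m)\equiv 0\pmod 5$ whenever $m\equiv 1\pmod 5$, already proved above --- so the family partly \emph{feeds on itself}.

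For the congruences \eqref{eq2} modulo $25$, the case $k=5$ is precisely the special case $\alpha=2$ of Theorem \ref{Thm12} (since $4\cdot 19\equiv 1\pmod{25}$ and $\lfloor 2/2\rfloor+1=2$), so only $k=8,10$ remain. For these I would run the same scheme with the sharper reduction $(1-q^m)^{25}\equiv(1-q^{5m})^5\pmod{25}$: for $k=8$ one has $3k+1=25$ exactly, so $D_8(q)\equiv (q^2;q^2)_\infty^8/(q^5;q^5)_\infty^5\pmod{25}$, while for $k=10$ one gets $D_{10}(q)\equiv (q^2;q^2)_\infty^{10}/\big((q;q)_\infty^6(q^5;q^5)_\infty^5\big)\pmod{25}$. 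In each case the coefficient of $q^{25n+\ell}$ reduces, after absorbing the $q^5$-series prefactors, to a coefficient in the $5$-dissection of a quotient of Euler products of small exponent; I would use the known dissections of these products together with the splitting $(q;q)_\infty^5=(q^5;q^5)_\infty+5H(q)$ and Ramanujan's identity $\sum_{n\ge 0}p(5n+4)q^n=5(q^5;q^5)_\infty^5/(q;q)_\infty^6$ to exhibit the required extra factor of $5$ on the progression $25n+\ell$, via one further $5$-dissection.

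The algebra of the reductions and the verification that each dissection component lands in the claimed residue class is routine; the real obstacle is the mod-$25$ cases $k\in\{8,10\}$ of \eqref{eq2}, where vanishing modulo $5$ no longer suffices and one must follow the dissection coefficients to a second $5$-adic digit --- this is exactly the step that forces the extra layer of dissection and genuinely uses the precise shape $25n+\ell$ of the progression, equivalently the relation $k+\ell=24$.
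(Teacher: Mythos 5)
Your treatment of \eqref{eq1} is correct and, as far as the mod-$5$ statement goes, complete: the reduction of $D_k(q)$ modulo $5$ to a $q^5$-series times a bounded Euler-product factor, the support computation for $(q^2;q^2)_\infty^3$ on exponents $\equiv 0,2\pmod 5$ via Jacobi's identity, the reduction of the $k\equiv 0\pmod 5$ cases to $p(5t+4)\equiv 0\pmod 5$, and the extraction of the $\equiv 3\pmod 5$ component of $(q;q)_\infty(q^2;q^2)_\infty=\phi(-q)\psi(q)$ as $q^3\phi(-q^{25})\psi(q^{25})$ for $k\equiv 1\pmod 5$ all check out, and the residue bookkeeping $k\bmod 5\in\{0,1,3\}$, $\ell\bmod 5\in\{4,3,1\}$ is right. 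This is a genuinely different route from the paper, which disposes of \eqref{eq1} by citation (Corollaries 14 and 15 of \cite{AndrewsPaule}, Corollary 4.5 of \cite{dasilvaet}, and Theorem \ref{2A} for $k=1,16$); your version is self-contained and even proves more (e.g.\ $d_k(m)\equiv 0\pmod 5$ for all $m\equiv 4\pmod 5$ when $5\mid k$). Your appeal to Theorem \ref{Thm12} with $\alpha=2$ for the $k=5$ case of \eqref{eq2} is also legitimate and non-circular, since the proof of Theorem \ref{Thm12} nowhere uses Theorem \ref{1}.

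The genuine gap is \eqref{eq2} for $k\in\{8,10\}$, which you flag as ``the real obstacle'' but leave as a plan rather than a proof. Concretely, for $k=10$ the natural coefficientwise criterion suggested by your reduction $D_{10}\equiv (q^2;q^2)_\infty^{10}/\big((q;q)_\infty^6(q^5;q^5)_\infty^5\big)\pmod{25}$ --- namely that $[q^{5t+4}]\big((q^2;q^2)_\infty^{10}/(q;q)_\infty^6\big)\equiv 0\pmod{25}$ --- is simply false: the coefficient of $q^4$ in that quotient is $315-10\cdot 27+35=80$ and the coefficient of $q^9$ is $1120$, each divisible by $5$ but not by $25$. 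So the claimed congruence can only emerge after writing the $\equiv 4\pmod 5$ component of this quotient as $5$ times an explicit series and performing a further dissection against $1/(q^5;q^5)_\infty^5\equiv 1/(q^{25};q^{25})_\infty\pmod 5$; that explicit identity is precisely what is missing, and the tools you list (the splitting $(q;q)_\infty^5=(q^5;q^5)_\infty+5H(q)$ and Ramanujan's $p(5n+4)$ identity) do not visibly produce it. For $k=8$ the required input is $[q^{5t+3}](q;q)_\infty^8\equiv 0\pmod{25}$ --- which is in fact true (even modulo $125$, e.g.\ via the Hecke/CM structure of $\eta(3\tau)^8$, or by raising the dissection of Lemma \ref{lemma1} to the eighth power and using identities for powers of $T(q^5)$) --- but no argument for it is given. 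Note that the paper itself settles \eqref{eq2} by an algorithmic verification with the RaduRK package \cite{Smoot1}; until you either carry out the second-digit dissection identities or supply such a certificate, the mod-$25$ cases $k\in\{8,10\}$ remain unproved in your write-up.
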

\begin{remark}\label{Newremark}
In Theorem \ref{1}, we have seen that the ``index" parameter $k$ and the ``orbit" parameter $\ell$ are connected by the relation $k+\ell=24$. But this type of phenomena was first observed by Ahmed, Baruah, and Dastidar \cite[Theorem 1.1]{ABD} in their work on congruences for $2$-color partitions. But yet, as it seems, there is no explanation behind why the sum of such two parameter is always $24$ which might be a worthwhile problem to work out for.  
\end{remark}
Finally, we give an interesting infinite family, in which $k$ varies by a linear progression (with $5^3$ as the base), and in which $n$ varies modulo arbitrarily high odd powers of 5, but in which the congruence itself is fixed modulo 5.  
\begin{theorem}\label{3}
	For $n, k \geq 0$, $\alpha \geq 1$ and $j \in \{1,2\}$,
	\begin{equation}\label{3eqn1}
	d_{125k+2}\Bigl(5^{2\alpha+1}n+5^{2\alpha}j+\dfrac{23\times 5^{2\alpha}+1}{8}\Bigr) \equiv 0\pmod{5}.
	\end{equation}
	Precisely,
	\begin{equation}\label{3eqn2}
	d_{125k+2}\Bigl(5^{2\alpha+1}n-\dfrac{1}{8}(5^{2\alpha}-1)+4\times 5^{2\alpha}\Bigr)\equiv d_{125k+2}\Bigl(5^{2\alpha+1}n-\dfrac{1}{8}(5^{2\alpha}-1)+5\times 5^{2\alpha}\Bigr)\equiv 0\pmod{5}.
	\end{equation}
\end{theorem}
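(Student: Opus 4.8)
The plan is to prove Theorem \ref{3} by reducing the $5^{2\alpha+1}$-progressions for $d_{125k+2}(n)$ to a finite verification combined with the machinery underlying Theorem \ref{Mythm}. First I would observe that, by \eqref{D2}, shifting $k$ by a multiple of $125$ multiplies $D_k(q)$ by $\left( (q^2;q^2)_{\infty}^{125}/(q;q)_{\infty}^{375} \right)^{k/125}$, whose exponents are all divisible by $5^3$; hence $d_{125k+2}(n)\equiv d_2(n)\pmod{5}$ for every $n$ and every $k\ge 0$ (indeed modulo $5^3$, but we only need mod $5$ here). This collapses \eqref{3eqn1} and \eqref{3eqn2} to the single statement about $d_2$, so it suffices to show $d_2\!\left(5^{2\alpha+1}n+5^{2\alpha}j+\tfrac{23\cdot 5^{2\alpha}+1}{8}\right)\equiv 0\pmod 5$ for $j\in\{1,2\}$ and all $\alpha\ge 1$.

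Next I would set up the modular-functions framework exactly as in Section \ref{proofsetup} but with $d_2$ in place of $d_5$: define a sequence of functions $\widetilde{L}_{\alpha}$ on the same modular curve (the relevant group is again $\Gamma_0(10)$-type since $D_2(q)$ involves only $q,q^2$) that enumerate $d_2$ along the arithmetic progression $8n\equiv -1\pmod{5^{2\alpha}}$, noting that $\tfrac{23\cdot 5^{2\alpha}+1}{8}$ is precisely the residue making $8n\equiv -1$. The key structural point is that $\widetilde L_{\alpha}$ lies in the same localized module $\mathbb{Z}[x]_{(1+5x)}$ used in Theorem \ref{Mythm}, and the localization/induction argument there — in particular the action of the $U_5$-type operator sending $\widetilde L_{\alpha}\to\widetilde L_{\alpha+1}$ together with the kernel-of-$\Omega$ structure from Section \ref{algebrastructure} — carries over verbatim, because nothing in that argument used $k=5$ beyond the shape of the generating function. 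What is new and special to the $d_2$ case is that one picks out the two sub-progressions $j\in\{1,2\}$ inside the level-$5^{2\alpha}$ progression; I would show these correspond to extracting specific coefficients in the $x$-expansion of $\widetilde L_{\alpha}$ (the terms surviving a further $5$-dissection), and that the induction forces those extracted pieces to be $\equiv 0\pmod 5$ for all $\alpha$, which is exactly \eqref{3eqn1}.

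Concretely the steps in order are: (1) prove the mod-$5$ reduction $d_{125k+2}\equiv d_2$ via \eqref{D2}; (2) translate $\tfrac{23\cdot 5^{2\alpha}+1}{8}$ and the $+5^{2\alpha}j$ shift into the congruence condition $8n\equiv -1+8j\cdot 5^{2\alpha}\equiv -1\pmod{5^{2\alpha}}$ together with a prescribed residue mod $5^{2\alpha+1}$, thereby identifying \eqref{3eqn1} with a statement about a $5$-dissection of $\widetilde L_{\alpha}$; (3) establish the analogue of \eqref{myeqna}, namely $(1+5x)^{\widetilde\psi(\alpha)}\widetilde L_{\alpha}/5\in\mathbb{Z}[x]$ — here I only need divisibility by the \emph{first} power of $5$ at every odd level, which is much weaker than Theorem \ref{Mythm} and should follow from the base case $\alpha=1$ plus one pass of the $U_5$ recursion; (4) verify the base case $\alpha=1$ (i.e.\ $d_2(125n+r)\equiv 0\pmod 5$ for the relevant two residues $r$) by direct computation with Ramanujan's theta-function dissections, in the same spirit as the proofs sketched for Theorems \ref{2A} and \ref{1}; and (5) conclude by induction. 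The restatement \eqref{3eqn2} is then immediate since $4\cdot 5^{2\alpha}$ and $5\cdot 5^{2\alpha}$ are just $j=4\cdot 5^{2\alpha}/5^{2\alpha}$... more precisely $-\tfrac18(5^{2\alpha}-1)+4\cdot 5^{2\alpha}=\tfrac{23\cdot 5^{2\alpha}+1}{8}+5^{2\alpha}$ and similarly with $5\cdot 5^{2\alpha}$ giving the $j=2$ shift, so \eqref{3eqn2} is a rewriting of \eqref{3eqn1}.

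The main obstacle will be step (3)–(4): verifying that the base-level congruence actually holds for \emph{both} residues $j\in\{1,2\}$ and that the $U_5$ recursion propagates a single factor of $5$ at every odd $\alpha$ while possibly losing it at even $\alpha$ — the parity-dependent behavior ($2\alpha+1$ vs.\ $2\alpha$) must be tracked carefully, and it is conceivable that the clean propagation only works after absorbing the right power of $(1+5x)$ and invoking the kernel structure $\Omega$ rather than naive $5$-adic estimates. I expect the theta-function dissection needed for the base case to be the most computation-heavy part, but it is routine in principle given the tools already deployed for Theorems \ref{2A} and \ref{1}.
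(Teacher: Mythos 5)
There are two genuine gaps, and both concern the load-bearing steps of your plan. First, your step (1) is false: divisibility of the exponents of $\bigl((q^2;q^2)_{\infty}^{125}/(q;q)_{\infty}^{375}\bigr)^{k}$ by $5^3$ does not make that factor $\equiv 1\pmod 5$; using $(q;q)_{\infty}^5\equiv(q^5;q^5)_{\infty}\pmod 5$ it is congruent to $(q^{250};q^{250})_{\infty}^{k}/(q^{125};q^{125})_{\infty}^{3k}\pmod 5$, a nontrivial series in $q^{125}$ whose coefficient of $q^{125}$ is $3k$. Hence $d_{125k+2}(125)\equiv d_2(125)+3k\pmod 5$, so $d_{125k+2}(n)\equiv d_2(n)\pmod 5$ fails for $5\nmid k$ (and certainly modulo $5^3$). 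The theorem therefore does not ``collapse'' to a statement about $d_2$ coefficientwise; the paper instead keeps the companion factor explicit in the generating-function congruence \eqref{thm3eqn10} and transfers the congruence at the level of series, exploiting that the factor reduces mod $5$ to a series in $q^5$ so that it passes through the $5$-dissections. Your ``hence'' skips exactly the bookkeeping that makes the passage from $k=0$ to general $k$ legitimate. (Your last observation, that \eqref{3eqn2} is a rewriting of \eqref{3eqn1}, is correct.)

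Second, the engine you propose for the $d_2$ part cannot work, because the divisibility you want to propagate does not exist. The paper's proof shows (by the elementary theta-dissection tools of Lemmas \ref{lemma0}--\ref{lemma3}, Jacobi's identity \eqref{Jacobi} and the dissections \eqref{dissect1}, \eqref{dissect2}, together with Yao's lemma) the self-replicating identity \eqref{thm3eqn7}: the generating function of $d_2$ along the natural tower $8\lambda\equiv-1\pmod{5^{2\alpha}}$ is $\equiv 2f(-q)\psi(q^5)f(-q^5)\pmod 5$, which is \emph{not} $\equiv 0$. So an analogue of \eqref{myeqna} asserting $\widetilde L_{\alpha}/5\in\mathbb{Z}[x]_{\mathcal S}$ at odd levels is simply false for $d_2$; there is no factor of $5$ for the $U_5$ recursion to propagate, and no kernel-of-$\Omega$ structure is relevant here (nor would the machinery of Sections \ref{proofsetup}--\ref{initialrelationssection} ``carry over verbatim'': $\mathcal A$, the modular equations, $\Omega$ and the exponent bounds are all computed specifically for $d_5$). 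The vanishing in Theorem \ref{3} occurs only on two of the five sub-progressions at each level, namely those indexed by $3$ and $4$ in the final $5$-dissection, and it comes from the fact that $f(-q)$ has no terms $q^{5n+3}$ or $q^{5n+4}$ (a consequence of \eqref{dissect1}), applied to the replicated identity \eqref{thm3eqn7}. That self-replication is the key missing idea in your proposal; the base-case computation you envisage in step (4) matches the paper's $\alpha=1$ work, but without \eqref{thm3eqn7} (or something equivalent) your induction over $\alpha$ has nothing to run on, and with it the localization apparatus is unnecessary.
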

Plugging $k=0$ and $\alpha=1$ into (\ref{3eqn2}), we get:
\begin{corollary}\cite[Eqn. (6.3), Thm. 6.1]{Baruahet}
	For $k=0$ and $\alpha=1$,
	\begin{equation*}
	\begin{split}
	d_2(125n+97) &\equiv 0\pmod{5},\\
	d_2(125n+122) &\equiv 0\pmod{5}.\\
	\end{split}
	\end{equation*}
\end{corollary}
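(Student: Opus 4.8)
The plan is to strip the parameter $k$ away modulo $5$ by a ``freshman's dream'' argument, reducing to a statement about $d_2$ alone, and then to settle that statement by theta-function dissection. Since $(1-q^m)^5\equiv 1-q^{5m}\pmod{5}$, iterating twice gives $(q;q)_\infty^{125}\equiv(q^{125};q^{125})_\infty\pmod{5}$, so by \eqref{D2}
\[
D_{125k+2}(q)=\prod_{m\ge 1}\frac{(1-q^{2m})^{125k+2}}{(1-q^m)^{375k+7}}\equiv\left(\frac{(q^{250};q^{250})_\infty}{(q^{125};q^{125})_\infty^{3}}\right)^{\!k}D_2(q)\pmod{5}.
\]
The first factor is a power series in $q^{125}$, and because $\alpha\ge 1$ forces $125\mid 5^{2\alpha+1}$, convolving with it cannot move a coefficient out of its residue class modulo $125$. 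A short computation identifies the arguments $5^{2\alpha+1}n+5^{2\alpha}j+\tfrac{1}{8}(23\cdot 5^{2\alpha}+1)$ of \eqref{3eqn1}, reduced modulo $125$, as exactly the three residues $47$, $97$, $122$ (namely $8^{-1}\bmod 125$ when $\alpha\ge 2$, and $72+25j$ when $\alpha=1$). Hence Theorem~\ref{3} reduces to
\[
d_2(125n+47)\equiv d_2(125n+97)\equiv d_2(125n+122)\equiv 0\pmod{5}\qquad(n\ge 0),
\]
the cases $k\ge 1$ then following by convolution with the $q^{125}$-series, and the congruences for the residues $97$, $122$ being those of~\cite{Baruahet} (the case $k=0$, $\alpha=1$).

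\textbf{Dissection for $d_2$.} Using $(q;q)_\infty^{7}\equiv(q;q)_\infty^{2}(q^5;q^5)_\infty\pmod{5}$ one gets
\[
D_2(q)=\prod_{m\ge 1}\frac{(1-q^{2m})^2}{(1-q^m)^7}\equiv\frac{1}{(q^5;q^5)_\infty}\prod_{m\ge 1}(1+q^m)^{2}=\frac{1}{(q^5;q^5)_\infty}\cdot\frac{(q^2;q^2)_\infty^{2}}{(q;q)_\infty^{2}}\pmod{5}.
\]
The factor $1/(q^5;q^5)_\infty$ is inert under a $5$-dissection, so I would $5$-dissect $(q^2;q^2)_\infty^{2}/(q;q)_\infty^{2}$ by re-expressing it through Ramanujan's $\varphi$, $\psi$ and $f(-q)=(q;q)_\infty$ and applying their classical $5$-dissection identities. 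A first pass produces $\sum_{n}d_2(5n+a)q^{n}\bmod 5$ for each $a$; a second pass, applied to the surviving pieces, produces $\sum_{n}d_2(25n+b)q^{n}\bmod 5$; a third reaches the residue classes modulo $125$. Tracking the coefficients through these passes, one verifies that the series attached to $47$, $97$, $122$ are annihilated modulo $5$, whereas the other residues congruent to $22$ modulo $25$ need not be.

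\textbf{Assembly and the main obstacle.} Feeding the three $d_2$-congruences back through the factorization above gives \eqref{3eqn1} for all $k,n\ge 0$, $\alpha\ge 1$ and $j\in\{1,2\}$; rewriting $5^{2\alpha}j+\tfrac{1}{8}(23\cdot 5^{2\alpha}+1)=-\tfrac{1}{8}(5^{2\alpha}-1)+(j+3)5^{2\alpha}$ yields the equivalent form \eqref{3eqn2}, and specializing $k=0$, $\alpha=1$ recovers the quoted corollary. The only genuinely non-routine step is the dissection: one must choose a theta-function representation of $(q^2;q^2)_\infty^{2}/(q;q)_\infty^{2}$ for which three successive $5$-dissections can be performed in closed form, and then bookkeep precisely which of the five sub-progressions survives at each stage, so that exactly the residues $47$, $97$, $122$ remain with all coefficients divisible by $5$. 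The $q^{125}$-series convolution handling general $k$ is routine by comparison.
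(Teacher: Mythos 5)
Your overall route to the two congruences of the corollary --- reduce $D_2(q)$ modulo $5$ to $\tfrac{1}{(q^5;q^5)_\infty}\cdot\tfrac{(q^2;q^2)_\infty^2}{(q;q)_\infty^2}$ and then perform successive $5$-dissections until the classes $97$ and $122$ modulo $125$ are seen to vanish --- is in outline exactly the paper's proof of Theorem \ref{3}: it passes through $\sum_n d_2(5n+2)q^n\equiv -2(q^5;q^5)_\infty\psi(q)f(-q)$ and then $\sum_n d_2(25n+22)q^n\equiv 2f(-q)\psi(q^5)f(-q^5)$, and finally kills the $q^{5n+3}$ and $q^{5n+4}$ parts of $f(-q)$ via \eqref{thm3eqn8}. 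But as written your argument has a genuine gap precisely at the step you yourself flag as the only non-routine one: the dissections are never carried out, and the first pass is not just ``classical dissections of $\varphi,\psi,f$'' --- the paper needs Yao's congruence for the cubic partition function (Lemma \ref{lemma3}) together with Lemmas \ref{lemma0}--\ref{lemma2} and Jacobi's identity \eqref{Jacobi} to reach \eqref{thm3eqn3} and \eqref{thm3eqn6}. A plan that defers exactly this bookkeeping is not yet a proof of the corollary.

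Moreover, the one place where you do state the outcome of that bookkeeping is partly false: you claim the class $47\pmod{125}$ is also annihilated. It is not. By \eqref{thm3eqn6} and \eqref{thm3eqn4}, the part of $f(-q)$ supported on exponents $\equiv 1\pmod 5$ is $-q(q^{25};q^{25})_\infty$, so extracting $n\equiv 1\pmod 5$ gives $\sum_{m\ge 0} d_2(125m+47)q^m\equiv -2\,(q^5;q^5)_\infty\,\psi(q)\,f(-q)\pmod 5$, whose constant term is $-2$; in particular $d_2(47)\equiv 3\pmod 5$. Consequently your proposed reduction of all of Theorem \ref{3} to three congruences modulo $125$, with general $k$ and $\alpha\ge 2$ handled by convolution with a $q^{125}$-series, collapses: for $\alpha\ge 2$ the arguments of \eqref{3eqn1} do lie in the class $47\pmod{125}$, but the divisibility there holds only on the sparser progressions, which the paper obtains from the inductive identity \eqref{thm3eqn7} before extracting the vanishing residues. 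This error does not touch the corollary itself (the case $k=0$, $\alpha=1$ needs only $97$ and $122$, and those classes genuinely die), but it shows that the verification you deferred was not actually performed, which is the substance of the proof.
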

\subsection{Outline}
The remainder of this paper is organized as follows: In Section \ref{basictheorysection} we review the properties of Ramanujan's theta functions, together with the theory of modular functions.  In Section \ref{pfthm} we prove Theorems \ref{2A}-\ref{3}.

Thereafter, we commit to proving Theorem \ref{Thm12}.  In Section \ref{proofsetup} we more precisely define the functions $L_{\alpha}$, together with the operators $U^{(\alpha)}$ which map each $L_{\alpha}$ to $L_{\alpha+1}$.  We also give some of the key properties of the Hauptmodul $x$, including the modular equations (\ref{modX}), (\ref{modZ}) which will allow us to build useful recurrence relations for $U^{(\alpha)}\left( x^m/(1+5x)^n \right)$.  In Section \ref{algebrastructure} we construct the function spaces $\mathcal{V}^{(\alpha)}_n$ in which our functions $L_{\alpha}$ live.  In particular, we define the critical homomorphism $\Omega$ whose kernel is closely related to our function spaces.  We then give Theorem \ref{thmuyox}, which demonstrate how the operator $U^{(\alpha)}$ affects elements of $\mathcal{V}^{(\alpha)}_n$.  We also give some important congruence properties of an associated auxiliary function.  In Section \ref{maintheoremsection} we build the induction necessary to finish the proof of Theorem \ref{Mythm}, and with it Theorem \ref{Thm12}.  In Section \ref{initialrelationssection} we show how to finish the proof of Theorem \ref{thmuyox}, together with the proof of (\ref{modX}), (\ref{modZ}), using the tools of the modular cusp analysis.  In Section \ref{finalsection} we discuss the implications of our approach to future work.  In the Appendix we give the ten initial relations of $U^{(\alpha)}(x^m)$ needed to complete the proof of Theorem \ref{thmuyox}.

Throughout Sections \ref{proofsetup}-\ref{initialrelationssection} we devote many of the more tedious computations to a self-contained Mathematica supplement which can be found online at \url{https://www.risc.jku.at/people/nsmoot/d5congsuppA.nb}.
\section{Basic Theory}\label{basictheorysection}
We will recall a few preliminary definitions and lemmas in context of Ramanujan's theta fucntions necessary to prove Theorems \ref{2A}-\ref{3}.  Thereafter, we will give some background in the theory of modular functions required to prove Theorem \ref{Mythm}.
\subsection{Ramanujan's theta functions and its allied results}\hspace{0 cm}
Recall (\ref{qpochdefna})-(\ref{qpochdefnb}), again with $a,q\in\mathbb{C}$ and $|q|<1$.  Ramanujan’s two-variable general theta function is defined as
\begin{equation}\label{Ramtheta}
f_R(a,b):= \sum_{n=-\infty}^{\infty}a^{n(n+1)/2}b^{n(n-1)/2}, \ \  |ab|<1.
\end{equation}
Three special cases of (\ref{Ramtheta}) are defined by, in Ramanujan's notation
\begin{equation*}
\begin{split}
\phi(q)&:=f_R(q,q)=\sum_{n=-\infty}^{\infty}q^{n^2}=(-q;q^2)^2_\infty (q^2;q^2)_\infty,\\
\psi(q)&:=f_R(q,q^3)=\sum_{n=0}^{\infty}q^{n(n+1)/2}=\dfrac{(q^2;q^2)^2_\infty}{(q;q)_\infty},\\
\xi(-q)&:=f_R(-q,-q^2)=\sum_{n=-\infty}^{\infty}(-1)^n q^{n(3n-1)/2}=(q;q)_{\infty}.
\end{split}
\end{equation*} 
Following Ramanujan's definition (\ref{Ramtheta}), Jacobi's famous triple product identity
\begin{equation*}
\sum_{n=-\infty}^{\infty}q^{n^2}z^n=(-qz;q^2)_{\infty}(-q/z;q^2)_{\infty}(q^2;q^2)_{\infty}, \ \ |q|<1 \ \text{and} \ z\neq 0
\end{equation*}
takes the form
\begin{equation}\label{Ramthetaidnet}
f_R(a,b)=(-a,ab)_{\infty} (-b,ab)_{\infty} (ab,ab)_{\infty}.
\end{equation}
Recall the Jacobi's identity \cite[Thm. 1.3.9, p. 14]{Berndt} that reads
\begin{equation}\label{Jacobi}
(q;q)^3_\infty=\sum_{n=0}^{\infty}(-1)^n (2n+1) q^{\frac{n(n+1)}{2}}.
\end{equation}
From \cite[Entry 31,eq. (31.1)]{Berndt}, can express $f_R(a,b)$ as the $n$-linear combination of theta functions in the following form
\begin{equation}\label{Ramthetadsct}
f_R(a,b)=\sum_{r=0}^{n-1}a^{r(r+1)/2}b^{r(r-1)/2}f_R(a^{n(n+1)/2+nr}b^{n(n-1)/2+nr},a^{n(n-1)/2-nr}b^{n(n+1)/2-nr}).
\end{equation}
\begin{lemma}\cite[Entry 10(i), 10(v), and 10(iv), p. 262]{Berndt}\label{lemma0}
	\begin{eqnarray}\label{lemma0eqn1}
	\psi(q^{1/5})&=&q^{3/5}\psi(q^5)+f_R(q^2,q^3)+q^{1/5} f_R(q,q^{4})\\\label{lemma0eqn2}
	\psi^2(q)&=& \dfrac{(q^2;q^2)_{\infty}(q^5;q^5)^3_{\infty}}{(q;q)_{\infty}(q^{10};q^{10})_{\infty}}+q \dfrac{(q^{10};q^{10})^4_{\infty}}{(q^5;q^5)^2_{\infty}}\\\label{lemma0eqn3}
	\phi^2(q^5)&=&\phi^2(q)-4qf_R(q,q^9)f_R(q^3,q^7).
	\end{eqnarray}
\end{lemma}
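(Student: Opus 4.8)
The plan is to place all three identities within the classical theory of Ramanujan's theta functions, using the general $n$-dissection \eqref{Ramthetadsct}, Jacobi's triple product \eqref{Ramthetaidnet}, and Jacobi's identity \eqref{Jacobi} to pass between series and product representations. In each case I would first clear fractional powers of $q$ by the substitution $q\mapsto q^{5}$, prove the resulting identity among ordinary $q$-series, and then reverse the substitution.

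For \eqref{lemma0eqn1}, after $q\mapsto q^{5}$ the assertion becomes the $5$-dissection $\psi(q)=q^{3}\psi(q^{25})+f(q^{10},q^{15})+qf(q^{5},q^{20})$. I would prove this directly from $\psi(q)=\sum_{n\ge 0}q^{n(n+1)/2}$ by splitting the sum according to $n\bmod 5$: writing $n=5m+2$ collapses the class $n\equiv 2$ into $q^{3}\psi(q^{25})$, while re-indexing by $m\mapsto -m-1$ merges the classes $n\equiv 0$ and $n\equiv 4$ into the bilateral series for $f(q^{10},q^{15})$, and the classes $n\equiv 1$ and $n\equiv 3$ merge likewise into $qf(q^{5},q^{20})$. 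Equivalently, one applies \eqref{Ramthetadsct} with $n=5$ to $\psi(q)=f(q,q^{3})=\sum_{m\in\mathbb{Z}}q^{m(2m-1)}$ and combines the five resulting theta terms, pairing $\{r=0,3\}$ and $\{r=1,2\}$ (the values $m(2m-1)\bmod 5$ depending only on $m\bmod 5$) while $r=4$ supplies $q^{3}\psi(q^{25})$.

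For \eqref{lemma0eqn3} I would recast the claim as $\phi^{2}(q)-\phi^{2}(q^{5})=4qf(q,q^{9})f(q^{3},q^{7})$ and attack it through the $5$-dissection of $\phi$, namely $\phi(q)=\phi(q^{25})+2qf(q^{15},q^{35})+2q^{4}f(q^{5},q^{45})$, which comes from splitting $\phi(q)=\sum_{n}q^{n^{2}}$ by $n\bmod 5$ (using that $n^{2}\not\equiv 2,3\pmod 5$). Squaring this expression, re-collecting the nine resulting products by residue class mod $5$, and simplifying the theta-products with \eqref{Ramthetaidnet} and the standard product/addition identities for $f(a,b)$ should exhibit $\phi^{2}(q)$ as $\phi^{2}(q^{5})$ plus the asserted correction term. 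As a safety net, both sides are weight-$1$ modular forms on $\Gamma_{0}(20)$, so the identity also follows from matching finitely many $q$-coefficients.

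For \eqref{lemma0eqn2} I would first observe that $(q^{10};q^{10})_{\infty}^{4}/(q^{5};q^{5})_{\infty}^{2}=\psi^{2}(q^{5})$, so the identity is equivalent to $\psi^{2}(q)-q\psi^{2}(q^{5})=(q^{2};q^{2})_{\infty}(q^{5};q^{5})_{\infty}^{3}/\big((q;q)_{\infty}(q^{10};q^{10})_{\infty}\big)$. This is the one statement that is not itself a dissection, and I would derive it by squaring the dissection of \eqref{lemma0eqn1}: writing $\psi(q)=P+Q+R$ with $P=q^{3}\psi(q^{25})$, $Q=f(q^{10},q^{15})$, $R=qf(q^{5},q^{20})$, the square $\psi^{2}(q)$ separates into residue classes mod $5$, and since $q\psi^{2}(q^{5})$ lies entirely in a single such class, $\psi^{2}(q)-q\psi^{2}(q^{5})$ differs from $\psi^{2}(q)$ only there; recombining the remaining theta-products and invoking \eqref{Ramthetaidnet} should reassemble the eta-quotient on the right. (As before, both sides are weight-$1$ forms on $\Gamma_{0}(10)$, so a finite coefficient check is available as a fallback.) I expect this reassembly step — turning the product of theta series left over after the dissection into the single eta-quotient — to be the main obstacle; the other two identities become routine once the re-indexing of the bilateral theta series is carried out carefully.
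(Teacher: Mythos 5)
The first thing to note is that the paper does not prove this lemma at all: it is quoted directly from Berndt (Entries 10(i), 10(v), 10(iv), p.~262), so there is no internal proof to compare against and your sketch has to stand on its own. Judged that way, your treatment of \eqref{lemma0eqn1} is correct and essentially complete: after $q\mapsto q^{5}$, the residues of $n(n+1)/2$ (equivalently of $m(2m-1)$ in the bilateral form $f(q,q^{3})=\sum_{m}q^{m(2m-1)}$) modulo $5$ pair up exactly as you say, and the re-indexings you indicate do reassemble the three pieces $q^{3}\psi(q^{25})$, $f(q^{10},q^{15})$ and $qf(q^{5},q^{20})$.

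For \eqref{lemma0eqn2} and \eqref{lemma0eqn3}, however, what you have is a plan with the decisive steps missing. In \eqref{lemma0eqn3} the correction term $4qf(q,q^{9})f(q^{3},q^{7})$ is \emph{not} supported on a single residue class mod $5$, so ``re-collecting the products by residue class'' after squaring the $5$-dissection of $\phi$ isolates nothing by itself; one must convert each product of two theta functions into sums (addition/product-to-sum formulas of the kind in Berndt's Entries 29--30) and match class by class, and none of that is carried out --- it is precisely the content of the entry. The same holds for \eqref{lemma0eqn2}: the observation that $q\psi^{2}(q^{5})$ sits in one residue class is nearly vacuous, and the ``reassembly'' you explicitly leave open is the whole proof; note that by \eqref{Ramthetaidnet} the eta-quotient on the right equals $f(q,q^{4})f(q^{2},q^{3})$, which is the concrete target the recombination must hit (and which suggests the shorter classical route via a product formula for $\psi^{2}(q)=f(q,q^{3})^{2}$ rather than squaring the dissection). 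The modular-forms fallback is legitimate in principle but is asserted, not established: these are weight-$1/2$ objects with fractional powers of $q$, so you would need to pass to the correct eta-quotient normalization, pin down the level and character, verify holomorphy at all cusps, and compute a Sturm bound before a finite coefficient check proves anything. Since the identities are classical, the honest options are to carry out those theta manipulations in full (they are Berndt's proofs) or to do what the paper does and cite them.
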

Now we recall the Roger-Ramanujan continued fraction
\begin{equation}\label{RogRama1}
R(q):={\frac{q^{1/5}}{1}}_{ \ +} \ {\frac{q}{1}}_{ \ +} \ {\frac{q^2}{1}}_{ \ +} \ {\frac{q^3}{1}}_{ \ +\dots} = q^{1/5} \frac{f_R(-q,-q^4)}{f_R(-q^2,-q^3)}, \ |q|<1.
\end{equation}
\begin{lemma}\cite[p. 161 and p. 164]{bcb1}\label{lemma1} If $T(q):=\frac{q^{1/5}}{R(q)}=\frac{f_R(-q,-q^4)}{f_R(-q^2,-q^3)}$,
	\begin{equation}\label{dissect1}
	T(q^5)-q-\frac{q^2}{T(q^5)}=\frac{(q;q)_{\infty}}{(q^{25};q^{25})_{\infty}}.
	\end{equation}
\end{lemma}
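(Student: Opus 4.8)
The plan is to recognize the claimed identity as a rescaling of Ramanujan's classical degree-$5$ modular equation for the Rogers--Ramanujan continued fraction, and then to prove that equation via Jacobi's triple product together with the $5$-dissection of Euler's pentagonal series.

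First, since $T(q)=q^{1/5}/R(q)$ we have $T(q^5)=q/R(q^5)$, so
\[
T(q^5)-q-\frac{q^2}{T(q^5)}=q\left(\frac{1}{R(q^5)}-1-R(q^5)\right),
\]
and the assertion becomes the identity $\dfrac{1}{R(q)}-1-R(q)=\dfrac{f(-q^{1/5})}{q^{1/5}f(-q^5)}$ evaluated at $q^5$ (recall $f(-q)=(q;q)_\infty$); this is exactly what we wish to cite from \cite{bcb1}, so it suffices to sketch its proof.

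Next, I would use the Jacobi triple product \eqref{Ramthetaidnet} to replace $R(q)=q^{1/5}f(-q,-q^4)/f(-q^2,-q^3)$ by its product form, put $\tfrac1R-1-R$ over the common denominator $q^{1/5}f(-q,-q^4)f(-q^2,-q^3)$, and note that this denominator equals $q^{1/5}(q;q)_\infty(q^5;q^5)_\infty=q^{1/5}f(-q)f(-q^5)$. After replacing $q$ by $q^5$, writing $P=f(-q^{10},-q^{15})$ and $Q=f(-q^5,-q^{20})$, and using $PQ=(q^5;q^5)_\infty(q^{25};q^{25})_\infty$, the whole identity collapses to the $5$-dissection
\[
(q;q)_\infty=(q^{25};q^{25})_\infty\left(\frac PQ-q-q^2\,\frac QP\right).
\]

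Finally, I would prove this dissection from Euler's pentagonal number theorem $(q;q)_\infty=\sum_{n\in\mathbb Z}(-1)^n q^{g_n}$, $g_n=n(3n-1)/2$. A short congruence check shows $g_n\equiv 0\pmod 5$ precisely when $n\equiv 0,2$, $g_n\equiv 1$ precisely when $n\equiv 1$, and $g_n\equiv 2$ precisely when $n\equiv 3,4\pmod 5$, so $(q;q)_\infty$ splits into three pieces $\mathcal A_0+\mathcal A_1+\mathcal A_2$ according to the residue of the exponent modulo $5$. For $n=5k+1$ one has $g_{5k+1}=1+25\,g_{-k}$ and $(-1)^{5k+1}=-(-1)^k$, so the pentagonal theorem again gives $\mathcal A_1=-q\,(q^{25};q^{25})_\infty$; re-indexing the $n\equiv 0,2$ and the $n\equiv 3,4$ terms similarly rewrites $\mathcal A_0$ and $\mathcal A_2$ as sums of two theta series in $q^5$. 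It then remains to show $\mathcal A_0=(q^{25};q^{25})_\infty\,P/Q$ and $\mathcal A_2=-q^2(q^{25};q^{25})_\infty\,Q/P$, which I would obtain by clearing denominators (multiplying by $Q$ and by $P$ respectively) and applying the quintuple product identity, equivalently a theta-function addition theorem. I expect this last identification to be the only genuine obstacle: the middle piece $\mathcal A_1$ is immediate, but matching the two ``outer'' pieces with $P/Q$ and $Q/P$ is precisely what forces the quintuple product identity into the argument. A fully modular alternative would be to normalize both sides of the desired identity to holomorphic modular functions on $\Gamma_0(25)$ with a single pole at $\infty$ and verify equality by comparing finitely many $q$-expansion coefficients via the valence formula, using the machinery recalled in Section~\ref{basictheorysection}.
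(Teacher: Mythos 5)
The paper gives no proof of this lemma---it is quoted verbatim from Berndt's book---and your sketch is essentially the classical argument behind the cited result: reduce to Ramanujan's identity $1/R(q)-1-R(q)=f(-q^{1/5})/\bigl(q^{1/5}f(-q^5)\bigr)$, equivalently the $5$-dissection of $(q;q)_\infty$, split the pentagonal series by residue class (your congruence check and the piece $\mathcal{A}_1=-q\,(q^{25};q^{25})_\infty$ are correct), and identify the two outer pieces, after clearing denominators, as the standard specializations of the quintuple product identity, so the plan is sound and matches the source's method. One remark: your reduction tacitly (and correctly) uses $T(q)=q^{1/5}/R(q)=f(-q^2,-q^3)/f(-q,-q^4)$, the reciprocal of the theta quotient displayed in the lemma; the displayed quotient is a typo in the statement, since with $T(q^5)=f(-q^5,-q^{20})/f(-q^{10},-q^{15})$ the left-hand side of \eqref{dissect1} would already disagree with the right-hand side at order $q^5$.
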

\begin{lemma}\cite[p. 165, Eqn. (7.4.14)]{bcb1}\label{lemma2}
	\begin{equation}\label{dissect2}
	\begin{split}
	\frac{1}{(q;q)_{\infty}}= \frac{(q^{25};q^{25})^{5}_{\infty}}{(q^5;q^5)^{6}_{\infty}}&\Biggl(T^{4}(q^5)+q T^{3}(q^5)+2q^2 T^{2}(q^5)+3q^3T(q^5)+5q^4-\frac{3q^5}{T(q^5)}\\
	&+\frac{2q^6}{T^{2}(q^5)}-\frac{q^7}{T^{3}(q^5)}+\frac{q^8}{T^{4}(q^5)}\Biggr).\\
	\end{split}
	\end{equation}
\end{lemma}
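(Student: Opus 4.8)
The statement to prove is Lemma~\ref{lemma2} (the dissection of $1/(q;q)_\infty$ in terms of $T(q^5)$). The plan is to derive it as a direct consequence of Lemma~\ref{lemma1}, which already gives the $5$-dissection-type identity $T(q^5) - q - q^2/T(q^5) = (q;q)_\infty/(q^{25};q^{25})_\infty$. Writing $t := T(q^5)$ and $P := (q^{25};q^{25})_\infty$, Lemma~\ref{lemma1} says $(q;q)_\infty = P\,(t - q - q^2/t)$, so
\begin{align*}
\frac{1}{(q;q)_\infty} = \frac{1}{P}\cdot\frac{1}{t - q - q^2 t^{-1}} = \frac{t}{P}\cdot\frac{1}{t^2 - q t - q^2}.
\end{align*}
The factor $t^2 - qt - q^2$ has roots $t = q\cdot\frac{1\pm\sqrt5}{2}$, i.e. its ``norm'' over the quintic twist is what produces the clean answer: the key algebraic observation is that $\prod_{j=0}^{4}\big(t - \zeta^j q - \zeta^{-j} q^2 t^{-1}\big)$ — equivalently the resultant eliminating the fifth root of unity — collapses to a polynomial in $t^5$, $q^5$, and $t^{-5}$, because $T(q)$ satisfies a modular equation of degree $5$ relating $T(q)$ and $T(q^5)$. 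So the first real step is to identify the minimal polynomial relation: from Lemma~\ref{lemma1} applied with $q \mapsto q$ and the analogous relation coming from the Rogers--Ramanujan continued fraction's degree-$5$ modular equation, one obtains
\begin{align*}
(q;q)_\infty \cdot \big(\text{companion factor}\big) = (q^{25};q^{25})_\infty^{6}/(q^5;q^5)_\infty^{5} \cdot (\text{polynomial in } t, t^{-1}),
\end{align*}
and the companion factor must be exactly $(q^5;q^5)_\infty^{?}$. Concretely, I would multiply numerator and denominator of $\dfrac{t}{P(t^2-qt-q^2)}$ by $\displaystyle\prod_{j=1}^{4}\big(t^2 - \zeta^j q\, t - \zeta^{2j} q^2\big)$ with $\zeta = e^{2\pi i/5}$, since under $q \mapsto \zeta q$ we have $T((\zeta q)^5) = T(q^5) = t$ unchanged, so each of these four factors equals $(\zeta q;\zeta q)_\infty/(q^{25};q^{25})_\infty$ by Lemma~\ref{lemma1}; their product together with the original factor gives $\prod_{j=0}^{4}(\zeta^j q;\zeta^j q)_\infty / P^5 = (q^5;q^5)_\infty^{6}/(q^{25};q^{25})_\infty^{5}\cdot (1/P^5)$ after using the standard identity $\prod_{j=0}^4 (\zeta^j q;\zeta^j q)_\infty = (q^5;q^5)_\infty^6/(q^{25};q^{25})_\infty^5$. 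Hence
\begin{align*}
\frac{1}{(q;q)_\infty} = \frac{(q^{25};q^{25})_\infty^{5}}{(q^5;q^5)_\infty^{6}}\cdot t\cdot\prod_{j=1}^{4}\big(t^2 - \zeta^j q\, t - \zeta^{2j} q^2\big).
\end{align*}

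The remaining step is purely computational: expand $t\cdot\prod_{j=1}^{4}(t^2 - \zeta^j q t - \zeta^{2j}q^2)$ using the elementary symmetric functions of $\{\zeta, \zeta^2, \zeta^3, \zeta^4\}$, namely $e_1 = -1$, $e_2 = e_3 = -1$ (wait — carefully, $e_1 = -1, e_2 = +1$... I would recompute these from $1 + x + x^2 + x^3 + x^4 = \prod_{j=1}^4(x-\zeta^j)$ giving $e_1 = -1$, $e_2 = 1$, $e_3 = -1$, $e_4 = 1$), so that cross terms with $q^1, q^2, q^3$ acquire coefficients $1, 2, 3$ and the $q^4$ term picks up $\sum$ of products evaluating to $5$, while the factor $t$ in front shifts everything to produce the symmetric Laurent polynomial $t^4 + q t^3 + 2q^2 t^2 + 3q^3 t + 5q^4 - 3q^5 t^{-1} + 2 q^6 t^{-2} - q^7 t^{-3} + q^8 t^{-4}$. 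The sign pattern $+,+,+,+,+,-,+,-,+$ is exactly what the symmetric-function bookkeeping forces once one notes the product is a reciprocal-type polynomial in $(t, q^2/t)$.

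The main obstacle is getting the companion-factor exponents exactly right — i.e. verifying that the product of the four Galois conjugates of $(q;q)_\infty$ together with $(q;q)_\infty$ itself really is $(q^5;q^5)_\infty^{6}/(q^{25};q^{25})_\infty^{5}$ up to the correct power of $(q^{25};q^{25})_\infty$, rather than some other combination. This is a bookkeeping exercise with the infinite product $\prod_{j=0}^{4}(\zeta^j q;\zeta^j q)_\infty$: one separates the factors $1 - \zeta^{jm}q^m$ according to whether $5 \mid m$, the former contributing $(q^5;q^5)_\infty$ raised to an appropriate power and the latter telescoping to $(q^5;q^5)_\infty^5/(q^{25};q^{25})_\infty^5$ via $\prod_{j=0}^4(1-\zeta^{jm}q^m) = 1 - q^{5m}$ for $5 \nmid m$. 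Once that identity is pinned down, the rest is a finite symmetric-function computation which the Mathematica supplement can certify. An alternative, possibly cleaner route that avoids introducing $\zeta$ explicitly: substitute the right-hand side of Lemma~\ref{lemma1} directly and clear denominators to check that $\frac{(q^5;q^5)_\infty^6}{(q^{25};q^{25})_\infty^5}\cdot\frac{1}{(q;q)_\infty}$ equals the stated Laurent polynomial in $t$, by first multiplying both sides by $(q;q)_\infty(q^{25};q^{25})_\infty = (q^{25};q^{25})_\infty^2(t - q - q^2/t)$ and reducing to a polynomial identity in $t$ modulo the degree-$5$ modular relation for $T$; this converts the whole lemma into a single resultant computation, which I would delegate to the supplement.
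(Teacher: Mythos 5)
The paper does not prove this lemma at all---it is quoted from Berndt \cite[p.~165, Eqn.~(7.4.14)]{bcb1}---so there is no in-paper argument to compare against; your plan (multiply the Lemma~\ref{lemma1} factor by its four conjugates under $q\mapsto\zeta^j q$, use that $T((\zeta^jq)^5)=T(q^5)$ and $(\zeta^jq)^{25}=q^{25}$ are fixed, and reduce to a finite symmetric-function computation) is exactly the standard textbook proof and is viable. However, as written it contains two concrete errors. First, the ``standard identity'' you invoke is misquoted: splitting $\prod_{j=0}^{4}\prod_{n\ge1}(1-\zeta^{jn}q^n)$ according to $5\mid n$ or $5\nmid n$ gives
\begin{equation*}
\prod_{j=0}^{4}(\zeta^jq;\zeta^jq)_\infty=\frac{(q^5;q^5)_\infty}{(q^{25};q^{25})_\infty}\cdot(q^5;q^5)_\infty^{5}=\frac{(q^5;q^5)_\infty^{6}}{(q^{25};q^{25})_\infty},
\end{equation*}
with exponent $1$, not $5$, in the denominator (you flagged this as the delicate point, but then plugged in the wrong value). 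Second, the $t$-power bookkeeping is off: each quadratic factor satisfies $t^2-\zeta^jqt-\zeta^{2j}q^2=t\cdot(\zeta^jq;\zeta^jq)_\infty/(q^{25};q^{25})_\infty$, so carrying the five factors of $t$ through correctly yields
\begin{equation*}
\frac{1}{(q;q)_\infty}=\frac{(q^{25};q^{25})_\infty^{5}}{(q^5;q^5)_\infty^{6}}\cdot\frac{1}{t^{4}}\prod_{j=1}^{4}\bigl(t^2-\zeta^jqt-\zeta^{2j}q^2\bigr),
\end{equation*}
whereas your displayed formula has $t\cdot\prod_{j=1}^{4}(\cdots)$ in place of $t^{-4}\prod_{j=1}^{4}(\cdots)$; as stated it equates a polynomial of degree $9$ in $t$ with a Laurent polynomial containing $t^{-4}$ terms, so it cannot be correct, and no subsequent ``shifting by the factor $t$'' can produce the negative powers.

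Once both slips are repaired the argument closes cleanly and needs no appeal to a Galois/resultant heuristic: the full conjugate product is the classical evaluation $\prod_{j=0}^{4}\bigl(t^2-\zeta^jqt-\zeta^{2j}q^2\bigr)=t^{10}-11q^5t^5-q^{10}$ (equivalently $(q^5;q^5)_\infty^6/(q^{25};q^{25})_\infty^6$ after substituting Lemma~\ref{lemma1} conjugate-wise), and dividing by $t^2-qt-q^2$ gives
\begin{equation*}
\prod_{j=1}^{4}\bigl(t^2-\zeta^jqt-\zeta^{2j}q^2\bigr)=t^8+qt^7+2q^2t^6+3q^3t^5+5q^4t^4-3q^5t^3+2q^6t^2-q^7t+q^8,
\end{equation*}
so that after dividing by $t^4$ one obtains precisely the bracket in (\ref{dissect2}). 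Your elementary symmetric function values ($e_1=-1$, $e_2=1$, $e_3=-1$, $e_4=1$) are the corrected ones, and with the two fixes above the computation you delegate to the supplement is a routine verification.
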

\begin{lemma}\cite[Lemma 2.1]{Yao}\label{lemma3}
	\begin{equation}\label{lemma3eqn1}
	\sum_{n=0}^{\infty} a(5n+2)q^n \equiv -2 (q;q)^3_{\infty}(q^2;q^2)^3_{\infty}\ (\text{mod}\ 5),
	\end{equation}
	where
	$a(n)$ is the cubic partition function and its generating function is
	\begin{equation*}
	\sum_{n=0}^{\infty} a(n)q^n=\dfrac{1}{(q;q)_{\infty}(q^2;q^2)_{\infty}}.
	\end{equation*}
\end{lemma}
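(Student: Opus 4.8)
The plan is to prove the congruence by a $5$-dissection argument; all power series lie in $\mathbb{Z}[[q]]$ and congruences are coefficientwise modulo $5$.

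First I would reduce the generating function modulo $5$ using the Frobenius endomorphism: since $(q;q)^5_\infty\equiv(q^5;q^5)_\infty$ and $(q^2;q^2)^5_\infty\equiv(q^{10};q^{10})_\infty\pmod{5}$, we obtain
\begin{equation*}
\sum_{n=0}^{\infty}a(n)q^n=\frac{1}{(q;q)_\infty(q^2;q^2)_\infty}\equiv\frac{(q;q)^4_\infty(q^2;q^2)^4_\infty}{(q^5;q^5)_\infty(q^{10};q^{10})_\infty}\pmod{5}.
\end{equation*}
Since the denominator is a power series in $q^5$, picking out the terms of exponent $\equiv 2\pmod 5$ from the right-hand side is the same as picking them out of the numerator and then dividing by $(q^5;q^5)_\infty(q^{10};q^{10})_\infty$. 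Hence the lemma reduces to the claim
\begin{equation}\label{cubickeyclaim}
\bigl[(q;q)^4_\infty(q^2;q^2)^4_\infty\bigr]_{5n+2}\equiv -2\,q^2(q^5;q^5)^4_\infty(q^{10};q^{10})^4_\infty\pmod{5},
\end{equation}
where $[\,\cdot\,]_{5n+2}$ denotes the subseries supported on exponents $\equiv 2\pmod 5$. Granting \eqref{cubickeyclaim}, the remarks above give $\sum_{n\ge 0}a(5n+2)q^{5n+2}\equiv -2q^2(q^5;q^5)^3_\infty(q^{10};q^{10})^3_\infty\pmod{5}$, and dividing by $q^2$ and then replacing $q^5$ by $q$ yields $\sum_{n\ge 0}a(5n+2)q^n\equiv -2(q;q)^3_\infty(q^2;q^2)^3_\infty\pmod{5}$, which is the stated congruence.

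To establish \eqref{cubickeyclaim} I would compute the full $5$-dissection of $(q;q)^4_\infty(q^2;q^2)^4_\infty$. Writing Lemma~\ref{lemma1} in the form $(q;q)_\infty=(q^{25};q^{25})_\infty\bigl(T(q^5)-q-q^2/T(q^5)\bigr)$ and raising it to the fourth power gives, with $T=T(q^5)$,
\begin{equation*}
(q;q)^4_\infty=(q^{25};q^{25})^4_\infty\Bigl(T^4-4qT^3+2q^2T^2+8q^3T-5q^4-\tfrac{8q^5}{T}+\tfrac{2q^6}{T^2}+\tfrac{4q^7}{T^3}+\tfrac{q^8}{T^4}\Bigr),
\end{equation*}
and the analogous identity under $q\mapsto q^2$, $T(q^5)\mapsto T(q^{10})$. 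Multiplying the two, reducing modulo $5$ (so the $-5q^4$ term vanishes), and collecting the terms whose explicit power of $q$ is $\equiv 2\pmod 5$ --- the prefactor $(q^{25};q^{25})^4_\infty(q^{50};q^{50})^4_\infty$ being a series in $q^5$, and only four combinations of residue classes contributing --- produces a Laurent polynomial in $T(q^5)$ and $T(q^{10})$.

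The hard part will be to show that this Laurent polynomial, once multiplied by the prefactor, collapses modulo $5$ to $-2q^2(q^5;q^5)^4_\infty(q^{10};q^{10})^4_\infty$; equivalently, that $(q;q)^4_\infty(q^2;q^2)^4_\infty$ is, modulo $5$, an eigenform of eigenvalue $-2$ for the operator that extracts the progression $5n+2$ and rescales $q^5\mapsto q$. I expect this to require the modular-equation identities for $T(q)$ supplied by Lemmas~\ref{lemma0}--\ref{lemma2}, together with the theory of the Rogers--Ramanujan continued fraction, in order to recognise the collapsed eta-quotient. A more elementary alternative is to dissect $(q;q)_\infty$ and $(q^2;q^2)_\infty$ by Euler's pentagonal number theorem and $(q;q)^3_\infty$, $(q^2;q^2)^3_\infty$ by Jacobi's identity \eqref{Jacobi} --- using that $(-1)^m(2m+1)\equiv 0\pmod 5$ when $m\equiv 2\pmod 5$, which deletes one residue class from each cube --- and then match the resulting theta products directly; see also \cite{Yao}.
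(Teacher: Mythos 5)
The paper does not prove this lemma at all---it is imported verbatim from \cite{Yao}---so there is no internal proof to compare against; the question is whether your argument is complete on its own, and it is not. Your opening reduction is correct: using $(q;q)^5_\infty\equiv(q^5;q^5)_\infty$ and $(q^2;q^2)^5_\infty\equiv(q^{10};q^{10})_\infty\pmod{5}$, together with the fact that extracting the progression $5n+2$ commutes with division by a series in $q^5$, does reduce \eqref{lemma3eqn1} to the claim that the $5n+2$ part of $(q;q)^4_\infty(q^2;q^2)^4_\infty$ is congruent to $-2q^2(q^5;q^5)^4_\infty(q^{10};q^{10})^4_\infty$ modulo $5$; your fourth-power expansion of \eqref{dissect1} is also correct, and your count of the contributing residue-class combinations is right. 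But that claim is, up to the cosmetic Frobenius step, the lemma itself, and it is exactly where your write-up stops: you say the hard part will be to show that the resulting Laurent polynomial collapses, and that you ``expect this to require'' further identities. Nothing in the toolkit you invoke (Lemmas \ref{lemma0}--\ref{lemma2}) supplies what is actually needed there: after extraction you are left with a genuine two-variable Laurent polynomial in $T(q^5)$ and $T(q^{10})$, and identifying it modulo $5$ with a single eta-quotient requires joint relations between the Rogers--Ramanujan continued fractions at $q$ and $q^2$ (or an equivalent theta-product computation) that you neither state nor prove.

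The elementary alternative you sketch---dissecting $(q;q)_\infty$, $(q^2;q^2)_\infty$ by the pentagonal number theorem and $(q;q)^3_\infty$, $(q^2;q^2)^3_\infty$ by Jacobi's identity \eqref{Jacobi}, exploiting that the terms with index $\equiv 2\pmod 5$ vanish modulo $5$---is the standard route to congruences of this type and is likely close to Yao's own argument, but ``match the resulting theta products directly'' is again the entire substance of the lemma: one must actually show that the surviving combinations of residue classes assemble into $-2q^2(q^5;q^5)^4_\infty(q^{10};q^{10})^4_\infty$, and that identification is not carried out. As it stands the proposal is a correct reformulation plus a plausible plan, not a proof; the honest options are to do what the paper does and cite \cite{Yao}, or to complete one of your two routes in full detail.
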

\subsection{Modular Functions and Riemann Surface Structure}
As with the unrestricted partition function $p(n)$, the generating function of $d_k(n)$ gives our first indication that modular functions may be useful in determining its properties.  Henceforth we will denote
\begin{align*}
q := e^{2\pi i\tau},
\end{align*} with $\tau\in\mathbb{H}$.  We also hold the convention that for any $N\in\mathbb{Z}$ with $N\neq 0$, we have
\begin{align*}
q^{1/N} := e^{2\pi i\tau/N}.
\end{align*}

Recall Dedekind's eta function \cite[Chapter 3]{Knopp}:
\begin{align*}
\eta(\tau):= e^{\pi i\tau/12}\prod_{n=1}^{\infty}\left( 1-e^{2\pi i n\tau} \right),
\end{align*} with domain $\tau\in\mathbb{H}$.  This function is a modular form of half-integral weight with an associated multiplier system \cite[Chapter 3]{Knopp}.  In particular, this means that
\begin{align}
\eta\left(\frac{a\tau+b}{c\tau+d}\right) = \left( -i(c\tau+d) \right)^{1/2}\epsilon(a,b,c,d)\eta(\tau),\label{etamodularrelation}
\end{align} in which $a,b,c,d\in\mathbb{Z}$ such that $ad-bc=1$ and $c>0$, and $\epsilon(a,b,c,d)$ is a specific root of unity (the factor $-i$ is introduced to insure the principal branch of the half-integer power).

It can be quickly demonstrated that for any $j$ with $0\le j\le 4$,
\begin{align*}
\sum_{r=0}^{4} \exp\left( -2\pi i j\cdot \frac{\tau+r}{5} \right) D_k\left( \exp\left( \frac{\tau+r}{5} \right) \right) = 5\sum_{n=0}^{\infty}d_k(5n+j)e^{2\pi i n\tau}.
\end{align*}  Notice that, taking (\ref{D2}) and accounting for the definition of $\eta(\tau)$ above, we have
\begin{align*}
D_k(q) = e^{\pi i \tau (k+1)/12}\frac{\eta(2\tau)^k}{\eta(\tau)^{3k+1}},
\end{align*} whence $D_k(q)$ is in effect a quotient of eta functions.  Because of this, the function $\sum_{n=0}^{\infty}d_k(5n+j)q^n$ should hold similar properties to (\ref{etamodularrelation}).  Indeed, we would like to adjust this function so that it follows a cleaner functional equation, i.e., exact symmetry for $\tau\rightarrow (a\tau+b)(c\tau+d)^{-1}$, for a large range of matrices $\left(\begin{smallmatrix}
  a & b \\
  c & d 
 \end{smallmatrix}\right)\in\mathrm{SL}\left( 2,\mathbb{Z} \right)$.

The subset of matrices we need are the congruence subgroups
\begin{align*}
\Gamma_0(N) = \Bigg\{ \begin{pmatrix}
  a & b \\
  c & d 
 \end{pmatrix}\in \mathrm{SL}(2,\mathbb{Z}) : N|c \Bigg\},
\end{align*} for $N\in\mathbb{Z}_{\ge 1}$.  Indeed, $\Gamma_0(N)$ acts on the extended complex plane
\begin{align*}
\hat{\mathbb{H}} := \mathbb{H}\cup\mathbb{Q}\cup\{\infty\}
\end{align*} by
\begin{align*}
\left(\begin{pmatrix}
  a & b \\
  c & d 
 \end{pmatrix},\tau\right)&\longrightarrow \frac{a\tau+b}{c\tau+d}.
\end{align*}  We express this group action by 
\begin{align*}
\gamma\tau := \frac{a\tau+b}{c\tau+d},
\end{align*} for any $\tau\in\hat{\mathbb{H}}$ and any $\gamma=\begin{pmatrix}
  a & b \\
  c & d 
 \end{pmatrix}$.  We define the orbits of this group action as the sets
\begin{align*}
[\tau]_N := \left\{ \gamma\tau: \gamma\in\Gamma_0(N) \right\}.
\end{align*}
\begin{definition}
For $N\in\mathbb{Z}_{\ge 1}$, the classical modular curve of level $N$ is the set of all orbits of $\Gamma_0(N)$ applied to $\hat{\mathbb{H}}$:
\begin{align*}
\mathrm{X}_0(N):=\left\{ [\tau]_N : \tau\in\hat{\mathbb{H}} \right\}.
\end{align*}
\end{definition}
Most importantly, one can construct a complex structure for the modular curves $\mathrm{X}_0(N)$, and in so doing show that the curves are \textit{Riemann surfaces}.  This allows us to study meromorphic functions on $\mathrm{X}_0(N)$ using most of the tools of classical complex analysis.

A description of the necessary complex structure is given in \cite[Chapters 2,3]{Diamond}.  The topology of $\mathrm{X}_0(N)$ includes two important nonnegative integers: the genus, denoted $\mathfrak{g}\left(  \mathrm{X}_0(N) \right)$, and the number of cusps, denoted $\epsilon_{\infty}\left(  \mathrm{X}_0(N) \right)$.  These can be computed according to formulae in \cite[Chapter 3, Sections 3.1, 3.8]{Diamond}.

We note that the group action of $\Gamma_0(N)$ can be restricted to $\hat{\mathbb{Q}}:=\mathbb{Q}\cup\{\infty\}$.  Indeed, for $\tau\in\hat{\mathbb{Q}}$, we must have $[\tau]_N\subseteq\hat{\mathbb{Q}}$.  Clearly such orbits partition $\hat{\mathbb{Q}}$ (in the set-theoretic sense).  It can be proved that only a finite number of such orbits exist for any $\mathrm{X}_0(N)$ \cite[Section 3.8]{Diamond}.
\begin{definition}\label{equalcuspdefn}
For any $N\in\mathbb{Z}_{\ge 1}$, the cusps of $\mathrm{X}_0(N)$ are the orbits of $\Gamma_0(N)$ applied to $\hat{\mathbb{Q}}$.
\end{definition}
To determine whether two elements of $\hat{\mathbb{Q}}$ are in the same cusp, we have the following \cite[Proposition 3.8.3]{Diamond}:
\begin{lemma}\label{cuspEqualLemma}
Two rational elements $a_1/c_1, a_2/c_2$ are in the same cusp of $\mathrm{X}_0(N)$ if and only if there exist some $j,y\in\mathbb{Z}$ with $\mathrm{gcd}(y,N)=1$ and
\begin{align}
y\cdot a_2&\equiv a_1+j\cdot c_1\pmod{N},\label{equivalentcuspsA}\\
c_2&\equiv y\cdot c_1\pmod{N}.\label{equivalentcuspsB}
\end{align}
\end{lemma}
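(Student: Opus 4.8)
The plan is to reduce everything to the action of $\mathrm{SL}(2,\mathbb{Z})$ on primitive column vectors and then translate a single matrix condition into the pair of congruences (\ref{equivalentcuspsA})--(\ref{equivalentcuspsB}). First I would normalize the representatives so that $\gcd(a_i,c_i)=1$ for $i\in\{1,2\}$, with $\infty=1/0$ represented by $\binom{1}{0}$. Since $\mathrm{SL}(2,\mathbb{Z})$ sends primitive vectors to primitive vectors, and two primitive vectors represent the same point of $\hat{\mathbb{Q}}$ precisely when they differ by a sign, the cusps $a_1/c_1$ and $a_2/c_2$ coincide in $\mathrm{X}_0(N)$ if and only if there is $\gamma=\left(\begin{smallmatrix} a & b \\ c & d\end{smallmatrix}\right)\in\Gamma_0(N)$ with $\gamma\binom{a_1}{c_1}=\pm\binom{a_2}{c_2}$. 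The whole statement is then the equivalence of this matrix condition with (\ref{equivalentcuspsA})--(\ref{equivalentcuspsB}).

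For the forward implication, suppose such a $\gamma$ exists, say $\binom{a_2}{c_2}=\varepsilon\binom{aa_1+bc_1}{ca_1+dc_1}$ with $\varepsilon=\pm1$. Because $N\mid c$, the second coordinate gives $c_2\equiv\varepsilon d\,c_1\pmod N$, so I would set $y:=\varepsilon d$. From $ad-bc=1$ together with $N\mid c$ we get $ad\equiv1\pmod N$, hence $\gcd(y,N)=\gcd(d,N)=1$, which is (\ref{equivalentcuspsB}). Multiplying the first coordinate by $y$ and using $ad\equiv1\pmod N$ again, $y a_2=d(aa_1+bc_1)\equiv a_1+(bd)c_1\pmod N$, so $j:=bd$ gives (\ref{equivalentcuspsA}).

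For the converse, given $j,y$ with $\gcd(y,N)=1$ satisfying (\ref{equivalentcuspsA})--(\ref{equivalentcuspsB}), I would build $\gamma$ explicitly. Complete each $\binom{a_i}{c_i}$ to a matrix $\sigma_i=\left(\begin{smallmatrix} a_i & b_i \\ c_i & d_i\end{smallmatrix}\right)\in\mathrm{SL}(2,\mathbb{Z})$, which is possible since $\gcd(a_i,c_i)=1$. Any $\gamma\in\mathrm{SL}(2,\mathbb{Z})$ carrying $a_1/c_1$ to $a_2/c_2$ satisfies $\sigma_2^{-1}\gamma\sigma_1\cdot\infty=\infty$, hence $\sigma_2^{-1}\gamma\sigma_1=\pm\left(\begin{smallmatrix} 1 & m \\ 0 & 1\end{smallmatrix}\right)$ for some $m\in\mathbb{Z}$, i.e. $\gamma=\pm\sigma_2\left(\begin{smallmatrix} 1 & m \\ 0 & 1\end{smallmatrix}\right)\sigma_1^{-1}$; conversely every matrix of this shape lies in $\mathrm{SL}(2,\mathbb{Z})$ and does send $a_1/c_1$ to $a_2/c_2$. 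A short computation shows the lower-left entry of $\sigma_2\left(\begin{smallmatrix} 1 & m \\ 0 & 1\end{smallmatrix}\right)\sigma_1^{-1}$ equals $c_2d_1-c_1d_2-m\,c_1c_2$, so one can force $\gamma\in\Gamma_0(N)$ exactly when the congruence $m\,c_1c_2\equiv c_2d_1-c_1d_2\pmod N$ is solvable in $m$, equivalently when $\gcd(c_1c_2,N)\mid(c_2d_1-c_1d_2)$. (Changing the free columns $b_i,d_i$ shifts $c_2d_1-c_1d_2$ by a multiple of $c_1c_2$, so the particular choice of $\sigma_i$ is immaterial.)

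The main obstacle is this last divisibility, which I would prove one prime at a time. Fix a prime $p$ with $p^e\parallel N$ and set $f_i:=v_p(c_i)$. The congruence $c_2\equiv yc_1\pmod N$ with $y$ a unit forces $f_1=f_2$ when $f_1<e$, and forces $f_1,f_2\ge e$ when $f_1\ge e$; thus $p\nmid c_1$ makes the claim trivial, and $f_1\ge e$ makes it immediate since then $p^e$ divides both $c_1$ and $c_2$. In the remaining range $1\le f:=f_1=f_2<e$, one factors $p^f$ out of each $c_i$ and uses $a_id_i\equiv1\pmod{c_i}$ (so $d_i\equiv a_i^{-1}\pmod{p^f}$, noting $p\nmid a_i$) to reduce the claim to $v_p(a_2c_2-a_1c_1)\ge\min(e,2f)$; this in turn follows by replacing $c_2$ by $yc_1$ modulo $p^e$ and then $ya_2$ by $a_1+jc_1$ modulo $p^e$, which yields $a_2c_2-a_1c_1\equiv jc_1^2\pmod{p^e}$ and hence $v_p(a_2c_2-a_1c_1)\ge\min(e,2f)$. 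Finally the sign ambiguity is absorbed by replacing $(a_2,c_2)$ with $(-a_2,-c_2)$ if necessary, and the degenerate cases $c_1=0$ or $c_2=0$ (one cusp being $\infty$) are handled directly: there the coprimality forces $a_i=\pm1$ and both congruences collapse to $N\mid c_{3-i}$, which is exactly the condition for $a_{3-i}/c_{3-i}$ to lie in the cusp of $\infty$.
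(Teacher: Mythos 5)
Your proof is correct, but there is nothing in the paper to compare it against: the paper does not prove Lemma \ref{cuspEqualLemma} at all, it simply cites Diamond--Shurman, Proposition 3.8.3. What you have written is a self-contained proof of that cited fact, and it is organized differently from the textbook argument: Diamond--Shurman first characterize $\Gamma(N)$-equivalence of cusps (primitive vectors agreeing up to sign modulo $N$) and then descend to $\Gamma_0(N)$, whereas you work with $\Gamma_0(N)$ directly, parametrizing every element of $\mathrm{SL}(2,\mathbb{Z})$ sending $a_1/c_1$ to $a_2/c_2$ as $\pm\sigma_2\left(\begin{smallmatrix}1&m\\0&1\end{smallmatrix}\right)\sigma_1^{-1}$ and reducing membership in $\Gamma_0(N)$ to solvability of $m\,c_1c_2\equiv c_2d_1-c_1d_2\pmod N$, which you then verify prime by prime from (\ref{equivalentcuspsA})--(\ref{equivalentcuspsB}). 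The forward direction ($y=\varepsilon d$, $j=bd$, with $ad\equiv1\pmod N$ giving $\gcd(y,N)=1$) is routine and right, and the two delicate steps in the converse check out: the valuation bookkeeping in the range $1\le f<e$ does reduce to $v_p(a_2c_2-a_1c_1)\ge\min(e,2f)$ (one clean way to see it is the identity $a_1a_2(c_2d_1-c_1d_2)=(a_2c_2-a_1c_1)+c_1c_2(a_2b_1-a_1b_2)$ together with $p\nmid a_1a_2$), and that bound does follow from $a_2c_2-a_1c_1\equiv jc_1^2\pmod{p^e}$ as you argue. One point you are right to make explicit, since the paper leaves it tacit: the lemma requires the representatives to be in lowest terms (it fails already for $N=2$ with $2/2$ versus $1/1$), so your normalization $\gcd(a_i,c_i)=1$, matching the convention of the cited source, is a genuine hypothesis rather than a cosmetic choice. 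Your construction buys an explicit matrix realizing the equivalence and an elementary, quotable argument; the textbook route buys uniformity across $\Gamma(N)$, $\Gamma_1(N)$, $\Gamma_0(N)$ at once.
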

The appeal of $\mathrm{X}_0(N)$ is that we can construct meromorphic functions which are constrained by the surface's manifold structure.  With this in mind, we give the following definition of a modular function:
\begin{definition}\label{DefnModular}
Let $f:\mathbb{H}\longrightarrow\mathbb{C}$ be holomorphic on $\mathbb{H}$.  Then $f$ is a modular function over $\Gamma_0(N)$ if the following properties are satisfied for every $\gamma=\left(\begin{smallmatrix}
  a & b \\
  c & d 
 \end{smallmatrix}\right)\in\mathrm{SL}(2,\mathbb{Z})$:
\begin{enumerate}
\item If $\gamma\in\Gamma_0(N)$, we have $\displaystyle{f\left( \gamma\tau \right) = f(\tau)}.$
\item We have $$\displaystyle{f\left( \gamma\tau \right) = \sum_{n=n_{\gamma}}^{\infty}\alpha_{\gamma}(n)q^{n\gcd(c^2,N)/ N}},$$  with $n_{\gamma}\in\mathbb{Z}$, and $\alpha_{\gamma}(n_{\gamma})\neq 0$.  If $n_{\gamma}\ge 0$, then $f$ is holomorphic at the cusp $[a/c]_N$.  Otherwise, $f$ has a pole of order $n_{\gamma}$, and principal part
 \begin{align}
 \sum_{n=n_{\gamma}}^{-1}\alpha_{\gamma}(n)q^{n\gcd(c^2,N)/ N},\label{princpartmod}
 \end{align} at the cusp $[a/c]_N$.
 \end{enumerate}  We refer to $\mathrm{ord}_{a/c}^{(N)}(f) := n_{\gamma}(f)$ as the order of $f$ at the cusp $[a/c]_N$.
\end{definition}
\begin{definition}
Let $\mathcal{M}\left(\Gamma_0(N)\right)$ be the set of all modular functions over $\Gamma_0(N)$, and $\mathcal{M}^{a/c}\left(\Gamma_0(N)\right)\subset \mathcal{M}\left(\Gamma_0(N)\right)$ to be those modular functions over $\Gamma_0(N)$ with a pole only at the cusp $[a/c]_N$.  These are commutative algebras with 1, and standard addition and multiplication \cite[Section 2.1]{Radu}.
\end{definition}
A modular function $f\in\mathcal{M}\left( \Gamma_0(N) \right)$ induces a meromorphic function
\begin{align*}
\hat{f}&:\mathrm{X}_0(N)\longrightarrow\mathbb{C}\cup\{\infty\}\\
&:[\tau]_N\longrightarrow f(\tau).
\end{align*}

In fact, there is a one-to-one correspondence between the set of meromorphic functions on $\mathrm{X}_0(N)$ with poles only at the cusps and $\mathcal{M}\left(\Gamma_0(N)\right)$, with a matching correspondence between the function orders \cite[Chapter VI, Theorem 4A]{Lehner}.  Moreover, all possible poles for $\hat{f}$ exist as a subset of $[\tau]_N\subseteq\hat{\mathbb{Q}}$, and (\ref{princpartmod}) represents the principal part of $\hat{f}$ in local coordinates near the cusp $[a/c]_N$.

\begin{theorem}\label{riemannsurfacetheorema}
Let $\mathrm{X}$ be a compact Riemann surface, and let $\hat{f}:\mathrm{X}\longrightarrow\mathbb{C}$ be analytic on the entirety of $\mathrm{X}$.  Then $\hat{f}$ is be a constant function.
\end{theorem}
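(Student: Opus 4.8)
The plan is the standard compactness-plus-maximum-modulus argument. Since $\mathrm{X}$ is a Riemann surface it is by convention connected, and since it is compact the continuous function $|\hat f|\colon \mathrm{X}\to\mathbb{R}_{\ge 0}$ attains its maximum value $M$ at some point $p\in\mathrm{X}$.

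First I would pass to a local holomorphic coordinate: choose a chart $\varphi\colon U\to V\subseteq\mathbb{C}$ with $p\in U$ and $V$ open and connected, and set $g:=\hat f\circ\varphi^{-1}$, which is holomorphic on $V$. Then $|g|$ attains the value $M$, its global maximum over all of $\mathrm{X}$, at the interior point $\varphi(p)\in V$. The classical maximum modulus principle for holomorphic functions of one complex variable then forces $g$ to be constant on $V$; pulling back along $\varphi$, we conclude that $\hat f$ is constant, say equal to $w_0$ with $|w_0|=M$, on the open neighborhood $U$ of $p$.

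Next I would show that the level set $S:=\hat f^{-1}(\{w_0\})$ is all of $\mathrm{X}$. It is nonempty since $p\in S$, and it is closed since $\hat f$ is continuous and $\{w_0\}$ is closed in $\mathbb{C}$. It is also open: if $q\in S$ then $|\hat f(q)|=|w_0|=M$, so $|\hat f|$ again attains its global maximum at $q$, and the chart argument of the previous paragraph, now carried out in a coordinate neighborhood centred at $q$, shows that $\hat f\equiv w_0$ on a neighborhood of $q$. Since $\mathrm{X}$ is connected and $S$ is a nonempty subset that is simultaneously open and closed, we get $S=\mathrm{X}$, i.e.\ $\hat f\equiv w_0$ is constant.

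The only point requiring any care is the use of connectedness of $\mathrm{X}$, which is built into the definition of a Riemann surface and without which the statement would fail, together with the observation that the maximum modulus principle is being applied at an \emph{interior} point of each chart. There is no substantive obstacle here: the result is a foundational fact of the theory of Riemann surfaces, recorded only because it is the mechanism by which identities between modular functions will later be reduced to finite verifications.
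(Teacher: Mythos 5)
Your argument is correct and complete: the compactness-plus-maximum-modulus argument, followed by the open-and-closed (connectedness) step to propagate the constant value, is the standard proof of this classical fact, and you handle the one delicate point — that the maximum modulus principle is applied at an interior point of a connected chart — properly. The paper itself states this theorem without proof, citing it as foundational background, so there is no competing argument to compare against; your write-up would serve as a perfectly adequate proof.
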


This is the most important underlying theorem in the subject, as the following corollary demonstrates:

\begin{corollary}
For any $N\in\mathbb{Z}_{\ge 1}$, if $f\in\mathcal{M}\left(\Gamma_0(N)\right)$ has nonnegative order at every cusp of $\Gamma_0(N)$, then $f$ is a constant.
\end{corollary}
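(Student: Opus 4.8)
The plan is to transfer the statement to the compact Riemann surface $\mathrm{X}_0(N)$ and then invoke Theorem \ref{riemannsurfacetheorema}. First I would recall that, by Definition \ref{DefnModular}, $f$ is holomorphic on all of $\mathbb{H}$ and invariant under $\Gamma_0(N)$, so it descends to a well-defined meromorphic function $\hat{f}$ on $\mathrm{X}_0(N)$; this is exactly the one-to-one correspondence cited from \cite[Chapter VI, Theorem 4A]{Lehner}. Away from the cusps, $\hat{f}$ is, in a local coordinate, simply $f$ composed with a biholomorphism, hence holomorphic there. So the only points of $\mathrm{X}_0(N)$ at which $\hat{f}$ could possibly fail to be holomorphic are the finitely many cusps.

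Next I would bring in the hypothesis. At each cusp $[a/c]_N$, part (2) of Definition \ref{DefnModular} expresses $f$ near that cusp as a series whose leading exponent is $n_\gamma = \mathrm{ord}_{a/c}^{(N)}(f)$, and by assumption this is $\ge 0$. Thus $\hat{f}$ has no principal part at any cusp, i.e.\ it is holomorphic at every cusp as well. Combined with the previous paragraph, $\hat{f}$ is holomorphic on the entirety of $\mathrm{X}_0(N)$.

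Finally, $\mathrm{X}_0(N)$ is a \emph{compact} Riemann surface --- the compactification being precisely the effect of adjoining the cusps to the quotient of $\mathbb{H}$ --- so Theorem \ref{riemannsurfacetheorema} applies directly and forces $\hat{f}$ to be a constant function. Pulling this back along the projection $\mathbb{H}\to\mathrm{X}_0(N)$, we conclude that $f$ itself is constant on $\mathbb{H}$, which is the claim.

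I do not expect a genuine obstacle here: the corollary is an immediate consequence of Theorem \ref{riemannsurfacetheorema} once the dictionary between $\mathcal{M}(\Gamma_0(N))$ and meromorphic functions on $\mathrm{X}_0(N)$ is in hand. The only point worth stating with some care is the equivalence ``holomorphic on $\mathbb{H}$ together with nonnegative order at every cusp'' $\iff$ ``holomorphic on the whole compact surface'': one should note both that the cusps are the only points added in passing from the quotient of $\mathbb{H}$ to $\mathrm{X}_0(N)$, and that the order convention $\mathrm{ord}_{a/c}^{(N)}$ of Definition \ref{DefnModular} matches the usual order of vanishing (resp.\ pole) of $\hat{f}$ in a local coordinate centered at that cusp.
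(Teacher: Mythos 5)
Your proposal is correct and is exactly the argument the paper intends: the corollary is stated as an immediate consequence of Theorem \ref{riemannsurfacetheorema} via the correspondence between $\mathcal{M}\left(\Gamma_0(N)\right)$ and meromorphic functions on $\mathrm{X}_0(N)$, with nonnegative order at every cusp guaranteeing that $\hat{f}$ is holomorphic on the whole compact surface. Your added remarks on matching the order convention with the local-coordinate order at the cusps are a reasonable bit of care but do not constitute a different route.
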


This is a useful tool for demonstrating equivalence between modular functions.  If we consider $f,g\in\mathcal{M}^{\infty}\left(\Gamma_0(N)\right)$, in which $f$ and $g$ contain matching principal parts at $[\infty]_N$, then $f-g$, and therefore $\hat{f}-\hat{g}$, has no poles at all, making $\hat{f}-\hat{g}$ (and therefore $f-g$), a constant.
\subsection{Construction of Modular Functions}
We have defined the Riemann surfaces that we will work over, and whose topologies play a significant role in our work.  We now wish to construct explicit modular functions, using $\eta$ as a guide.  The following theorem is due to Newman \cite[Theorem 1]{Newman}:

\begin{theorem}\label{Newmanntheorem}
Let $f = \prod_{\delta | N} \eta(\delta\tau)^{r_{\delta}}$, with $\hat{r} = (r_{\delta})_{\delta | N}$ an integer-valued vector, for some $N\in\mathbb{Z}_{\ge 1}$.  Then $f\in\mathcal{M}\left(\Gamma_0(N)\right)$ if and only if the following apply:
\begin{enumerate}
\item $\sum_{\delta | N} r_{\delta} = 0;$
\item $\sum_{\delta | N} \delta r_{\delta} \equiv 0\pmod{24};$
\item $\sum_{\delta | N} \frac{N}{\delta}r_{\delta} \equiv 0\pmod{24};$
\item $\prod_{\delta | N} \delta^{|r_{\delta}|}$ is a perfect square.
\end{enumerate}
\end{theorem}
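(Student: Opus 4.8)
The plan is to verify, in turn, the three requirements packaged into Definition \ref{DefnModular} --- holomorphy of $f$ on $\mathbb{H}$, the invariance $f(\gamma\tau)=f(\tau)$ for $\gamma\in\Gamma_0(N)$, and the prescribed shape of the $q$-expansion at each cusp --- while tracking which of (1)--(4) is responsible for each, so that both directions of the equivalence fall out together. Holomorphy on $\mathbb{H}$ is immediate and costs nothing: $\eta(\tau)=e^{\pi i\tau/12}\prod_{n\ge1}(1-e^{2\pi in\tau})$ is holomorphic and nowhere vanishing on $\mathbb{H}$, hence so is each $\eta(\delta\tau)$ and hence so is the finite product $f$, regardless of (1)--(4).

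For invariance, fix $\gamma=\left(\begin{smallmatrix}a&b\\c&d\end{smallmatrix}\right)\in\Gamma_0(N)$. Since $\delta\mid N\mid c$, one may write $\delta\,\gamma\tau=\gamma_\delta(\delta\tau)$ with $\gamma_\delta=\left(\begin{smallmatrix}a&b\delta\\c/\delta&d\end{smallmatrix}\right)\in\mathrm{SL}(2,\mathbb{Z})$ of determinant $1$ and positive lower-left entry. Applying the $\eta$-transformation law (\ref{etamodularrelation}) to each factor and multiplying, the automorphy factors $\left(-i(c\tau+d)\right)^{1/2}$ collect to the power $\tfrac12\sum_{\delta\mid N}r_\delta$, giving
\begin{align*}
f(\gamma\tau)=\left(-i(c\tau+d)\right)^{\frac12\sum_{\delta\mid N}r_\delta}\,\nu(\gamma)\,f(\tau),\qquad \nu(\gamma):=\prod_{\delta\mid N}\epsilon\!\left(a,b\delta,c/\delta,d\right)^{r_\delta}.
\end{align*}
The automorphy factor is trivial for all $\gamma$ precisely when the weight $\tfrac12\sum r_\delta$ vanishes, i.e.\ precisely under condition (1); if (1) fails, $f(\gamma\tau)/f(\tau)$ is non-constant in $\tau$ and $f$ cannot be $\Gamma_0(N)$-invariant. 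Granting (1), $\gamma\mapsto\nu(\gamma)$ is a character of $\Gamma_0(N)$ valued in roots of unity, and $f$ is $\Gamma_0(N)$-invariant if and only if $\nu\equiv1$. Inserting the classical evaluation of $\epsilon$ in terms of Dedekind sums, together with (for odd lower-left entry) a Jacobi symbol $\left(\tfrac{d}{c/\delta}\right)$ --- see \cite[Chapter 3]{Knopp} --- one splits $\nu(\gamma)$ into a Jacobi-symbol part and a Dedekind-sum exponential part. Testing $\nu\equiv1$ on the translation $\left(\begin{smallmatrix}1&1\\0&1\end{smallmatrix}\right)\in\Gamma_0(N)$, via the explicit expansion $f=q^{\frac1{24}\sum_{\delta\mid N}\delta r_\delta}(1+O(q))$, reduces to condition (2); testing near the cusp $0$, using the substitution $\tau\mapsto-1/\tau$ (which carries the eta-quotient with exponents $(r_\delta)_{\delta\mid N}$ to the one with exponents $(r_{N/\delta})_{\delta\mid N}$), reduces to condition (3); and triviality of the Jacobi-symbol part for all admissible $\gamma$ amounts, by quadratic reciprocity, to $\prod_{\delta\mid N}\delta^{r_\delta}$ --- equivalently $\prod_{\delta\mid N}\delta^{|r_\delta|}$, the two differing by a square --- being a perfect square, i.e.\ condition (4).

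The substantive content, and the step I expect to be the main obstacle, is showing that (2)--(4) force $\nu$ to vanish on \emph{all} of $\Gamma_0(N)$ and not merely on these test elements; this is the fiddly bookkeeping, handled by the reciprocity law for Dedekind sums together with the relation $\sum_{\delta\mid N}r_\delta=0$. Once (1)--(4) all hold, $f$ is holomorphic on $\mathbb{H}$ and $\Gamma_0(N)$-invariant, and only the cusp behavior remains: choosing $\sigma\in\mathrm{SL}(2,\mathbb{Z})$ with $\sigma\infty=a/c$ and transforming each $\eta(\delta\sigma\tau)$ by (\ref{etamodularrelation}) exhibits $f\circ\sigma$ as a constant unit times a Laurent series in $q^{\gcd(c^2,N)/N}$ whose leading exponent $\mathrm{ord}_{a/c}^{(N)}(f)$ is, up to the standard normalizing constant depending only on $N$ and $c$, proportional to $\sum_{\delta\mid N}\gcd(\delta,c)^2 r_\delta/\delta$, hence finite. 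Thus $f$ has at worst a finite-order pole at each cusp and its local expansion has exactly the form required in Definition \ref{DefnModular}(2), so no further constraints arise. Combining the three parts gives $f\in\mathcal{M}(\Gamma_0(N))$ under (1)--(4), while the reductions above show that each of (1)--(4) is in turn forced by $f\in\mathcal{M}(\Gamma_0(N))$, completing the equivalence.
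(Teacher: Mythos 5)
You should note at the outset that the paper does not prove this statement at all: it is quoted as a known result and attributed to Newman \cite[Theorem 1]{Newman}, so there is no in-paper argument to compare against, and your proposal has to stand on its own as a proof of Newman's criterion.

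As it stands it does not. The skeleton is the standard one (holomorphy is free; write $\delta\gamma\tau=\gamma_\delta(\delta\tau)$ with $\gamma_\delta=\left(\begin{smallmatrix}a&b\delta\\c/\delta&d\end{smallmatrix}\right)$, collect the automorphy factors to weight $\tfrac12\sum r_\delta$, reduce to triviality of the multiplier character $\nu$), and that part is fine. But the substantive half of the equivalence --- that conditions (2)--(4) force $\nu(\gamma)=1$ for \emph{every} $\gamma\in\Gamma_0(N)$ --- is exactly the step you flag as ``the main obstacle'' and then do not carry out. Checking $\nu$ on the translation $\left(\begin{smallmatrix}1&1\\0&1\end{smallmatrix}\right)$ and ``near the cusp $0$'' cannot close this: those elements do not generate $\Gamma_0(N)$ (indeed $\tau\mapsto-1/\tau$ is not even in $\Gamma_0(N)$ for $N>1$, so ``testing $\nu$'' on it is not meaningful as stated --- the role of condition (3) is really the integrality of the exponents in the expansion at the cusp $0$, a different requirement from invariance), and a character can be trivial on a few test elements without being trivial on the group. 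The actual content of Newman's theorem is the explicit evaluation of $\nu(\gamma)=\prod_{\delta\mid N}\epsilon(a,b\delta,c/\delta,d)^{r_\delta}$ for a \emph{general} $\gamma\in\Gamma_0(N)$, via the Dedekind-sum/Jacobi-symbol formula for $\epsilon$ and the reciprocity law, followed by the verification that (1)--(4) make this product $1$; conversely, the necessity of (4) needs an argument that triviality of the quadratic character $d\mapsto\left(\frac{\prod\delta^{r_\delta}}{d}\right)$ on all lower-right entries $d$ occurring in $\Gamma_0(N)$ forces $\prod\delta^{r_\delta}$ to be a square, which you also only gesture at. Until that bookkeeping is actually done (or the statement is simply cited, as the paper does), the proposal is an outline of the known proof rather than a proof.
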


An additional advantage in the use of $\eta$ is the comparable ease in computing the order of the associated modular functions at each cusp.  We can compute the orders using the following theorem \cite[Theorem 23]{Radu}, generally attributed to Ligozat:

\begin{theorem}\label{Ligozat}
If $f = \prod_{\delta | N} \eta(\delta\tau)^{r_{\delta}}\in\mathcal{M}\left(\Gamma_0(N)\right)$, then the order of $f$ at the cusp $[a/c]_N$ is given by the following:
\begin{align*}
\mathrm{ord}_{a/c}^{(N)}(f) = \frac{N}{24\gcd{(c^2,N)}}\sum_{\delta | N} r_{\delta}\frac{\gcd{(c,\delta)}^2}{\delta}.
\end{align*}
\end{theorem}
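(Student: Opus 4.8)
The plan is to reduce the statement to a direct application of the transformation law of $\eta$ under $\mathrm{SL}(2,\mathbb{Z})$, together with a careful bookkeeping of how $\eta(\delta\tau)$ transforms near the cusp $[a/c]_N$. First I would fix a cusp representative $a/c$ with $\gcd(a,c)=1$ and choose a matrix $\gamma = \left(\begin{smallmatrix} a & b \\ c & d \end{smallmatrix}\right)\in\mathrm{SL}(2,\mathbb{Z})$, so that $\gamma\cdot\infty = a/c$; the local parameter at the cusp is $q^{\gcd(c^2,N)/N}$ in the notation of Definition \ref{DefnModular}. For each divisor $\delta\mid N$, the key observation is that $\eta(\delta\gamma\tau)$ can be rewritten as $\eta(\gamma_\delta\,\tau')$ for a suitable matrix $\gamma_\delta\in\mathrm{SL}(2,\mathbb{Z})$ and a rescaled variable $\tau'$, by factoring the integer matrix $\left(\begin{smallmatrix}\delta a & \delta b\\ c & d\end{smallmatrix}\right)$ as a product of an $\mathrm{SL}(2,\mathbb{Z})$ matrix and an upper-triangular integer matrix $\left(\begin{smallmatrix} g & * \\ 0 & \delta/g\end{smallmatrix}\right)$, where $g=\gcd(c,\delta)$ (Smith normal form for $2\times 2$ integer matrices). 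This is the standard computation behind the width of a cusp and the expansion of an eta-quotient there.

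Next I would apply (\ref{etamodularrelation}) to each factor $\eta(\gamma_\delta\tau')$: the automorphy factor $(-i(c'\tau'+d'))^{1/2}$ contributes to the overall weight (which cancels in an eta-quotient by condition (1) of Theorem \ref{Newmanntheorem}), the root of unity $\epsilon(\cdot)$ contributes to the multiplier (which is trivial on $\Gamma_0(N)$ precisely by conditions (2)--(4)), and the remaining factor is $e^{\pi i \delta'\tau'/12}\cdot(\text{unit power series in the local parameter})$, where $\delta' = \gcd(c,\delta)^2/\delta$ up to the scaling introduced by passing from $\tau$ to $\tau'$. Collecting the exponents of the leading $q$-power across all $\delta\mid N$ in the product $\prod_{\delta\mid N}\eta(\delta\gamma\tau)^{r_\delta}$, the total order in the local variable $q^{\gcd(c^2,N)/N}$ works out to
\begin{align*}
\frac{1}{24}\sum_{\delta\mid N} r_\delta\frac{\gcd(c,\delta)^2}{\delta}\cdot\frac{N}{\gcd(c^2,N)},
\end{align*}
which is exactly the claimed formula. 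The bulk of the work is verifying that the rescaling factors combine to give precisely the denominator $\gcd(c^2,N)/N$ and that the unit power series really is a unit (nonzero constant term), so that no cancellation among the $\delta$'s lowers the order.

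The main obstacle I anticipate is the careful matrix decomposition and the tracking of the width: one must be precise about the choice of $b,d$ (which can be adjusted modulo $c$), show the resulting leading exponent is independent of that choice, and confirm that the fractional exponents of $q$ assemble into an integer multiple of the cusp-width parameter $\gcd(c^2,N)/N$ — this last point is implicitly guaranteed by the hypothesis $f\in\mathcal{M}(\Gamma_0(N))$ (so Newman's conditions hold), but it should be checked that the arithmetic conditions (2) and (3) are exactly what forces integrality of $\mathrm{ord}_{a/c}^{(N)}(f)$. Since this theorem is classical (due to Ligozat, and reproduced in \cite[Theorem 23]{Radu}), I would in practice cite that reference for the detailed computation and present only the outline above, emphasizing that the formula is a bookkeeping consequence of (\ref{etamodularrelation}) applied divisor-by-divisor.
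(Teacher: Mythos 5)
The paper offers no proof of this statement at all: it is quoted verbatim from \cite[Theorem 23]{Radu} and attributed to Ligozat, so your plan of sketching the argument and deferring the detailed bookkeeping to that reference is exactly what the paper itself does. Your outline is the correct classical argument behind the citation --- factor $\left(\begin{smallmatrix}\delta a & \delta b\\ c & d\end{smallmatrix}\right)$ as an $\mathrm{SL}(2,\mathbb{Z})$ matrix times an upper-triangular matrix $\left(\begin{smallmatrix} g & *\\ 0 & \delta/g\end{smallmatrix}\right)$ with $g=\gcd(c,\delta)$ (using $\gcd(a,c)=1$), apply (\ref{etamodularrelation}), read off the leading exponent $\gcd(c,\delta)^2/(24\delta)$ in $q$ for each factor, and rescale by the width factor $N/\gcd(c^2,N)$ --- so it is sound as a sketch, with the epsilon/automorphy factors correctly dismissed as irrelevant to the order.
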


Finally, we give a theorem from \cite[Theorem 39]{Radu} which we can use to compute the orders of modular functions with the coefficients taken in a given arithmetic progression.

\begin{theorem}\label{orderboundmodfunc}
Suppose that for some $M\in\mathbb{Z}_{\ge 1}$ and an integer-valued vector $(r_{\delta})_{\delta | M}$ we define the function $a(n)$ by
\begin{align*}
\sum_{n=1}^{\infty} a(n)q^n := \prod_{\delta | M} (q^{\delta};q^{\delta})_{\infty}^{r_{\delta}}
\end{align*}  Moreover, suppose that $m,t,N\in\mathbb{Z}_{\ge 0}$ such that $0\le t< m$, and that
\begin{align*}
f = \prod_{\lambda | N} \eta(\lambda\tau)^{s_{\delta}} \sum_{n=1}^{\infty} a(mn+t)q^n\in\mathcal{M}\left( \Gamma_0(N) \right).
\end{align*}  Then
\begin{align*}
\mathrm{ord}_{a/c}^{(N)}(f) \ge \frac{N}{\mathrm{gcd}(c^2,N)} \left(\min\limits_{0\le l\le m-1} \frac{1}{24}\sum_{\delta | M} r_{\delta} \frac{\mathrm{gcd}(\delta (a+l\cdot c\cdot \mathrm{gcd}(m^2-1,24)),mc)^2}{\delta m} + \frac{1}{24}\sum_{\lambda | N} \frac{s_{\lambda}\mathrm{gcd}(\lambda, c)^2}{\lambda}\right).
\end{align*}
\end{theorem}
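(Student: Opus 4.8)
The plan is to reduce the claimed bound to the classical cusp-order formula for eta-quotients, Theorem~\ref{Ligozat}, applied separately to each term of an arithmetic-progression dissection.  Write $g(\tau) := \sum_{n\ge 1}a(n)q^n = \prod_{\delta | M}(q^{\delta};q^{\delta})_{\infty}^{r_{\delta}}$ and $h(\tau) := \sum_{n\ge 1}a(mn+t)q^n$, so that $f = \prod_{\lambda | N}\eta(\lambda\tau)^{s_{\lambda}}\cdot h(\tau)$.  Since $f\in\mathcal{M}(\Gamma_0(N))$ is given, $f$ has a genuine Laurent expansion in the local parameter $q^{\gcd(c^2,N)/N}$ at each cusp $[a/c]_N$ (Definition~\ref{DefnModular}), so it is enough to bound the leading exponent.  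The standard root-of-unity filter, with $\zeta := e^{2\pi i/m}$, gives
\begin{align*}
h(\tau) = \frac{1}{m}\,q^{-t/m}\sum_{k=0}^{m-1}\zeta^{-kt}\,g\!\left(\frac{\tau+k}{m}\right),
\end{align*}
and because $(q;q)_{\infty} = e^{-\pi i\tau/12}\eta(\tau)$, each summand $g((\tau+k)/m)$ equals an explicit fractional $q$-power times $\prod_{\delta | M}\eta\!\left(\delta(\tau+k)/m\right)^{r_{\delta}}$.

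First I would fix a cusp $[a/c]_N$, choose $\gamma = \left(\begin{smallmatrix} a & b \\ c & d\end{smallmatrix}\right)\in\mathrm{SL}(2,\mathbb{Z})$ representing it, and expand $f(\gamma\tau)$ one summand at a time.  After substituting $\tau\mapsto\gamma\tau$, the argument of the $\delta$-th eta factor in the $k$-th summand is $\delta(\gamma\tau+k)/m$, whose associated $2\times 2$ integer matrix has determinant $\delta m$; I would bring that matrix to row-echelon form, writing it as $\gamma'\left(\begin{smallmatrix}\alpha & \beta \\ 0 & \mu\end{smallmatrix}\right)$ with $\gamma'\in\mathrm{SL}(2,\mathbb{Z})$, $\alpha\mu = \delta m$, and $\alpha = \gcd(\delta(a+kc),mc)$.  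Applying the transformation law~(\ref{etamodularrelation}) to $\gamma'$ turns $\eta$ of this argument into a root of unity times a power of $c\tau+d$ times $\eta\!\left((\alpha\tau+\beta)/\mu\right)$, which expands at $i\infty$ with leading exponent $\tfrac{\alpha}{24\mu} = \tfrac{1}{24}\cdot\tfrac{\gcd(\delta(a+kc),mc)^2}{\delta m}$.  Summing over $\delta | M$, adding the contribution of $\prod_{\lambda | N}\eta(\lambda\tau)^{s_{\lambda}}$ --- which is exactly Ligozat's formula, Theorem~\ref{Ligozat} --- and adding $-t/m$ from the prefactor, gives the exact order of the $k$-th summand at $[a/c]_N$.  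Since $\mathrm{ord}_{a/c}^{(N)}$ of a sum is at least the minimum of the orders of the summands, minimizing over $k$ produces a lower bound for $\mathrm{ord}_{a/c}^{(N)}(f)$.

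What remains is arithmetic normalization: rescaling by $N/\gcd(c^2,N)$ so the expansion variable is the true local parameter at $[a/c]_N$; absorbing the various factors of $24$ coming from the $\eta$-prefactors and from $q^{-t/m}$; and replacing the summation index $k$ by $l$, with the factor $\gcd(m^2-1,24)$ inserted.  This last step encodes the freedom to change the completing entries $b,d$ (equivalently, to shift $k$) without leaving the cusp, together with the $q^{1/24}$-periodicity of $\eta$, so that the effective index runs over a copy of $\{0,\dots,m-1\}$ with $a+kc$ replaced by $a + l\,c\,\gcd(m^2-1,24)$.  Matching constants then yields exactly the right-hand side of the theorem, with the minimum over $l\in\{0,\dots,m-1\}$.

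The main obstacle is precisely this bookkeeping: carrying the fractional $q$-powers, the cusp widths $\gcd(c^2,N)$, and the multiplier-system roots of unity through each $\mathrm{SL}(2,\mathbb{Z})$-factorization, and verifying that the $\gcd(m^2-1,24)$ shift correctly captures the ambiguity in the representative matrix.  It is also worth noting why only an inequality is obtained: the orders of distinct $k$-summands can coincide, so their leading terms may cancel and the true order of $f$ may strictly exceed the bound --- which is harmless, since in the applications one only needs a lower bound large enough to force a principal part to vanish.
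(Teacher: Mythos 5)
The paper itself gives no proof of this statement---it is quoted from Radu \cite[Theorem 39]{Radu}---so there is no internal argument to compare against; your dissection-plus-eta-transformation outline is essentially Radu's own route, and its skeleton does work: the root-of-unity filter, the factorization of the matrix of determinant $\delta m$ with $\alpha=\gcd(\delta(a+kc),mc)$, the leading exponent $\alpha^2/(24\delta m)$, the Ligozat-type term $\tfrac{1}{24}\sum_{\lambda|N}s_\lambda\gcd(\lambda,c)^2/\lambda$ for the eta-quotient prefactor, and the rescaling by $N/\gcd(c^2,N)$ to pass from $q$-exponents to the local parameter $q^{\gcd(c^2,N)/N}$ assemble into exactly the stated bound.

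Two points in your write-up need repair. First, for $c\neq 0$ the individual $k$-summands have no $q$-expansion at $[a/c]_N$ at all: after $\tau\mapsto\gamma\tau$ the prefactors $e^{-2\pi i t(\gamma\tau+k)/m}$ and $e^{-2\pi i\delta r_\delta(\gamma\tau+k)/(24m)}$ are of the form $(\text{const})\cdot e^{2\pi iC/(c(c\tau+d))}$, which tend to nonzero constants rather than contributing fractional $q$-powers, and the eta multipliers contribute half-integer powers of linear factors in $\tau$. So ``the exact order of the $k$-th summand'' must be replaced by its exponential decay rate as $\operatorname{Im}\tau\to\infty$, and the inequality obtained by comparing that rate with the genuine expansion of $f$, which exists because $f\in\mathcal{M}(\Gamma_0(N))$. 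In particular the ``$-t/m$ from the prefactor'' is spurious at cusps with $c\neq 0$; if you carried it through literally you would end with an extra $-tN/(m\gcd(c^2,N))$ that is not in the theorem, so the constants would not ``match exactly.'' Second, the factor $\gcd(m^2-1,24)$ requires no appeal to changing the completing entries $b,d$ or to the $q^{1/24}$-periodicity of $\eta$: since $\gcd(m^2-1,24)$ is coprime to $m$, the map $l\mapsto l\cdot\gcd(m^2-1,24)\bmod m$ permutes $\{0,\dots,m-1\}$, and $a+lc\gcd(m^2-1,24)\equiv a+kc\pmod{mc}$ for the corresponding $k$, so $\gcd(\delta(a+lc\gcd(m^2-1,24)),mc)=\gcd(\delta(a+kc),mc)$ for every $\delta$ simultaneously; hence the minimum in the statement coincides with your naive minimum over $k$, and the bound follows. (The twisted indexing is only genuinely needed in Radu's sharper variants, where the minimum runs over an orbit of $t$ rather than all residues.) With these adjustments your argument is a correct proof of the quoted lower bound.
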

\subsection{The $U_{\ell}$ Operator}
The central functions in our work are $L_{\alpha}$, $\alpha\ge 1$.  To relate $L_{\alpha}$ to $L_{\alpha+1}$, we need the $U_{\ell}$ operator, for a prime $\ell$.  Here, supposing that we have a function
\begin{align*}
f(\tau) = \sum_{m\ge M}a(m)e^{2\pi i m\tau},
\end{align*} for $\tau\in\mathbb{H}$, we will use the following notation in changing variables to $q$:
\begin{align*}
\tilde{f}(q) = \sum_{m\ge M}a(m)q^m.
\end{align*}
\begin{definition}
Let $\tilde{f}(q) = \sum_{m\ge M}a(m)q^m$.  Then
\begin{align}
U_{\ell}\left(\tilde{f}(q)\right) := \sum_{\ell m\ge M} a(\ell m)q^m.\label{defU3}
\end{align}
\end{definition}

We give some properties of $U_{\ell}$.  The proofs can be found in \cite[Chapter 10]{Andrews} and \cite[Chapter 8]{Knopp}.

\begin{lemma}\label{impUprop}
Given two functions 
\begin{align*}
\tilde{f}(q) = \sum_{m\ge M}a(m)q^m,\ \tilde{g}(q) = \sum_{m\ge N}b(m)q^m,
\end{align*} any $\alpha\in\mathbb{C}$, a primitive $\ell$-th root of unity $\zeta$, and the convention that $q^{1/\ell}~:=~e^{2\pi i\tau/\ell}$, we have the following:
\begin{enumerate}
\item $U_{\ell}\left(\alpha\cdot \tilde{f}+\tilde{g}\right) = \alpha\cdot U_{\ell}\left(\tilde{f}\right) + U_{\ell}\left(\tilde{g}\right)$;
\item $U_{\ell}\left(\tilde{f}(q^{\ell})\tilde{g}(q)\right) = \tilde{f}(q) U_{\ell}\left(\tilde{g}(q)\right)$;
\item $\ell\cdot U_{\ell}\left(\tilde{f}\right) = \sum_{r=0}^{\ell-1} \tilde{f}\left( \zeta^rq^{1/\ell} \right)$.
\end{enumerate}
\end{lemma}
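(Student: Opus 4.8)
The plan is to verify each of the three properties directly from Definition \ref{defU3} (equivalently from \eqref{defU3}), treating the series involved as formal $q$-expansions, or equivalently as $q$-series converging on $\mathbb{H}$ under the stated conventions. Throughout I would extend the coefficient sequences $a(m)$ and $b(m)$ by zero for $m<M$ and $m<N$ respectively, so that every sum may be taken over all $m\in\mathbb{Z}$ without comment. Part (1) is then immediate: $\alpha f + g = \sum_m\left(\alpha a(m) + b(m)\right)q^m$, and extracting the subseries of exponents divisible by $\ell$ gives $U_\ell(\alpha f + g) = \sum_m\left(\alpha a(\ell m) + b(\ell m)\right)q^m = \alpha\, U_\ell(f) + U_\ell(g)$.

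For part (2) I would expand $f(q^\ell)g(q) = \sum_{j,k} a(j)b(k)\,q^{\ell j + k}$, so that the coefficient of $q^n$ is $\sum_{\ell j + k = n} a(j)b(k)$. Applying $U_\ell$ restricts to $n = \ell m$, which forces $\ell \mid k$, say $k = \ell i$, and hence $j + i = m$; thus the coefficient of $q^m$ in $U_\ell\left(f(q^\ell)g(q)\right)$ is $\sum_{j+i=m} a(j)b(\ell i)$. On the other hand $U_\ell(g(q)) = \sum_i b(\ell i)q^i$, so $f(q)\,U_\ell(g(q)) = \sum_{j,i} a(j)b(\ell i)\,q^{j+i}$ has exactly the same coefficient of $q^m$. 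Hence the two series coincide. The only point needing any care is the bookkeeping of exponents: the content of the identity is precisely that multiplication by the $\ell$-th power $f(q^\ell)$ passes through $U_\ell$ as multiplication by $f(q)$.

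For part (3) I would invoke the orthogonality relation for a primitive $\ell$-th root of unity $\zeta$, namely $\sum_{r=0}^{\ell-1}\zeta^{rm} = \ell$ when $\ell \mid m$ and $0$ otherwise (a finite geometric sum, since $\zeta^m$ is an $\ell$-th root of unity that is $1$ exactly when $\ell \mid m$). With the convention $q^{1/\ell} = e^{2\pi i \tau/\ell}$ one has $f\left(\zeta^r q^{1/\ell}\right) = \sum_m a(m)\zeta^{rm}q^{m/\ell}$, and summing over $0 \le r \le \ell-1$ gives $\sum_{r} f\left(\zeta^r q^{1/\ell}\right) = \sum_m a(m)q^{m/\ell}\sum_{r=0}^{\ell-1}\zeta^{rm} = \ell\sum_{\ell\mid m} a(m)q^{m/\ell} = \ell\sum_{m'} a(\ell m')q^{m'} = \ell\cdot U_\ell(f)$. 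Since the paper specializes to $\ell = 5$, the outer sum is $\sum_{r=0}^{4}$, exactly as in the statement; the interchange of the two summations is legitimate because for each fixed power of $q^{1/\ell}$ only a single term of the $m$-sum contributes.

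There is no genuine obstacle here: the lemma is elementary, and the only mildly delicate steps are the index arithmetic in part (2) and the justification that the rearrangements in part (3) are valid (which follows from the local finiteness just noted, together with holomorphy on $\mathbb{H}$). Alternatively, all three identities may simply be quoted from \cite[Chapter 10]{Andrews} and \cite[Chapter 8]{Knopp}, as already indicated in the text.
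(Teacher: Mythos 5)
Your proof is correct, and it is exactly the standard argument (coefficient extraction for (1) and (2), orthogonality of $\ell$-th roots of unity for (3)) that the paper itself does not spell out but delegates to \cite[Chapter 10]{Andrews} and \cite[Chapter 8]{Knopp}. The only remark worth adding is that the upper limit $r=4$ in the statement is just the specialization $\ell=5$ used throughout the paper (in general it should read $\sum_{r=0}^{\ell-1}$), which your write-up already handles correctly.
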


We also give an important result from \cite[Lemma 7]{AtkinL}, which we will use in Section \ref{initialrelationssection}:

\begin{theorem}\label{uelleffectsmod}
For any $N\in\mathbb{Z}_{\ge 1}$ with $\ell | N$ and $f\in\mathcal{M}\left( \Gamma_0(N) \right)$,
\begin{align*}
U_{\ell}\left(f \right)&\in\mathcal{M}\left( \Gamma_0(N) \right).
\end{align*}  Moreover, if $\ell^2 | N$, then
\begin{align*}
U_{\ell}\left(f \right)&\in\mathcal{M}\left( \Gamma_0(N/\ell) \right).
\end{align*}
\end{theorem}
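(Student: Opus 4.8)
The plan is to realise $U_{\ell}$ as an averaging operator over a finite family of determinant‑$\ell$ matrices, and to show that this family is stable, modulo left multiplication by $\Gamma_0(N)$, under right multiplication by the relevant congruence group; the conditions at the cusps will then come almost for free. First I would rewrite $U_{\ell}(f)$ using part (3) of Lemma~\ref{impUprop} (whose statement for a prime $\ell$ has the sum run over $r=0,\dots,\ell-1$): with the convention $q^{1/\ell}=e^{2\pi i\tau/\ell}$ one obtains
\begin{align*}
\ell\cdot U_{\ell}(f)(\tau)=\sum_{r=0}^{\ell-1}f\!\left(\frac{\tau+r}{\ell}\right)=\sum_{r=0}^{\ell-1}f(A_{r}\tau),\qquad A_{r}:=\begin{pmatrix}1 & r\\ 0 & \ell\end{pmatrix},
\end{align*}
where $A_{r}\tau=(\tau+r)/\ell$ denotes the usual action of a matrix of positive determinant. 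Since $A_{r}\tau\in\mathbb{H}$ whenever $\tau\in\mathbb{H}$ and $f$ is holomorphic on $\mathbb{H}$, holomorphy of $U_{\ell}(f)$ on $\mathbb{H}$ is immediate, so the entire content of the theorem is the $\Gamma_0$‑transformation law together with meromorphy at the cusps.

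For the transformation law I would fix $\gamma=\left(\begin{smallmatrix}a & b\\ c & d\end{smallmatrix}\right)\in\Gamma_0(N)$ and prove that for each $r$ there is a unique $s=\sigma(r)\in\{0,\dots,\ell-1\}$ with $A_{r}\gamma=\gamma_{r}A_{\sigma(r)}$, where
\begin{align*}
A_{r}\gamma=\begin{pmatrix}a+rc & b+rd\\ \ell c & \ell d\end{pmatrix},\qquad \gamma_{r}=\begin{pmatrix}a+rc & \big(b+rd-(a+rc)s\big)/\ell\\ \ell c & d-cs\end{pmatrix},
\end{align*}
and such that $\gamma_{r}\in\Gamma_0(N)$ and $r\mapsto\sigma(r)$ is a bijection of $\mathbb{Z}/\ell\mathbb{Z}$. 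Comparing entries forces exactly this $\gamma_{r}$; it has determinant $1$ automatically (both sides have determinant $\ell$), its lower‑left entry $\ell c$ is divisible by $N$ because $N\mid c$, and its upper‑right entry is an integer precisely when $(a+rc)s\equiv b+rd\pmod{\ell}$. Here the hypothesis $\ell\mid N$ is the crux: it gives $\ell\mid c$, hence $a+rc\equiv a\pmod{\ell}$, and then $ad-bc=1$ forces $ad\equiv1\pmod{\ell}$, so $\gcd(a+rc,\ell)=\gcd(a,\ell)=1$ and $s\equiv a^{-1}(b+rd)\pmod{\ell}$ is determined uniquely; since also $\gcd(d,\ell)=1$, the assignment $r\mapsto\sigma(r)$ is a bijection of $\mathbb{Z}/\ell\mathbb{Z}$. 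Consequently $f(A_{r}\gamma\tau)=f\big(\gamma_{r}A_{\sigma(r)}\tau\big)=f(A_{\sigma(r)}\tau)$ by the $\Gamma_0(N)$‑invariance of $f$, and summing over $r$ and re‑indexing by $\sigma$ gives $U_{\ell}(f)(\gamma\tau)=U_{\ell}(f)(\tau)$.

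Next I would verify the cusp condition of Definition~\ref{DefnModular}(2). For $\rho\in\mathrm{SL}(2,\mathbb{Z})$ and each $r$, writing $A_{r}\rho=\rho_{r}'B_{r}$ with $\rho_{r}'\in\mathrm{SL}(2,\mathbb{Z})$ and $B_{r}$ upper triangular of determinant $\ell$ (Hermite normal form), the expansion of $f$ at the cusp $\rho_{r}'\infty$ is a Fourier series in a fractional power of $q$ with only finitely many negative‑exponent terms (since $f\in\mathcal{M}(\Gamma_0(N))$), and composing with the triangular $B_{r}$ preserves this shape. Hence each $f|_{A_{r}}$, and therefore $U_{\ell}(f)$, is meromorphic at every cusp; combined with holomorphy on $\mathbb{H}$ and $\Gamma_0(N)$‑invariance this exhibits $U_{\ell}(f)$ as a meromorphic function on the compact Riemann surface $\mathrm{X}_0(N)$ with poles only at the cusps, so $U_{\ell}(f)\in\mathcal{M}(\Gamma_0(N))$.

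Finally, for the ``moreover'' part I would rerun the invariance computation with $\gamma\in\Gamma_0(N/\ell)$: now $\tfrac{N}{\ell}\mid c$, and because $\ell^{2}\mid N$ we still have $\ell\mid\tfrac{N}{\ell}\mid c$, so the divisibility argument goes through unchanged, giving $\gcd(a+rc,\ell)=1$; moreover $N\mid\ell c$, so $\gamma_{r}$ still lies in $\Gamma_0(N)$ and $f|_{\gamma_{r}}=f$. Thus $U_{\ell}(f)$ is invariant under the larger group $\Gamma_0(N/\ell)$, and the cusp check is identical over its coarser set of cusps. I expect the main obstacle to be exactly the permutation claim for $\{A_{0},\dots,A_{\ell-1}\}$: this is precisely where $\ell\mid N$ (respectively $\ell^{2}\mid N$) is indispensable, since without $\ell\mid c$ the matrix $A_{r}\gamma$ may fall outside $\Gamma_0(N)\cdot\{A_{0},\dots,A_{\ell-1}\}$ and one would be forced to enlarge the representative set, breaking the clean statement.
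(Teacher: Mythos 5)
Your proof is correct: the permutation argument for the cosets $\Gamma_0(N)A_r$ under right multiplication by $\gamma$, with $\ell\mid c$ supplying the invertibility of $a$ modulo $\ell$, is exactly the standard Atkin--Lehner argument, and the paper itself does not prove this statement but cites it directly from Atkin--Lehner (Lemma 7 of \cite{AtkinL}), whose proof proceeds along the same lines. The only step you compress is that the expansion at a cusp $[a/c]_N$ lands in powers of $q^{\gcd(c^2,N)/N}$ as required by Definition \ref{DefnModular}; this follows from the $\Gamma_0(N)$-invariance you have already established (invariance under the parabolic stabilizer of the cusp forces the Fourier expansion into powers of $q^{1/h}$ with $h$ the cusp width), so no gap remains.
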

\section{Proofs of Theorems \ref{2A}-\ref{3}}\label{pfthm}
Throughout this section, for a formal power series, say $S(q):=\sum_{n=0}^{\infty}s(n)q^n$, for $A\in \mathbb{Z}_{\ge 2}$, we define $A$-dissection of $S(q)$ as
$$S(q)=\sum_{m=0}^{A-1}q^m S_{m}(q^A)\ \text{with}\ S_{m}(q):=\sum_{n= 0}^{\infty}s(An+m)q^{n}.$$
\begin{proof}[Proof of Theorem \ref{2A}]
	Recall that
	\begin{eqnarray}\label{thm2Aeqn1}
	\sum_{n=0}^{\infty}d_1(n)q^n=\dfrac{(q^2;q^2)_\infty}{(q;q)^4_\infty}=\frac{(q;q)_{\infty}(q^2;q^2)_{\infty}}{(q;q)^5_{\infty}}&=&\xi(-q)\xi(-q^2)\left(\dfrac{1}{(q^5;q^5)_\infty}+5 F^*(q)\right)\nonumber\\
	&=:&\dfrac{1}{(q^5;q^5)_\infty}\xi(-q)\xi(-q^2)+5 \widehat{F}(q)
	\end{eqnarray}
where in the third step, we used $(q;q)^5_{\infty}\equiv(q^5;q^5)_{\infty}\ \left(\text{mod}\ 5\right)$ by Binomial theorem, written in the following form:
$$\frac{1}{(q;q)^5_{\infty}}=\frac{1}{(q^5;q^5)_{\infty}}+5F^*(q)\ \text{and}\ \widehat{F}(q)=\xi(-q)\xi(-q^2)F^*(q).$$
 From \eqref{dissect1} with $q\mapsto q^2$, we obtain
	\begin{equation}\label{thm2Aenq2}
	\xi(-q^2)= \xi(-q^{50}) \Biggl(T(q^{10})-q^2-\dfrac{q^4}{T(q^{10})}\Biggr).
	\end{equation}
Combining \eqref{dissect1} and \eqref{thm2Aenq2} together, we get the following $5$-dissection of $	\xi(-q)\xi(-q^2)$,
	\begin{equation*}
	\xi(-q)\xi(-q^2)=:\sum_{m=0}^{4}q^m P^{[1]}_{m}(q^5)
	\end{equation*}
with
\begin{align*}
P^{[1]}_{0}(q^5)&=\xi(-q^{25})\xi(-q^{50})\left(T(q^5)T(q^{10})+\frac{q^5}{T(q^{10})}\right),\\
P^{[1]}_{1}(q^5)&=-\xi(-q^{25})\xi(-q^{50})\left(T(q^{10})-\frac{q^5}{T(q^5)T(q^{10})}\right),\\
P^{[1]}_{2}(q^5)&=-\xi(-q^{25})\xi(-q^{50})\left(T(q^5)+\frac{1}{T(q^5)T(q^{10})}\right),\\
P^{[1]}_{3}(q^5)&=\xi(-q^{25})\xi(-q^{50}),\\
P^{[1]}_{4}(q^5)&=-\xi(-q^{25})\xi(-q^{50})\left(\frac{T(q^5)}{T(q^{10})}+\frac{1}{T(q^5)}\right).\\
\end{align*}	
So, in particular,
	\begin{equation}\label{thm2Aeqn3}
	q^3P^{[1]}_{3}(q^5)=q^3 \xi(-q^{25})\xi(-q^{50}).
	\end{equation}
Considering coefficients of $q^{5n+3}$ on both sides of \eqref{thm2Aeqn1}, it readily implies that
\begin{equation*}
\sum_{n=0}^{\infty}d_1(5n+3)q^{5n+3}= \dfrac{1}{(q^5;q^5)_\infty}q^3P^{[1]}_{3}(q^5)+5 q^3 \widehat{F}_3(q^5),
\end{equation*}	
and applying \eqref{thm2Aeqn3}, we get
	\begin{eqnarray}
	& & \sum_{n=0}^{\infty}d_1(5n+3)q^{5n+3}= \dfrac{1}{(q^5;q^5)_\infty}q^3 \xi(-q^{25})\xi(-q^{50})+5 q^3 \widehat{F}_3(q^5).\nonumber
	\end{eqnarray}
Therefore, canceling the factor $q^3$ and then re-scaling $q^5\mapsto q$, we obtain
\begin{equation}\label{thm2Aeqn4}
\sum_{n=0}^{\infty}d_1(5n+3)q^{n} = \dfrac{1}{(q;q)_\infty} \xi(-q^{5})\xi(-q^{10})+5 \widehat{F}_3(q).
\end{equation}
	Now, setting 
	$$\dfrac{1}{(q;q)_\infty}:=\sum_{m=0}^{4}q^m P^{[2]}_{m}(q^5)$$
	and comparing the coefficients of $q^{5n+4}$ from the $5$-dissection of $(q;q)^{-1}_{\infty}$ in \eqref{dissect2} gives
	\begin{equation}\label{thm2Aeqn5}
 q^4 P^{[2]}_{4}(q^5)=5q^4\dfrac{(q^{25};q^{25})^{5}_{\infty}}{(q^5;q^5)^{6}_{\infty}}.
	\end{equation}
Next, comparing coefficients of $q^{5n+4}$ in \eqref{thm2Aeqn4}, it turns out that
\begin{equation}\label{neweqn1}
\sum_{n=0}^{\infty}d_1\left(5(5n+4)+3\right)q^{5n+4}=\sum_{n=0}^{\infty}d_1\left(25n+23\right)q^{5n+4} = q^4P^{[2]}_{4}(q^5) \xi(-q^{5})\xi(-q^{10})+5q^4\widehat{F}_{3,4}(q^5),
\end{equation}
where $\widehat{F}_{3,4}(q^5)$ comes from the following $5$-dissection of $\widehat{F}_3(q)$:
$$\widehat{F}_3(q)=:\sum_{m=0}^{4}q^m \widehat{F}_{3,m}(q^5).$$
Consequently from \eqref{thm2Aeqn5} and \eqref{neweqn1}, it readily implies that
	\begin{equation}\label{thm2Aeqn6}
	d_1(25n+23) \equiv 0\ (\text{mod}\ 5).
	\end{equation}
	Finally, \eqref{2Aeqn1} follows from \eqref{thm2Aeqn6} and the following observation: for $k \geq 0$,
	\begin{eqnarray}
	\sum_{n=0}^{\infty}d_{25k+1}(n)q^n&=&\dfrac{(q^2;q^2)^{25k+1}_\infty}{(q;q)^{75k+4}_\infty}=\Bigl(\sum_{n=0}^{\infty}d_1(n)q^n\Bigr)\dfrac{\left((q^2;q^2)^{5}_\infty\right)^{5k}}{\left((q;q)^{5}_\infty\right)^{15k}}\nonumber\\
	&\equiv& \Bigl(\sum_{n=0}^{\infty}d_1(n)q^n\Bigr)\dfrac{(q^{10};q^{10})^{5k}_\infty}{(q^5;q^5)^{15k}_\infty}\ (\text{mod}\ 5)\nonumber,
	\end{eqnarray}
where in the last line using Binomial theorem, we used the fact that $(q^{\alpha};q^{\alpha})^{5\beta}_{\infty}\equiv (q^{5\alpha};q^{5\alpha})^{\beta}_{\infty}\ (\text{mod}\ 5)$ for $(\alpha,\beta)\in \mathbb{N}^2$. This concludes the proof of \eqref{2Aeqn1}.
	
	Now, for all $k \geq 0$,
	\begin{eqnarray}\label{thm2Aeqn7}
	\sum_{n=0}^{\infty}d_{75k+16}(n)q^n &=& \dfrac{(q^2;q^2)^{75k+16}_\infty}{(q;q)^{225k+49}_\infty}\nonumber\\
	&\equiv& \dfrac{(q^{10};q^{10})^{15k+3}_\infty}{(q^5;q^5)^{45k+10}_\infty}\xi(-q)\xi(-q^2)\ (\text{mod}\ 5).
	\end{eqnarray}
	Using \eqref{thm2Aeqn3}, we see that
	\begin{eqnarray}\label{thm2Aeqn8}
	\sum_{n=0}^{\infty}d_{75k+16}(5n+3)q^{n} &\equiv& \dfrac{(q^2;q^2)^{15k+3}_\infty}{(q;q)^{45k+10}_\infty}\xi(-q^{5})\xi(-q^{10})\ (\text{mod}\ 5)\nonumber\\
	&\equiv&\dfrac{(q^{10};q^{10})^{3k}_\infty}{(q^5;q^5)^{9k+1}_\infty}\xi(-q^{5})\xi(-q^{10}) (q^2;q^2)^3_\infty\ (\text{mod}\ 5).
	\end{eqnarray}
	Applying the substitution $q\mapsto q^2$ into \eqref{Jacobi}, it follows that the coefficients of $q^{5n+1}$ in $(q^2;q^2)^3_\infty$ is divisible by $5$ because $n(n+1)=5m+1$ if and only if $n\equiv 2\ (\text{mod}\ 5)$ and so, $2n+1\equiv 0 \left(\text{mod}\ 5\right)$ and therefore from \eqref{thm2Aeqn8} with shifting $n\mapsto 5n+1$, we conclude that
	\begin{equation*}
	d_{75k+16}(25n+8) \equiv 0\ (\text{mod}\ 5).   
	\end{equation*}
\end{proof}
\begin{proof}[Proof of Theorem \ref{1}]
	For $k \in \{3,5,8,10,13,15,18,20,23\}$, \eqref{eq1} is an immediate consequence of Corollary $14$ and $15$ of \cite{AndrewsPaule} and Corollary $4.5$ of \cite{dasilvaet}. Equation \eqref{eq1} for $k\in \{1,16\}$ is a special instance of \eqref{2Aeqn1} and \eqref{2Aeqn2}. For the proof of \eqref{eq2}, one can use the Mathematica package RaduRK developed by the second author \cite{Smoot1}.
\end{proof}
\begin{proof}[Proof of Theorem \ref{3}]
From the definition of $d_k(n)$ for $k=2$, we have:
\begin{eqnarray}\label{thm3eqn1}
\sum_{n=0}^{\infty}d_2(n)q^n &=& \dfrac{(q^2;q^2)^2_\infty}{(q;q)^7_\infty}=\frac{1}{(q;q)^5_{\infty}}\dfrac{\psi^2(q)}{(q^2;q^2)^2_\infty}\nonumber\\
&\equiv& \dfrac{1}{(q^5;q^5)_\infty}\dfrac{\psi^2(q)}{(q^2;q^2)^2_\infty}\ (\text{mod}\ 5)\nonumber\\
&\equiv& \dfrac{1}{(q^5;q^5)_\infty}\dfrac{1}{(q^2;q^2)^2_\infty}\left(\dfrac{(q^2;q^2)_\infty(q^5;q^5)^3_\infty}{(q;q)_\infty (q^{10};q^{10})_\infty}+q\dfrac{(q^{10};q^{10})_\infty}{(q^5;q^5)^2_\infty}\right)\ (\text{mod}\ 5)\ (\text{by}\ \eqref{lemma0eqn2})\nonumber\\
&\equiv& \left(\dfrac{(q^5;q^5)^2_\infty}{(q^{10};q^{10})_\infty}\dfrac{1}{(q;q)_\infty(q^2;q^2)_\infty}+q \dfrac{(q^{10};q^{10})^3_\infty}{(q^{5};q^{5})^3_\infty}(q^2;q^2)^3_\infty\right) (\text{mod}\ 5)\nonumber\\
&=:&\dfrac{(q^5;q^5)^2_\infty}{(q^{10};q^{10})_\infty}\left(\sum_{n=0}^{\infty}a(n)q^n\right)+q \dfrac{(q^{10};q^{10})^3_\infty}{(q^{5};q^{5})^3_\infty}(q^2;q^2)^3_\infty+5 H(q),
\end{eqnarray}
where, $H(q)$ is a formal power series with integral coefficients\footnote{Explicit information on $H(q)$ is not necessary for the proof}. Applying \eqref{Jacobi} with the substitution $q\mapsto q^2$, we get
\begin{equation*}
J(q):=(q^2;q^2)^3_\infty=\sum_{n=0}^{\infty}(-1)^n (2n+1) q^{n(n+1)}.
\end{equation*}
Next, observe that the coefficients of $q^{5m+1}$ in $(q^2;q^2)^3_\infty$ is divisible by $5$ because $n(n+1)=5m+1$ if and only if $n\equiv 2\ (\text{mod}\ 5)$ and so, $2n+1\equiv 0 \left(\text{mod}\ 5\right)$. Therefore, following the $5$-dissection of $J(q)$, we can write the component of $J(q)$ consisting of powers of $q$ that are congruent to $1(\text{mod}\ 5)$ as
\begin{equation}\label{thm3eqn2}
q J_1(q^5)=5 qJ^*(q^5)\ \text{where}\ J(q)=:\sum_{m=0}^{4}q^m J_m(q^5).
\end{equation}
Comparing coefficients on both sides of $q^{5n+2}$ in \eqref{thm3eqn1} and then using \eqref{thm3eqn2}, it follows that
\begin{equation*}
\sum_{n=0}^{\infty}d_2(5n+2)q^{5n+2}= \dfrac{(q^5;q^5)^2_\infty}{(q^{10};q^{10})_\infty}\left(\sum_{n=0}^{\infty}a(5n+2)q^{5n+2}\right)+5q^2\dfrac{(q^{10};q^{10})^3_\infty}{(q^{5};q^{5})^3_\infty}J^*(q^5)+5q^2H_2(q^5)
\end{equation*}
with $H_2(q^5)$ comes from the $5$-dissection of $H(q)$. Therefore, canceling the factor $q^3$ and then re-scaling $q^5\mapsto q$ gives
\begin{eqnarray}\label{thm3eqn3}
\sum_{n=0}^{\infty}d_2(5n+2)q^{n} &=& \dfrac{(q;q)^2_\infty}{(q^2;q^2)_\infty}\sum_{n=0}^{\infty}a(5n+2)q^n+5\dfrac{(q^{2};q^{2})^3_\infty}{(q;q)^3_\infty}J^*(q)+5H_2(q) \nonumber\\
&=& -2 (q;q)^5_\infty(q^2;q^2)^2_\infty+5\dfrac{(q;q)^2_\infty}{(q^2;q^2)_\infty} G(q)+5\dfrac{(q^{2};q^{2})^3_\infty}{(q;q)^3_\infty}J^*(q)+5H_2(q)\ (\text{by}\ \eqref{lemma3eqn1})\nonumber\\
&=:& -2 (q^5;q^5)_\infty \psi(q) \xi(-q)+5 G^*(q),
\end{eqnarray}
where in the second step we used \eqref{lemma3eqn1} in the following form
$\sum_{n=0}^{\infty}a(5n+2)q^n= -2 (q;q)^3_\infty(q^2;q^2)^3_\infty+5G(q)$, and in the last step we used $(q;q)^5_{\infty}\equiv (q^5;q^5)_{\infty}\ \left(\text{mod}\ 5\right)$ and defined the residual formal $q$-series as $G^*(q)$. 

Now, from \eqref{dissect1}, we write the $5$-dissection of $\xi(-q)$ as 
$$\xi(-q)=:\sum_{m=0}^{4}q^m \xi_m(q^5)$$
with 
\begin{align}\label{thm3eqn4}
\xi_0(q^5)&=(q^{25};q^{25})_{\infty}T(q^5), \xi_{1}(q^5)=-(q^{25};q^{25})_{\infty}, \xi_{2}(q^5)=-\frac{(q^{25};q^{25})_{\infty}}{T(q^5)T(q^{10})}, \xi_{3}(q^5)=\xi_4(q^5)=0.
\end{align}
Similarly, applying the substitution $q\mapsto q^5$ into \eqref{lemma0eqn1} gives
$$\psi(q)=:\sum_{m=0}^{4}q^m \psi_m(q^5)$$
with 
\begin{align}\label{thm3eqn5}
\psi_0(q^5)&=f_R(q^{10};q^{15}), \psi_{1}(q^5)=f_R(q^{5};q^{20}), \psi_{3}(q^5)=\psi(q^{25}), \psi_{2}(q^5)=\xi_4(q^5)=0.
\end{align}
Therefore, setting
$$\psi(q)\xi(-q)=:\sum_{m=0}^{4}q^mP^{[3]}_m(q^5),$$
and applying \eqref{thm3eqn4} and \eqref{thm3eqn5}, we see that
\begin{equation}\label{neweqn2}
P^{[3]}_4(q^5)=\psi_{3}(q^5)\xi_1(q^5)=-\psi(q^{25}) \xi(-q^{25}).
\end{equation}
Extracting the power series with coefficients of $q^{5n+4}$ on both sides of \eqref{thm3eqn3} and then with \eqref{neweqn2}, we have
\begin{equation*}
\sum_{n=0}^{\infty}d_2(5(5n+4)+2)q^{5n+4}=\sum_{n=0}^{\infty}d_2(25n+22)q^{5n+4}
\equiv 2 q^4(q^5;q^5)_\infty \psi(q^{25}) \xi(-q^{25}) \ (\text{mod}\ 5),
\end{equation*}
and therefore,
\begin{equation}\label{thm3eqn6}
\sum_{n=0}^{\infty}d_2(25n+22)q^{n}
\equiv 2 \xi(-q)\psi(q^5) \xi(-q^5) \ (\text{mod}\ 5).
\end{equation}
By induction, one can conclude that for all $\alpha \in \mathbb{Z}_{\geq 1}$,
\begin{equation}\label{thm3eqn7}
\sum_{n=0}^{\infty}d_2\Biggl(5^{2\alpha}n-\dfrac{1}{8}(5^{2\alpha}-1)+5^{2\alpha}\Biggr)q^{n}
\equiv 2 \xi(-q)\psi(q^5) \xi(-q^5) \ (\text{mod}\ 5).
\end{equation}
Considering the power series with coefficients of $q^{5n+t}$ with $t=\{3,4\}$ on both sides of \eqref{thm3eqn7} yields
$$\sum_{n=0}^{\infty}d_2\Biggl(5^{2\alpha}(5n+t)-\dfrac{1}{8}(5^{2\alpha}-1)+5^{2\alpha}\Biggr)q^{5n+t}
\equiv 2 q^t\xi_t(q^5)\psi(q^5) \xi(-q^5)\ (\text{mod}\ 5).$$
Moreover, with \eqref{thm3eqn4}, it immediately gives
\begin{equation}\label{thm3eqn8}
\sum_{n=0}^{\infty}d_2\Biggl(5^{2\alpha}(5n+t)-\dfrac{1}{8}(5^{2\alpha}-1)+5^{2\alpha}\Biggr)q^{5n+t}
\equiv 0\  (\text{mod}\ 5).
\end{equation}
Applying \eqref{thm3eqn8} into \eqref{thm3eqn7} for $t=\{3,4\}$, we get
\begin{equation}\label{thm3eqn9}
d_{2}\Bigl(5^{2\alpha+1}n-\dfrac{1}{8}(5^{2\alpha}-1)+4\times 5^{2\alpha}\Bigr)\equiv d_{2}\Bigl(5^{2\alpha+1}n-\dfrac{1}{8}(5^{2\alpha}-1)+5\times 5^{2\alpha}\Bigr)\equiv 0\ (\text{mod}\ 5).
\end{equation}
Finally, we see that for all $k \geq 0$,
\begin{eqnarray}\label{thm3eqn10}
\sum_{n=0}^{\infty}d_{125k+2}(n)q^n &=& \dfrac{(q^2;q^2)^2_\infty}{(q;q)^7_\infty}\dfrac{(q^2;q^2)^{125k}_\infty}{(q;q)^{375k}_\infty}\nonumber\\
&\equiv& \Bigl(\sum_{n=0}^{\infty}d_2(n)q^n\Bigr)\dfrac{(q^{10};q^{10})^{25k}_\infty}{(q^5;q^5)^{75k}_\infty}\ (\text{mod}\ 5).
\end{eqnarray}
Equations \eqref{thm3eqn9} and \eqref{thm3eqn10} together imply \eqref{3eqn2}.
\end{proof}
\section{Setup for Proof of Theorem \ref{Mythm}}\label{proofsetup}
The remainder of our paper is dedicated to the proof of Theorem \ref{Thm12}.  The initial setup is standard to the theory.  We define the function
\begin{align}
\mathcal{A} &:= q^6\frac{D_5(q)}{D_5(q^{25})} = q^6\frac{(q^2;q^2)^5_{\infty}(q^{25};q^{25})^{16}_{\infty}}{(q;q)^{16}_{\infty}(q^{50};q^{50})^5_{\infty}}.\label{Zdef}
\end{align}  Using this function together with the $U_5$ operator, we can construct a sequence of linear operators:
\begin{align}
U^{(1)}(f) :=& U_5(f),\\
U^{(0)}(f) :=& U_5(\mathcal{A}\cdot f),\\
U^{(\alpha)}(f) :=& U^{(i)}(f),\ \alpha\equiv i\pmod{2}.
\end{align}  We next define our function sequence $\mathcal{L}:=\left( L_{\alpha} \right)_{\alpha\ge 1}$:
\begin{align}
L_0 &:= 1,\label{L0d}\\
L_{2\alpha-1}(\tau) &= \frac{(q^5;q^5)^{16}_{\infty}}{(q^{10};q^{10})^5_{\infty}}\cdot\sum_{n=0}^{\infty} d_5(5^{2\alpha-1}n + \lambda_{2\alpha-1})q^{n+2},\label{Lod}\\
L_{2\alpha}(\tau) &= \frac{(q;q)^{16}_{\infty}}{(q^2;q^2)^5_{\infty}}\cdot\sum_{n=0}^{\infty} d_5(5^{2\alpha}n + \lambda_{2\alpha})q^{n+1}.\label{Led}
\end{align}  We define $\lambda_{\alpha}$ as
\begin{align}
\lambda_{\alpha} &:= \frac{1+5^{\alpha}\cdot 3}{4},\label{lamodef}.
\end{align} and it can easily be shown that $y=\lambda_{\alpha}$ is the minimal positive integral solution to
\begin{align*}
4y\equiv 1\pmod{5^{\alpha}}.
\end{align*}  If we apply $U^{(0)}$ to the identity, we get
\begin{align}
U^{(0)}(1)&= U_5\left( q^6\frac{D_5(q)}{D_2(q^{25})} \right) = \frac{1}{D_2(q^5)}\cdot U_5\left( q^6 D_5(q) \right) = \frac{1}{D_5(q^5)}\cdot U_5\left( \sum_{n\ge 6} d_5(n-6)q^n \right)\label{isittruethough}\\
&=\frac{1}{D_5(q^5)}\cdot \sum_{5n\ge 6}d_5(5n-6)q^n =\frac{1}{D_5(q^5)}\cdot \sum_{n=0}^{\infty}d_5(5n+4)q^{n+2}\label{isittruethougha}\\
&= L_1.\label{isittruethoughb}
\end{align}  More generally, we state the following:
\begin{lemma}
For all $\alpha\ge 0$, we have
\begin{align}
L_{\alpha+1} &= U^{(\alpha)}\left( L_{\alpha} \right).\label{Lete}
\end{align}
\end{lemma}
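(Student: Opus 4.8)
The plan is to prove the identity $L_{\alpha+1} = U^{(\alpha)}(L_\alpha)$ by reducing everything to the single verification already carried out in equations (\ref{isittruethough})--(\ref{isittruethoughb}), namely $U^{(0)}(1)=L_1$, and then handling the remaining cases by the same bookkeeping with the eta-quotient prefactors tracked carefully. The argument splits according to the parity of $\alpha$, since $U^{(\alpha)}$ is $U^{(0)}$ or $U^{(1)}$ accordingly, and since the shape of $L_\alpha$ alternates between (\ref{Lod}) and (\ref{Led}). The base case $\alpha=0$ is exactly (\ref{isittruethoughb}): $L_0=1$ and $U^{(0)}(1)=L_1$.

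For the even case, write $\alpha = 2a$ with $a\ge 1$, so $L_{2a}$ has the form (\ref{Led}) and we must apply $U^{(0)}(f)=U_5(\mathcal A\cdot f)$. First I would observe that $\mathcal A = q^6 D_5(q)/D_2(q^{25})$, and that the prefactor $(q;q)^{16}_\infty/(q^2;q^2)^5_\infty$ appearing in $L_{2a}$ is precisely $D_2(q)^{?}$... more usefully, $\mathcal A\cdot L_{2a}$ rearranges so that the $D_2(q^{25})$ in the denominator of $\mathcal A$ cancels against the $(q^{25};q^{25})/(q^{50};q^{50})$-type factor one obtains after pulling the prefactor of $L_{2a}$ through. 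Concretely, $\mathcal A\cdot L_{2a} = \dfrac{1}{D_2(q^{25})}\cdot q^6 D_5(q)\cdot \sum_n d_5(5^{2a}n+\lambda_{2a})q^{n+1}$ up to the eta-quotient manipulation, and by Lemma \ref{impUprop}(2) the factor $1/D_2(q^{25})$ (which is a power series in $q^{25}$, hence in $q^5$) pulls out of $U_5$ as $1/D_2(q^5)$. What remains inside is $U_5$ applied to a power series whose coefficient of $q^m$ is a $d_5$-value; extracting the arithmetic progression $5n \mapsto n$ shifts the modulus $5^{2a}\to 5^{2a+1}$ and updates the residue, and one checks $5\lambda_{2a}+ (\text{shift}) \equiv \lambda_{2a+1}$, i.e. that the new residue matches (\ref{lamodef}) with $\alpha\to 2a+1$; this is the same $\lambda_\alpha$-recursion computation as the classical case, using $5\cdot\frac{1+5\cdot 3^{2a}}{4} = \frac{5+25\cdot 3^{2a}}{4}$ and the fact that $\lambda_{2a+1}=\frac{1+5\cdot 3^{2a+1}}{4}$ solves $4y\equiv 1\pmod{5^{2a+1}}$. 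After reindexing the exponent shift, the $1/D_2(q^5)$ together with the leftover $(q^{25};q^{25})$-factors reassemble into the prefactor $(q^5;q^5)^{16}_\infty/(q^{10};q^{10})^5_\infty$ of $L_{2a+1}$, giving $L_{2a+1}=U^{(0)}(L_{2a})$.

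For the odd case $\alpha = 2a-1$ with $a\ge 1$, $L_{2a-1}$ has the form (\ref{Lod}) and we apply $U^{(1)}(f)=U_5(f)$ directly. Here the prefactor $(q^5;q^5)^{16}_\infty/(q^{10};q^{10})^5_\infty$ is a power series in $q^5$, so by Lemma \ref{impUprop}(2) it passes straight through $U_5$ to become $(q;q)^{16}_\infty/(q^2;q^2)^5_\infty$, which is exactly the prefactor of $L_{2a}$; and $U_5$ applied to $\sum_n d_5(5^{2a-1}n+\lambda_{2a-1})q^{n+2}$ extracts the subprogression, sending $5^{2a-1}\to 5^{2a}$ in the modulus and, after the same $\lambda$-recursion check ($5\lambda_{2a-1}+\text{shift}\equiv\lambda_{2a}$), producing $\sum_n d_5(5^{2a}n+\lambda_{2a})q^{n+1}$. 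Care is needed with the exponent shift: the $q^{n+2}$ inside $L_{2a-1}$ means we are extracting coefficients of $q^{5m}$ (shifted), and one verifies the floor/index arithmetic lands on $q^{n+1}$ as required by (\ref{Led}); this is a routine index chase identical in spirit to the passage from (\ref{isittruethough}) to (\ref{isittruethougha}).

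The main obstacle is purely bookkeeping rather than conceptual: keeping the eta-quotient prefactors and the powers of $q$ aligned so that Lemma \ref{impUprop}(2) can be invoked legitimately (i.e. genuinely isolating a series in $q^5$ before applying $U_5$), and verifying the residue recursion $\lambda_{\alpha+1}\equiv 5\lambda_\alpha + c_\alpha\pmod{5^{\alpha+1}}$ with the correct constant $c_\alpha$ coming from the fixed exponent offsets ($6$ in $\mathcal A$, $+1$ or $+2$ in the definitions of $L_\alpha$). Once the even and odd inductive steps are checked against the base case (\ref{isittruethoughb}), the identity (\ref{Lete}) follows for all $\alpha\ge 0$.
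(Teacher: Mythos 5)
Your route is the paper's route: split by parity, use Lemma \ref{impUprop}(2) to pull the $q^5$-power-series prefactor through $U_5$, re-extract the progression, and check that the new residue is $\lambda_{\alpha+1}$; the odd case and the base case are handled exactly as in the paper (and no induction is actually needed, since each parity is a direct computation).

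The one step that needs repair is your even case, where the bookkeeping you assert is not the right one and, as written, would break the argument. The cancellation in $\mathcal{A}\cdot L_{2a}$ is not between $D_2(q^{25})$ and any $(q^{25};q^{25})$-type factor coming from $L_{2a}$ --- the definition \eqref{Led} contains no such factor. Rather, the factor $D_5(q)$ in $\mathcal{A}$ cancels the prefactor $(q;q)^{16}_{\infty}/(q^2;q^2)^5_{\infty}=1/D_5(q)$ of $L_{2a}$, leaving
\begin{align*}
\mathcal{A}\cdot L_{2a} \;=\; \frac{q^{6}}{D_5(q^{25})}\sum_{n\ge 0} d_5\!\left(5^{2a}n+\lambda_{2a}\right)q^{\,n+1}
\end{align*}
(the ``$D_2(q^{25})$'' in \eqref{Zdef} is a slip for $D_5(q^{25})$, as the explicit eta-quotient there shows). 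In your displayed expression the $D_5(q)$ is still sitting inside $U_5$, so the claim that ``what remains inside is a power series whose coefficient of $q^m$ is a $d_5$-value'' is false: those coefficients would be convolutions of $d_5$-values with the coefficients of $D_5(q)$, and the extraction would not yield $L_{2a+1}$. Once the correct cancellation is made, $1/D_5(q^{25})$ passes through $U_5$ as $1/D_5(q^5)=(q^5;q^5)^{16}_{\infty}/(q^{10};q^{10})^5_{\infty}$, which is already the entire prefactor of $L_{2a+1}$; there are no leftover $q^{25}$-factors to ``reassemble.'' Finally, the residue check is additive rather than of the form $5\lambda_\alpha+c$: the exponent offsets ($q^{n+2}$, resp.\ $q^{6}\cdot q^{n+1}$) give the exact relation $\lambda_{\alpha+1}=\lambda_{\alpha}+3\cdot 5^{\alpha}$ in both parities, which is precisely what the paper verifies by reindexing. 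With these corrections your argument coincides with the paper's proof.
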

\begin{proof}
\begin{align*}
U^{(2\alpha-1)}\left( L_{2\alpha-1} \right) &=U_5\left( L_{2\alpha-1} \right)\\ &= U_5\left( \frac{1}{D_5(q^5)} \sum_{n\ge 0} d_5\left(5^{2\alpha-1}n + \lambda_{2\alpha-1}\right)q^{n+2} \right)\\
&= \frac{1}{D_5(q)}\cdot U_5\left( \sum_{n\ge 2}d_5\left(5^{2\alpha-1}(n-2) + \lambda_{2\alpha-1}\right)q^{n} \right)\\
&= \frac{1}{D_5(q)}\cdot \sum_{5n\ge 2}d_5\left(5^{2\alpha-1}(5n-2) + \lambda_{2\alpha-1}\right)q^{n}\\
&= \frac{1}{D_5(q)}\cdot \sum_{n\ge 1}d_5\left(5^{2\alpha}n-2\cdot 5^{2\alpha-1} + \lambda_{2\alpha-1}\right)q^{n}\\
&= \frac{1}{D_5(q)}\cdot \sum_{n\ge 0}d_5\left(5^{2\alpha}n+5^{2\alpha}-2\cdot 5^{2\alpha-1} + \lambda_{2\alpha-1}\right)q^{n+1}\\
&= \frac{1}{D_5(q)}\cdot \sum_{n\ge 0}d_5\left(5^{2\alpha}n+\lambda_{2\alpha}\right)q^{n+1}.
\end{align*}
\begin{align*}
U^{(2\alpha)}\left( L_{2\alpha} \right) &=U_5\left( \mathcal{A}\cdot L_{2\alpha} \right)\\ &= U_5\left( q^6\frac{D_5(q)}{D_2(q^{25})}\frac{1}{D_5(q)} \sum_{n\ge 0} d_5\left(5^{2\alpha}n + \lambda_{2\alpha}\right)q^{n+1} \right)\\
&= \frac{1}{D_5(q^3)}\cdot U_5\left( \sum_{n\ge 7}d_5\left(5^{2\alpha}(n-7) + \lambda_{2\alpha}\right)q^{n+7} \right)\\
&= \frac{1}{D_5(q^3)}\cdot \sum_{5n\ge 7}d_5\left(5^{2\alpha}(5n-7) + \lambda_{2\alpha}\right)q^{n}\\
&= \frac{1}{D_5(q^3)}\cdot \sum_{n\ge 2}d_5\left(5^{2\alpha+1}n-7\cdot 5^{2\alpha} + \lambda_{2\alpha}\right)q^{n}\\
&= \frac{1}{D_5(q^3)}\cdot \sum_{n\ge 0}d_5\left(5^{2\alpha+1}(n+2)-7\cdot 5^{2\alpha} + \lambda_{2\alpha}\right)q^{n+2}\\
&= \frac{1}{D_5(q^3)}\cdot \sum_{n\ge 0}d_5\left(5^{2\alpha+1}n+\lambda_{2\alpha+1}\right)q^{n+2}.
\end{align*}
\end{proof}
\subsection{Our Hauptmodul}\label{TheModularEquations}
We now have our key function sequence $\mathcal{L}$ defined, as well as the means of constructing $L_{\alpha+1}$ from $L_{\alpha}$.  We now need a reference function with which to properly represent each $L_{\alpha}$.  Using Theorem \ref{orderboundmodfunc}, we can compute
\begin{align}
\mathrm{ord}_{\infty}^{(10)}(L_1) &\ge 1,\\
\mathrm{ord}_{1/3}^{(10)}(L_1) &\ge 5,\\
\mathrm{ord}_{1/2}^{(10)}(L_1) &\ge -6,\\
\mathrm{ord}_{0}^{(10)}(L_1) &\ge -27.
\end{align}  We need to compute a function, preferably a Hauptmodul at $[0]_{10}$, which has positive order at $[1/2]_{10}$.  Only one eta quotient exists which has this property, which we denote
\begin{align}
z = z(\tau) :=& \frac{(q^2;q^2)_{\infty}^{5}(q^5;q^5)_{\infty}}{(q;q)_{\infty}^5 (q^{10};q^{10})_{\infty}}\label{zdef}.
\end{align}  The orders of $z$ at the cusps of $\mathrm{X}_0(10)$ can be computed by Theorem \ref{Ligozat}:
\begin{align}
\mathrm{ord}_{\infty}^{(10)}(z) &= 0,\\
\mathrm{ord}_{1/5}^{(10)}(z) &= 0,\\
\mathrm{ord}_{1/2}^{(10)}(z) &= 1,\\
\mathrm{ord}_{0}^{(10)}(z) &= -1.
\end{align}  Therefore, we know that
\begin{align*}
z^6 L_1 \in\mathcal{M}^0\left( \Gamma_0(10) \right).
\end{align*}  Because $z$ is a Hauptmodul, we should be able to express $z^6 L_1$ as a polynomial in $z$.  However, not only are the coefficients of such an expression are rational, but they possess denominators divisible by very large powers of 5.  Indeed, the expression begins
\begin{align}
z^6 L_1 = \frac{1}{5^{12}} + \frac{22}{5^{12}}z + \frac{198}{5^{12}}z^2 +...\label{L1inz}
\end{align}  This is clearly not a useful representation.

However, we can repair this problem with a small adjustment.  Notice that, because $(1-q)^5\equiv 1-q^5\pmod{5}$, we have
\begin{align}
z\equiv 1\pmod{5}.
\end{align}  Indeed, if we take (\ref{L1inz}) and express $z=1+5x$, then we recover (\ref{L1defninx}).
\begin{lemma}
If $x$ is defined as in (\ref{xdefn}), then
\begin{align}
z = 1+5x\label{zequals1plus9x}
\end{align}
\end{lemma}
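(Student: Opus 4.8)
The plan is to prove the eta-quotient identity $z = 1 + 5x$ by recognizing that both sides, after clearing a suitable power, live in a space of modular functions on $\Gamma_0(10)$ on which Theorem \ref{riemannsurfacetheorema} (and its corollary) forces equality once finitely many $q$-expansion coefficients agree. First I would verify that $x$, as defined in (\ref{xdefn}), is genuinely a modular function over $\Gamma_0(10)$: apply Newman's Theorem \ref{Newmanntheorem} to the exponent vector of $x = q\,(q^2;q^2)_\infty (q^{10};q^{10})^3_\infty (q;q)^{-3}_\infty (q^5;q^5)^{-1}_\infty$ (that is, $r_1 = -3$, $r_2 = 1$, $r_5 = -1$, $r_{10} = 3$), checking $\sum r_\delta = 0$, $\sum \delta r_\delta \equiv 0 \pmod{24}$, $\sum (10/\delta) r_\delta \equiv 0 \pmod{24}$, and that $\prod \delta^{|r_\delta|} = 2\cdot 5 \cdot 10^3 = 10^4$ is a perfect square. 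Then by Theorem \ref{Ligozat} I would compute $\mathrm{ord}_{a/c}^{(10)}(x)$ at the four cusps $\infty, 1/5, 1/2, 0$; the key facts I expect are $\mathrm{ord}_0^{(10)}(x) = -1$ and $\mathrm{ord}^{(10)}_{1/2}(x) \ge 0$, so that $x \in \mathcal{M}^0(\Gamma_0(10))$ and in fact $x$ is also a Hauptmodul for $[0]_{10}$ — just like $z$ via (\ref{zdef}).

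Next I would observe that $1 + 5x$ and $z$ are both elements of $\mathcal{M}^0(\Gamma_0(10))$ — that is, modular functions holomorphic away from the single cusp $[0]_{10}$, each with a pole of order exactly $1$ there (since $\mathrm{ord}_0^{(10)}(z) = -1$ from the orders already listed, and $\mathrm{ord}_0^{(10)}(1+5x) = \mathrm{ord}_0^{(10)}(x) = -1$, the constant $1$ contributing order $0$). Hence $z - (1+5x) \in \mathcal{M}^0(\Gamma_0(10))$ as well, and a priori it has a pole of order at most $1$ at $[0]_{10}$. The strategy is then to show that the principal parts of $z$ and $1+5x$ at $[0]_{10}$ coincide, which kills the pole, and then to match the constant term of the $q$-expansion at $[\infty]_{10}$, which kills the remaining constant: by the Corollary to Theorem \ref{riemannsurfacetheorema}, a modular function on $\Gamma_0(10)$ with nonnegative order at every cusp and vanishing constant term must be identically $0$.

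Concretely, I would carry this out at the cusp $[\infty]_{10}$ rather than wrestling with local coordinates at $0$: since both $z$ and $1+5x$ lie in $\mathcal{M}^0(\Gamma_0(10))$, their difference lies there too, so it suffices to check that $z - 1 - 5x$ has a $q$-expansion at infinity beginning with a positive power of $q$ — equivalently, that $z$ and $1 + 5x$ have the same constant term in $q$ (which is clear, both being $1 + O(q)$ since $z \equiv 1 \pmod 5$ and $x = q + O(q^2)$), AND that the difference, being a holomorphic modular function except possibly at $[0]_{10}$, actually cannot have a pole at $[0]_{10}$. For the latter I would use a dimension/valence count: the space of $f \in \mathcal{M}^0(\Gamma_0(10))$ with $\mathrm{ord}_0^{(10)}(f) \ge -1$ is, since $\mathfrak{g}(\mathrm{X}_0(10)) = 0$, exactly the two-dimensional space $\mathbb{C} \oplus \mathbb{C}\cdot x$, so $z = a + bx$ for constants $a, b$; comparing the first two $q$-expansion coefficients ($z = 1 + 5q + \cdots$, $x = q + \cdots$) forces $a = 1$, $b = 5$. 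The main obstacle — really the only nontrivial computational point — is the careful verification that $x$ satisfies all of Newman's conditions and has the claimed cusp orders, together with expanding $z$ and $x$ far enough in $q$ (a few terms) to pin down $a$ and $b$; everything after that is the standard "genus-$0$, two cusps, match principal parts" argument that the paper's Theorem \ref{riemannsurfacetheorema} and its corollary are designed to support. I would relegate the explicit $q$-expansion check to the Mathematica supplement referenced in the outline.
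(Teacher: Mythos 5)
Your proposal is correct, but it is not the route the paper takes: the authors explicitly remark that the lemma ``may be proved using the modular cusp analysis'' and then deliberately give an elementary proof instead, rewriting $z=\frac{f(-q^2)f(-q^5)}{f(-q)^3f(-q^{10})}\psi^2(q)$, applying the $2$-dissection of $\psi^2(q)$ from Lemma \ref{lemma0} (Eq. (\ref{lemma0eqn2})) to split off one copy of $x$, and then using (\ref{lemma0eqn3}) with $q\mapsto -q$ to identify the remaining piece as $1+4x$, giving $1+5x$ with no modularity machinery at all. Your argument is precisely the cusp-analysis proof the paper alludes to, and it goes through: the Newman conditions for $x$ check out ($\sum r_\delta=0$, $\sum\delta r_\delta=24$, $\sum(10/\delta)r_\delta=-24$, $\prod\delta^{|r_\delta|}=10^4$), Ligozat gives $\mathrm{ord}^{(10)}_\infty(x)=1$, $\mathrm{ord}^{(10)}_{1/2}(x)=\mathrm{ord}^{(10)}_{1/5}(x)=0$, $\mathrm{ord}^{(10)}_0(x)=-1$, so both $z$ and $x$ lie in $\mathcal{M}^0(\Gamma_0(10))$ with simple poles at $[0]_{10}$, and on the genus-$0$ curve $\mathrm{X}_0(10)$ the space of such functions with pole order at most $1$ is two-dimensional, forcing $z=a+bx$ and then $a=1$, $b=5$ from $z=1+5q+\cdots$, $x=q+\cdots$. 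The one point to present carefully is the dimension count: the paper only supplies Theorem \ref{riemannsurfacetheorema} and its corollary, not Riemann--Roch, so it is cleaner to phrase the step as you also suggest — choose $b$ so that $b\,x$ cancels the principal part of $z$ at $[0]_{10}$, conclude $z-bx$ is constant by the corollary, and read off $b=5$ and the constant $1$ from the expansion at $[\infty]_{10}$. The trade-off is clear: your proof is essentially automatic (finite eta-quotient and $q$-expansion checks) but leans on the Riemann-surface framework, while the paper's proof is longer to discover but entirely elementary and self-contained within the theta-function toolkit it has already set up for Theorems \ref{2A}--\ref{3}.
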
  It is the function $x$ which will be our most useful reference function.  This lemma may be proved using the modular cusp analysis.  However, we give an elementary proof here.
\begin{proof}
Notice that
\begin{equation*}
z=\dfrac{\xi(-q^2)^5\xi(-q^5)}{\xi(-q)^5\xi(-q^{10})}\ \ \ \and \ \ \ 
x=q\dfrac{\xi(-q^2)\xi(-q^{10})^3}{\xi(-q)^3\xi(-q^{5})}.
\end{equation*}  From here we have
\begin{eqnarray}
z&=& \dfrac{\xi(-q^2)\xi(-q^5)}{\xi(-q)^3\xi(-q^{10})} \psi^2(q)\nonumber\\
&=& \dfrac{\xi(-q^2)\xi(-q^5)}{\xi(-q)^3\xi(-q^{10})}\left(\dfrac{\xi(-q^2)\xi(-q^5)^3}{\xi(-q)\xi(-q^{10})}+q\dfrac{\xi(-q^{10})^4}{\xi(-q^5)}\right)\ \ (\text{by}\ \eqref{lemma0eqn2})\nonumber\\
&=& \dfrac{\xi(-q^2)^2}{\xi(-q)^4}\phi^2(-q^5)+x\nonumber\\
&=& \dfrac{\xi(-q^2)^2}{\xi(-q)^4} \left(\phi^2(-q)+4qf_R(-q,-q^9)f_R(-q^3,-q^7)\right)+x\ \ (\text{from}\ \eqref{lemma0eqn3}\ \text{with}\ q\mapsto-q)\nonumber\\
&=&1+4q\dfrac{\xi(-q^2)^2}{\xi(-q)^4}\dfrac{(q;q^2)_\infty \xi(-q^{10})^2}{(q^5;q^{10})_\infty}+x
=1+4x+x = 1+5x.\nonumber
\end{eqnarray}
\end{proof}
Both $z$ and $x$ will prove useful to us.  As such, we will define two modular equations in each function which will become useful later.
\begin{theorem}
Define
\begin{align*}
a_0(\tau) &= -(625x^5+500x^4+150x^3+20x^2+x)\\
a_1(\tau) &=-(15x+305x^2+2325x^3+7875x^4+10000x^5)\\
a_2(\tau) &= -(85x+1750x^2+13125x^3+46500x^4+60000x^5)\\
a_3(\tau) &=-(215x+4475x^2+35000x^3+122000x^4+160000x^5)\\
a_4(\tau) &= -(205x+4300x^2+34000x^3+120000x^4+160000x^5).
\end{align*}  We then have
\begin{align}
x^5+\sum_{j=0}^4 a_j(5\tau) x^j = 0.\label{modX}
\end{align}
\end{theorem}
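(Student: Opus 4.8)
The plan is to recognize (\ref{modX}) as a \emph{modular equation of level 5} relating $x(\tau)$ to $x(5\tau)$, and to prove it by the standard ``compare-poles-on-a-compact-Riemann-surface'' argument. First I would note that $x = (z-1)/5$ with $z$ the eta-quotient in (\ref{zdef}), and that by Theorem \ref{Ligozat} the function $z(\tau)$ is a Hauptmodul for $\Gamma_0(10)$ with its only pole (a simple one) at the cusp $[0]_{10}$; consequently $x\in\mathcal{M}^0(\Gamma_0(10))$ as well. The function $x(5\tau)$ is then a modular function on $\Gamma_0(50)$: one checks via Theorem \ref{Newmanntheorem} that replacing $q$ by $q^5$ in the eta-quotient for $x$ produces a valid element of $\mathcal{M}(\Gamma_0(50))$, and via Theorem \ref{Ligozat} one computes the orders of $x(5\tau)$ at the cusps of $\Gamma_0(50)$. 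The key structural fact is that $\mathcal{M}^0(\Gamma_0(10))$ is generated as a $\mathbb{C}$-algebra by the Hauptmodul $x$ (genus $0$, single cusp with a pole), so \emph{any} element of $\mathcal{M}^0(\Gamma_0(10))$ that one can exhibit is automatically a polynomial in $x$.

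Next I would form the polynomial $P(X,\tau) := X^5 + \sum_{j=0}^4 a_j(5\tau)X^j$ and show that $P(x(\tau),\tau)$, a priori a modular function on $\Gamma_0(50)$, in fact lies in $\mathcal{M}^0(\Gamma_0(10))$ and is \emph{holomorphic at the cusp $[0]_{10}$}, hence holomorphic everywhere, hence constant by the Corollary to Theorem \ref{riemannsurfacetheorema}; matching the $q$-expansion at $\infty$ through enough terms (the leading behaviours of $x$ and $x(5\tau)$ are $q + O(q^2)$ and $q^5 + O(q^{10})$ respectively) then forces that constant to be $0$. The delicate bookkeeping is the pole/zero accounting at the cusps of $\Gamma_0(50)$ lying over $[0]_{10}$ and $[1/2]_{10}$: since $x$ has a pole of order $-1$ at $[0]_{10}$, the term $x^5$ contributes a pole of order $5$ there, and one must verify that the coefficients $a_j(5\tau)$ — built from $x(5\tau)$, which is regular at the cusps sitting over $[0]_{10}$ but has poles at cusps over $[0]_{50}$-type points — are arranged precisely so that all poles of $P(x(\tau),\tau)$ cancel except possibly at $[0]_{10}$, and there the principal part vanishes as well. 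This is exactly the content of the ``modular cusp analysis'' the paper defers to Section \ref{initialrelationssection}, and it is the main obstacle: one needs the full table of cusps of $\mathrm{X}_0(50)$, the ramification indices of the covering $\mathrm{X}_0(50)\to\mathrm{X}_0(10)$ induced by $\tau\mapsto 5\tau$, and the orders of $x(\tau)$ and $x(5\tau)$ at each, after which the vanishing of $P(x(\tau),\tau)$ is forced.

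An alternative, more computational route — which I would actually carry out in practice and which matches the spirit of the Mathematica supplement — is to use the continued-fraction machinery already set up: writing everything in terms of $T(q^5)$ via Lemmas \ref{lemma1} and \ref{lemma2}, one has explicit $q$-series identities for $f(-q)/f(-q^{25})$ and $1/(q;q)_\infty$, and since $x$ is (up to the power of $q$) a ratio of eta-quotients one can express both $x(\tau)$ and $x(5\tau)$ as rational functions of $T(q^5)$ and $q$; eliminating $T(q^5)$ between the resulting relations yields a polynomial relation between $x(\tau)$ and $x(5\tau)$, and degree considerations (the covering has degree $5$ over the $x(5\tau)$-line, so the minimal polynomial of $x(\tau)$ over $\mathbb{C}(x(5\tau))$ has degree $5$) pin it down to be (\ref{modX}) once the five explicit coefficients $a_j$ are read off and checked against finitely many $q$-coefficients. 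Either way, once $P(x(\tau),\tau)$ is known to be a modular function on a fixed $\Gamma_0(N)$ with a pole at a single cusp and with a $q$-expansion at $\infty$ beginning with sufficiently many zeros, it must vanish identically, which is (\ref{modX}).
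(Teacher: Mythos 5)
Your primary route is essentially the paper's own proof: regard the left-hand side of (\ref{modX}) as a modular function on $\Gamma_0(50)$, control its poles by computing cusp orders via Theorem \ref{Ligozat} (this is exactly the paper's Table \ref{tablew0}), and reduce the identity to a finite $q$-expansion check, justified by the constancy of functions holomorphic on the compact curve. The paper's only substantive difference is a normalization trick: instead of tracking pole cancellation among the cusps lying over $[0]_{10}$, it multiplies the left-hand side by $x(5\tau)^{-5}$, reads off from Table \ref{tablew0} that the result lies in $\mathcal{M}^{\infty}\left(\Gamma_0(50)\right)$, and then verifies (in the Mathematica supplement) that this function has vanishing principal part and constant term $0$ at $[\infty]_{50}$, hence is identically zero. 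One intermediate step of your plan does not work as stated: you propose to \emph{show} that $P(x(\tau),\tau)$ lies in $\mathcal{M}^{0}\left(\Gamma_0(10)\right)$, but the coefficients $a_j(5\tau)$ are only $\Gamma_0(50)$-invariant, so $\Gamma_0(10)$-invariance of the combination cannot be established a priori --- it holds only because the function is ultimately $0$. This is harmless, since your own level-$50$ bookkeeping (or the paper's $x(5\tau)^{-5}$ normalization) renders the descent unnecessary, and the valence-type conclusion at level $50$ --- poles confined to a single cusp of bounded order, vanishing to higher order elsewhere --- is what actually closes the argument, provided ``sufficiently many zeros'' is taken to mean more than the total possible pole order. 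Your alternative elimination route via $T(q^5)$ is plausible but is not what the paper does, and it would still terminate in a finite verification of the same kind.
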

\begin{proof}
See the final section.
\end{proof}
\begin{theorem}
Define
\begin{align*}
b_0(\tau) &=-z^5\\
b_1(\tau) &=1+5z+5z^2+5z^3+5z^4-16z^5\\
b_2(\tau) &=-4-15z+10z^2+35z^3+60z^4-96z^5\\
b_3(\tau) &=6+15z-35z^2+40z^3+240z^4-256z^5\\
b_4(\tau) &=-4-5z+20z^2-80z^3+320z^4-256z^5\\
b_5(\tau) &=1.
\end{align*}  We then have
\begin{align}
z^3+\sum_{k=0}^4 b_k(5\tau) z^k = 0.\label{modZ}
\end{align}
\end{theorem}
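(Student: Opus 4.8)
The plan is to recognize the left-hand side of $(\ref{modZ})$ as a modular function on $\Gamma_0(50)$ and to annihilate it by the valence principle, while noting at the outset that $(\ref{modZ})$ is equivalent to the modular equation $(\ref{modX})$. Write $F$ for the left-hand side of $(\ref{modZ})$, which (using the declared value $b_5=1$) is the monic degree-$5$ polynomial $\sum_{k=0}^5 b_k(5\tau)z^k$ in $z=z(\tau)$. Both $z(\tau)$ and $z(5\tau)$ are eta quotients of level $50$: for $z$ the index vector of $(\ref{zdef})$ is $r_1=-5,\ r_2=5,\ r_5=1,\ r_{10}=-1$, and $\tau\mapsto 5\tau$ multiplies each $\delta$ by $5$, so Newman's four conditions (Theorem \ref{Newmanntheorem}) are readily checked and $z(\tau),z(5\tau)\in\mathcal{M}(\Gamma_0(50))$; hence $F\in\mathcal{M}(\Gamma_0(50))$ and $F$ is holomorphic on $\mathbb{H}$. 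By the corollary to Theorem \ref{riemannsurfacetheorema}, it then suffices to prove $\mathrm{ord}^{(50)}_{a/c}(F)\ge 0$ at every cusp of $\Gamma_0(50)$ together with $\mathrm{ord}^{(50)}_\infty(F)\ge 1$. The order at $\infty$ is immediate: as $q\to 0$ we have $z,z(5\tau)\to 1$, so the value of $F$ at $q=0$ is $\sum_{k=0}^5 b_k(1)=1-1+5-10+10-5=0$, whence $\mathrm{ord}^{(50)}_\infty(F)\ge 1$.

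The substance lies in the orders at the remaining eleven cusps (there are twelve cusps of $\Gamma_0(50)$). Using Ligozat's formula (Theorem \ref{Ligozat}) I would tabulate $\mathrm{ord}^{(50)}_{a/c}(z)$ and $\mathrm{ord}^{(50)}_{a/c}(z(5\tau))$ at every cusp. Wherever both $z$ and $z(5\tau)$ are holomorphic the conclusion is automatic, since each $b_k$ has nonzero constant term and so every monomial $b_k(5\tau)z^k$ has nonnegative order. The difficulty is concentrated at the cusps where $z$ or $z(5\tau)$ has a pole: for instance at $[0]_{50}$, where Ligozat gives $\mathrm{ord}^{(50)}_0(z)=-5$, so that the term $z^5$ alone has order $-25$ there, while $z(5\tau)$ has a simple pole and $b_4(5\tau)z^4$ also has order $-25$. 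At such a cusp the crude estimate $\mathrm{ord}(F)\ge\min_k\mathrm{ord}(b_k(5\tau)z^k)$ is hopelessly negative, and one must instead expand $z$ and $z(5\tau)$ as Laurent series in a local uniformizer at the cusp and verify that the principal part of $F$ cancels term by term. This is exactly the ``modular cusp analysis'' of Section \ref{initialrelationssection}; it is a finite but lengthy computation, best carried out in the Mathematica supplement. Once the principal parts are shown to vanish at each such cusp, $F$ is holomorphic on the compact Riemann surface $\mathrm{X}_0(50)$ and vanishes at $\infty$, so $F$ is constant and equal to $0$, which is $(\ref{modZ})$.

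Finally, I would record the shortcut. By $(\ref{zequals1plus9x})$ we have $z=1+5x$, hence $x(\tau)=(z(\tau)-1)/5$ and $x(5\tau)=(z(5\tau)-1)/5$; substituting these into $(\ref{modX})$ and multiplying through by $5^5$ turns that identity into a monic degree-$5$ relation in $z(\tau)$ whose coefficients are polynomials in $z(5\tau)$, and an explicit expansion (again relegated to the supplement) identifies those coefficients with the $b_k$ above. Thus $(\ref{modZ})$ and $(\ref{modX})$ are two coordinate forms of the same modular equation, and proving either one yields the other. The main obstacle is accordingly the same in both formulations: it is not the algebra of the change of variable but the cusp cancellation. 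A lower bound from Ligozat's formula alone cannot suffice, because the order-$5$ pole of $z$ at $[0]_{50}$ inflates the apparent pole orders of $F$, so one genuinely needs the explicit Laurent expansions at the cusps to witness that the principal parts collapse---exactly the content the authors defer to Section \ref{initialrelationssection}.
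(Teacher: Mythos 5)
Your closing paragraph is, in substance, the paper's entire proof of this theorem: since $z=1+5x$ by (\ref{zequals1plus9x}), one substitutes $x=(z-1)/5$ into (\ref{modX}), clears the powers of $5$, and reads off the monic quintic $z^5+\sum_{k=0}^{4}b_k(5\tau)z^k=0$ (the exponent $3$ printed in (\ref{modZ}) is a typo, as you correctly inferred from $b_5=1$ and from the later use $b_0(5\tau)=-\sum_{k=1}^{5}b_k(5\tau)z^k$). Given (\ref{modX}), which the paper proves separately in Section \ref{initialrelationssection}, nothing else is required here: the passage from (\ref{modX}) to (\ref{modZ}) is pure polynomial algebra, with no cusp analysis of its own. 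So the ``shortcut'' you relegate to the end is the intended argument, and your main route is an unnecessary detour.

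More importantly, your main route has a genuine gap, and your account of what is deferred to Section \ref{initialrelationssection} is not accurate. You reduce (\ref{modZ}) to the cancellation of the principal parts of $F=\sum_{k=0}^{5}b_k(5\tau)z^k$ at the cusps $[0]_{50}$ and $[j/5]_{50}$, and you propose to check this by expanding $z(\tau)$ and $z(5\tau)$ in local uniformizers at those cusps. But the toolkit the paper assembles (and the supplement implements) yields only \emph{orders} at cusps via Theorem \ref{Ligozat}, not Laurent expansions there; producing expansions of eta quotients at cusps other than $\infty$ requires the eta transformation formula with its multiplier system, which you have not set up, so the decisive step of your argument is left as a computation you cannot actually perform with the stated tools. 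Your claim that such expansions are ``genuinely needed'' even for (\ref{modX}) misreads Section \ref{initialrelationssection}: there the authors never expand at any cusp other than $[\infty]_{50}$; instead they multiply the left-hand side by $x(5\tau)^{-5}$ so as to place all poles at $\infty$ (using Table \ref{tablew0} only for order bookkeeping), and then verify that the principal part and constant term of the ordinary $q$-expansion at $\infty$ vanish --- a finite check on $q$-series of eta products. If you insist on working directly in the $z$-coordinate, the viable repair of your argument is the same pole-clearing device (multiply $F$ by a suitable eta quotient, e.g.\ a power of $x(5\tau)$, with zeros at the offending cusps, and check finitely many coefficients at $\infty$), not cusp-by-cusp principal-part cancellation; but the cleanest course is simply the change of variables from (\ref{modX}), which is what the paper does.
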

\begin{proof}
Take (\ref{modX}) and change variables by $z=(x-1)/5$, and simplify.
\end{proof}
\section{Algebra Structure}\label{algebrastructure}
We will now define the spaces of modular functions which our sequence $\mathcal{L}$ will live in, and how the operators $U^{(\alpha)}$ will affect them.
\subsection{Localized Ring}
We will express each member of $\mathcal{L}$ as a rational polynomial in which the denominator consists of powers of $1+5x$.  With this in mind, we define the multiplicatively closed set
\begin{align}
\mathcal{S} := \left\{ (1+5x)^n : n\in\mathbb{Z}_{n\ge 0} \right\}.\label{Sdef}
\end{align}  We will be working with subspaces of the localized ring
\begin{align}
\mathbb{Z}[x]_{\mathcal{S}} := \mathcal{S}^{-1}\mathbb{Z}[x].
\end{align}  To define these subspaces, we begin by defining functions which will serve as a useful lower bound on the 5-adic valuation of the coefficients of powers of $x$ for the associated functions.
\begin{align}
\theta_1(m) &:= \begin{cases} 
   0, &  1\le m\le 7, \\
   \left\lfloor \frac{5m-2}{7} \right\rfloor - 5,       & m\ge 8,
  \end{cases}\label{theta1defn}
\end{align}
\begin{align}
\theta_0(m) &:= \begin{cases} 
   0, &  1\le m\le 4, \\
   \left\lfloor \frac{5m-1}{7} \right\rfloor - 2,       & m\ge 5.
  \end{cases}\label{theta0defn}
\end{align}

We now define the critical mapping $\Omega$ which we discussed in Section \ref{historysectionetc}.  Let
\begin{align}
\Omega&:\bigoplus_{m=2}^{\infty}\mathbb{Z}\rightarrow\mathbb{Z}/5\mathbb{Z}^2,\\
&:\mathbf{s}\mapsto \begin{pmatrix}
  1 & 1 & 2 & 1 & 0 & 0 & 0 & 0 & 0 & 0 ... \\
  0 & 0 & 4 & 0 & 1 & 1 & 1 & 0 & 0 & 0 ...
 \end{pmatrix}\cdot\mathbf{s} = \begin{pmatrix}
  s(2) + s(3) + 2s(4) + s(5) \\
  4s(4) + s(6) + s(7) + s(8)
 \end{pmatrix}\label{omegadefinition}
\end{align} with each component taken modulo 5.  Of course, the index $m$ used in the domain of $\Omega$ can begin with any integer we want.  We begin with $m=2$ because it is convenient when we build the associated function spaces defined below.

Given the form of $L_1$ in (\ref{L1defninx}), we expect to represent each $L_{\alpha}$ in terms of $x^m/(1+5x)^n$ for various nonnegative integers $m,n$.  These are useful reference functions over which we have a great deal of control.

More precisely, if we let $s(m)$ be an arbitrary integer-valued function which is \textit{discrete}, i.e., with finite support, then we define the following spaces:
\begin{align}
\mathcal{V}^{(0)}_n:=& \left\{ \frac{1}{(1+5x)^n}\sum_{m\ge 1} s(m)\cdot 5^{\theta_0(m)}\cdot x^m \right\},\\
\hat{\mathcal{V}}_n:=& \left\{ \frac{1}{(1+5x)^n}\sum_{m\ge 2} s(m)\cdot 5^{\theta_1(m)}\cdot x^m \right\},\\
\mathcal{V}^{(1)}_n:=& \left\{ \frac{1}{(1+5x)^n}\sum_{m\ge 2} s(m)\cdot 5^{\theta_1(m)}\cdot x^m : \left( s(m) \right)_{m\ge 2}\in\ker\left( \Omega \right) \right\}.
\end{align}  

It is very reasonable to ask why we invoke such an arbitrary algebraic mapping as $\Omega$.  The reason is that in our subsequent proofs, especially in lines (\ref{kernelmotivationA})-(\ref{kernelmotivationB}) below, we will require the output sums in (\ref{omegadefinition}) to vanish mod 5.  For $\alpha\ge 1$, we will derive our even-indexed $L_{2\alpha}$ by applying the associated $U^{(1)}$ operator to each of the component functions of $L_{2\alpha-1}$.

Ideally, we would expect that applying $U^{(1)}$ to each component would yield a combination of functions, each of which is divisible by an additional power of 5.  In that case, $L_{2\alpha}$ must clearly be divisible by an extra power of 5 (since each component is so divisible).  This is indeed the case in the proofs of classical congruence families, e.g., those of $p(n)$ modulo powers of 5.

However, this condition is too strong for our problem.  Most components will indeed give us the extra power of 5 that we need; however, some anomalous components \textit{will not} give us this power.  We can compensate for this by showing that these anomalous components essentially \textit{cancel out} when they are combined to form $L_{2\alpha}$.  This \textit{cancellation} property is especially delicate, and we will find that it will only succeed provided that the vector formed by the coefficients $s(m)$ is a member of the kernel of $\Omega$.

As an example of such a function, consider $L_1$, which is given in (\ref{L1defninx}).  If we divide out the common factor of 5, then we have 
\begin{align}
\frac{1}{5}\cdot L_1 =\frac{1}{(1+5x)^6}&\big( 1141x^2 + 1368024x^3 + 406830425x^4 + 56096987730x^5 + 4584273042265x^6\\ &+ 252183481030904x^7 + 10080168638009696x^8 +... \big).\label{L1inducstage1}
\end{align}  We have first to check that $5^{\theta_1(m)}$ divides each coefficient of $x^m$.  This can be seen in our representation of the coefficients of $x^m$ in (\ref{L1defninx}) above.  Thus, 
\begin{align}
\frac{1}{5}\cdot L_1\in\hat{\mathcal{V}}_6.\label{L1inducstage1a}
\end{align}  To confirm membership in $\mathcal{V}_6^{(1)}$, we need to check that $(1141,1368024,406830425,56096987730,...)\in\mathrm{ker}\left( \Omega \right)$.

The critical coefficients that we need to examine are those of $x^m$ for $2\le m\le 8$.  But by (\ref{theta1defn}), $\theta_1(m)=0$ for these $m$.  Therefore, we can simply take the coefficients of $x^m$ in (\ref{L1inducstage1}) and confirm that they abide the conditions of $\Omega$:
\begin{align*}
1141+1368024+(2)406830425+56096987730\ &\equiv 0\pmod{5},\\
(4)406830425+4584273042265+252183481030904+10080168638009696\ &\equiv 0\pmod{5}.
\end{align*}  We therefore have kernel membership, and 
\begin{align}
\frac{1}{5}\cdot L_1\in\mathcal{V}^{(1)}_6.\label{L1inducstage1b}
\end{align}
Indeed, we will later prove that every odd-indexed member of $\mathcal{L}$ will live in $\mathcal{V}^{(1)}_n$ for some associated $n$, and similarly for the even-indexed members of $\mathcal{L}$ with respect to $\mathcal{V}^{(0)}_n$.
\subsection{Recurrence Relation}
We know from Lemma \ref{Lete} how $U^{(\alpha)}$ affects members of $\mathcal{L}$.  We now need to understand how $U^{(\alpha)}$ affects members of $\mathbb{Z}[x]_{\mathcal{S}}$.
\begin{lemma}
For all $m,n\in\mathbb{Z}$, and $i\in\{0,1\}$, we have
\begin{align}
U^{(i)}\left( \frac{x^m}{(1+5x)^n} \right) = -\frac{1}{(1+5x)^5}\sum_{j=0}^{4}\sum_{k=1}^{5} a_j(\tau)b_k(\tau)\cdot U^{(i)}\left( \frac{x^{m+j-5}}{(1+5x)^{n-k}} \right).\label{UmodX}
\end{align}
\end{lemma}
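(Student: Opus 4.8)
The plan is to derive the recurrence purely formally from the two modular equations (\ref{modX}) and (\ref{modZ}), using only the linearity and the multiplicativity properties of the operators $U^{(i)}$ collected in Lemma \ref{impUprop}. First I would observe that (\ref{modX}) and (\ref{modZ}), evaluated at $5\tau$, express $x(5\tau)^5$ and $z(5\tau)^3$ as polynomials in $x(5\tau)$ and $z(5\tau)$ respectively, with coefficients that are themselves polynomials in the functions $x=x(\tau)$ and $z=z(\tau)$. The key point is that the $U_5$ operator, and hence each $U^{(i)}$ (which is $U_5$ possibly precomposed with multiplication by the fixed $q$-series $\mathcal{A}$), satisfies $U^{(i)}\!\left(g(5\tau)\cdot h(\tau)\right) = g(\tau)\cdot U^{(i)}\!\left(h(\tau)\right)$ by part (2) of Lemma \ref{impUprop}, since $g(5\tau)$ is a power series in $q^5$. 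This is the mechanism by which the ``inner'' copies of $x$ (i.e. $x(5\tau)$) get pulled out of the operator and become the literal variable $x$ appearing in $a_j(\tau)$, $b_k(\tau)$ in the statement.

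Concretely, I would proceed as follows. Recall $z=1+5x$ (Lemma, eqn.\ (\ref{zequals1plus9x})), so that $\dfrac{x^m}{(1+5x)^n}$ can equally be viewed as a rational monomial in $x$ and $z$. Now take the target argument $\dfrac{x^m}{(1+5x)^n}$ \emph{at} $5\tau$: it equals $\dfrac{x(5\tau)^m}{z(5\tau)^{n}}$. Multiply numerator and denominator so as to isolate one factor $x(5\tau)^5$ and one factor $z(5\tau)^{3}$; that is, write
\begin{align*}
\frac{x(5\tau)^m}{z(5\tau)^n} = x(5\tau)^{m-5}\cdot z(5\tau)^{-n}\cdot x(5\tau)^{5},
\end{align*}
substitute $x(5\tau)^5 = -\sum_{j=0}^4 a_j(\tau)\, x(5\tau)^j$ from (\ref{modX}), and then in the resulting terms with a leftover $z(5\tau)^{-n}$ substitute $z(5\tau)^{3}=-\sum_{k=0}^4 b_k(\tau)\, z(5\tau)^k$ from (\ref{modZ}) (rearranged to lower the power of $z(5\tau)$ in the denominator by one unit). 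This produces an expression for $\dfrac{x(5\tau)^m}{z(5\tau)^n}$ as a finite sum $\sum_{j,k} a_j(\tau) b_k(\tau)\cdot \dfrac{x(5\tau)^{m+j-5}}{z(5\tau)^{n-k}}$ times an overall factor coming from clearing the powers of $z(5\tau)$, which is exactly $-z(5\tau)^{-5}$ — and since $z(5\tau)=1+5x(5\tau)$ is a power series in $q^5$, once we apply $U^{(i)}$ the factor $-\left(1+5x(5\tau)\right)^{-5}$ migrates out to become the stated $-\dfrac{1}{(1+5x)^5}$. Applying $U^{(i)}$ to both sides, using linearity (part (1)) and pulling out every factor that is a series in $q^5$ — namely $a_j(\tau)$, $b_k(\tau)$, and $(1+5x(5\tau))^{-5}$ become $a_j$, $b_k$, $(1+5x)^{-5}$ outside the operator — yields precisely (\ref{UmodX}).

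There are two bookkeeping points to handle carefully, and the second is where I expect the main friction. First, one must check that the algebraic manipulation combining (\ref{modX}) and (\ref{modZ}) genuinely produces the clean shifted arguments $\dfrac{x^{m+j-5}}{(1+5x)^{n-k}}$ with $0\le j\le 4$, $1\le k\le 5$ and the single overall factor $-(1+5x)^{-5}$; this is a direct but slightly intricate polynomial computation in $x(5\tau)$ and $z(5\tau)$, best organized by first clearing to a common denominator $z(5\tau)^{n}\cdot$(something) and matching coefficients — routine, and exactly the kind of step one defers to the Mathematica supplement. Second, and more delicate, is the issue of \emph{negative or small} exponents: the identity is asserted for all $m,n\in\mathbb{Z}$, so one must make sense of $U^{(i)}$ applied to rational functions of $x$ that may have a pole at $q=0$ or negative powers of $x$. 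The correct framework is to work in the field of formal Laurent series in $q$ (the functions $L_\alpha$, $x$, $z$ all lie there), on which $U_5$ and multiplication by $\mathcal{A}$ are well-defined linear maps, and where the two modular equations hold as genuine identities of Laurent series; then the derivation above is valid verbatim with no convergence or domain subtleties, and the displayed recurrence is an identity in that field. I would state this ambient setting explicitly at the start of the proof so that the formal substitution of (\ref{modX}), (\ref{modZ}) is unambiguous, and then the remaining content is the finite polynomial rearrangement, which I would carry out (or cite from the supplement) to confirm the precise coefficients and ranges of $j,k$.
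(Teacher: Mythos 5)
Your overall strategy---combine the two modular equations to shift the exponents, then use Lemma \ref{impUprop}(2) to pull the coefficient functions through $U_5$---is indeed the mechanism behind the paper's proof, and your remark that everything should be read as an identity of formal Laurent series (so that arbitrary $m,n\in\mathbb{Z}$ cause no difficulty) is fine. But the execution as written has the roles of $\tau$ and $5\tau$ reversed, and this is a genuine error, not a notational one. The equations \eqref{modX} and \eqref{modZ} express the top power of $x(\tau)$ (resp.\ of $z(\tau)$) in terms of lower powers of $x(\tau)$ (resp.\ $z(\tau)$) with coefficients $a_j(5\tau)$, $b_k(5\tau)$ evaluated at $5\tau$; they do not assert $x(5\tau)^5=-\sum_{j}a_j(\tau)\,x(5\tau)^j$ or an analogous relation for $z(5\tau)$ with coefficients $b_k(\tau)$, and no such ``flipped'' identities are proved in the paper (the two-variable polynomial relating $x(\tau)$ and $x(5\tau)$ is not symmetric under interchanging the arguments, so these are genuinely different statements that would need their own cusp analysis). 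Moreover the object you decompose, $x(5\tau)^m/z(5\tau)^n$, is not the argument of the lemma: the left-hand side of \eqref{UmodX} is $U^{(i)}$ applied to $x(\tau)^m/(1+5x(\tau))^n$, a function of $\tau$.

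The second error compounds the first: you extract $a_j(\tau)$, $b_k(\tau)$ from inside $U_5$ on the grounds that they are series in $q^5$. They are not---they are polynomials in $x(\tau)$, hence series in $q$---and Lemma \ref{impUprop}(2) only permits extraction of factors of the form $f(5\tau)=f(q^5)$. The correct bookkeeping runs in the opposite direction: starting from $x(\tau)^m/(1+5x(\tau))^n$, one uses \eqref{modZ} in the form $b_0(5\tau)=-\sum_{k=1}^{5}b_k(5\tau)\,z(\tau)^k$, with $b_0(5\tau)=-(1+5x(5\tau))^5$, to raise the index of the denominator by $k$, then \eqref{modX} to lower the numerator exponent by $5-j$; the factors $a_j(5\tau)\,b_k(5\tau)\,(1+5x(5\tau))^{-5}$, being series in $q^5$, are the ones that pass through $U_5$ (after multiplying by $\mathcal{A}^{1-i}$) and become $a_j(\tau)\,b_k(\tau)\,(1+5x)^{-5}$ outside, while the monomials $x(\tau)^{m+j-5}/(1+5x(\tau))^{n-k}$ remain inside the operator. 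If you swap the roles of $\tau$ and $5\tau$ back in this way, your argument becomes precisely the paper's proof; as written, the substitution step invokes identities that are not \eqref{modX}/\eqref{modZ}, and the extraction step is not licensed by Lemma \ref{impUprop}.
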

\begin{proof}
We take advantage of the modular equation (\ref{modZ}).  We isolate and divide by $b_0(5\tau)$, then multiply by $z^{-n}$ for an arbitrary $n$:
\begin{align}
b_0(5\tau) &= -\sum_{k=1}^5 b_k(5\tau) z^k,\nonumber \\
1 &=-\frac{1}{b_0(5\tau)} \sum_{k=1}^5 b_k(5\tau) z^k,\nonumber\\
z^{-n} &=-\frac{1}{b_0(5\tau)} \sum_{k=1}^5 b_k(5\tau) z^{-(n-k)},\label{xnegn1}
\end{align}  Now we change variables to express $z$ in terms of $x$:
\begin{align}
(1+5x)^{-n} &=-\frac{1}{b_0(5\tau)} \sum_{k=1}^5 b_k(5\tau) (1+5x)^{-(n-k)}.\label{xnegn2}
\end{align}  We multiply both sides by $x^m$ for arbitrary integer $m$:
\begin{align}
\frac{x^m}{(1+5x)^n} &= -\frac{1}{b_0(5\tau)}\sum_{k=1}^{5} b_k(5\tau)\cdot \frac{x^{m}}{(1+5x)^{n-k}}\nonumber\\
&= \frac{1}{(1+5x(5\tau))^5}\sum_{k=1}^{5} b_k(5\tau)\cdot \frac{x^{m}}{(1+5x)^{n-k}}.\label{xnegn3}
\end{align}  We can now take advantage of the modular equation (\ref{modX}):
\begin{align}
\frac{x^m}{(1+5x)^n} &= \frac{1}{b_0(5\tau)}\sum_{k=1}^{5} b_k(5\tau)\cdot\sum_{j=0}^4 a_j(5\tau) \frac{x^{m+j-5}}{(1+5x)^{n-k}}\nonumber\\
&= -\frac{1}{(1+5x(5\tau))^5}\sum_{j=0}^4 \sum_{k=1}^{5}a_j(5\tau) b_k(5\tau)\cdot \frac{x^{m+j-5}}{(1+5x)^{n-k}}.\label{xnegn4}
\end{align}  We multiply by $\mathcal{A}^{1-i}\cdot$, with $i=0,1$:
\begin{align}
\mathcal{A}^{1-i}\cdot\frac{x^m}{(1+5x)^n} &= \frac{1}{b_0(5\tau)}\sum_{k=1}^{5} b_k(5\tau)\cdot\sum_{j=0}^4 a_j(5\tau) \cdot\mathcal{A}^{1-i}\cdot\frac{x^{m+j-5}}{(1+5x)^{n-k}}\nonumber\\
= -\frac{1}{(1+5x(5\tau))^5}&\sum_{j=0}^4\sum_{k=1}^{5}a_j(5\tau) b_k(5\tau)\cdot \mathcal{A}^{1-i}\cdot \frac{x^{m+j-5}}{(1+5x)^{n-k}}.\label{xnegn5}
\end{align}  Remembering that
\begin{align*}
U_5(f(5\tau)\cdot g(\tau)) = f(\tau)\cdot U_5(g(\tau)),
\end{align*} we can take the $U_5$ operator of both sides of (\ref{xnegn5}) and simplify:
\begin{align}
U_5&\left(\mathcal{A}^{1-i}\cdot \frac{x^m}{(1+5x)^n} \right)\nonumber\\ =& -\frac{1}{(1+5x)^5}\sum_{j=0}^{4}\sum_{k=1}^{5} a_j(\tau)b_k(\tau)\cdot U_5\left(\mathcal{A}^{1-i}\cdot \frac{x^{m+j-5}}{(1+5x)^{n-k}} \right).\label{xnegn6}
\end{align}
\end{proof}
\subsection{The Action of $U^{(i)}$ on $\mathbb{Z}[x]_{\mathcal{S}}$}
The previous lemma will be useful in proving a more direct representation of $U^{(i)}\left( x^m/(1+5x)^n \right)$.  As in the cases of $\mathcal{V}^{(i)}_n$, we will define lower bounds on the 5-adic valuations of the numerator coefficients of $x^r$:
\begin{align*}
\pi_0(m,r) &:= \mathrm{max}\left( 0,\left\lfloor \frac{5r-m+2}{7} \right\rfloor - 5 \right),\\
\pi_1(m,r) &:= \left\lfloor \frac{5r-m}{7} \right\rfloor.
\end{align*}  We also define a useful auxiliary function:
\begin{align}
\phi(l) :&=\left\lfloor \frac{5l+13}{7} \right\rfloor.
\end{align}
\begin{definition}
Let $n\ge 1$.  A function $h:\mathbb{Z}^{n}\rightarrow\mathbb{Z}$ is a \textit{discrete array} if for any fixed $(m_1,m_2,...,m_{n-1})\in\mathbb{Z}^{n-1}$, $h(m_1,m_2,...,m_{n-1},m)$ has finite support with respect to $m$.
\end{definition}
\begin{theorem}\label{thmuyox}
There exist discrete arrays $h_1, h_0 : \mathbb{Z}^{3}\rightarrow\mathbb{Z}$ such that\\
\begin{align*}
U^{(1)}&\left( \frac{x^m}{(1+5x)^n} \right) = \frac{1}{(1+5x)^{5n}}\sum_{r\ge \left\lceil m/5\right\rceil} h_1(m,n,r)\cdot 5^{\pi_1(m,r)}\cdot x^r,\\
U^{(0)}&\left( \frac{x^m}{(1+5x)^n} \right) = \frac{1}{(1+5x)^{5n+6}}\sum_{r\ge \left\lceil (m+1)/5\right\rceil} h_0(m,n,r)\cdot 5^{\pi_0(m,r)}\cdot x^r.
\end{align*}
\end{theorem}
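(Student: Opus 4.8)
The plan is to prove both identities simultaneously by a double induction: an outer induction on $n$ (the power of $1+5x$ in the denominator) and, for the base cases $n = 0, 1, \dots$ up to a small threshold, a further reduction that expresses $U^{(i)}(x^m)$ for arbitrary $m$ in terms of a finite list of ``initial relations.'' The key structural input is the recurrence \eqref{UmodX}, which relates $U^{(i)}\bigl(x^m/(1+5x)^n\bigr)$ to a finite linear combination of terms $U^{(i)}\bigl(x^{m+j-5}/(1+5x)^{n-k}\bigr)$ with $0 \le j \le 4$ and $1 \le k \le 5$. Since each such term has strictly smaller denominator exponent $n-k < n$, the recurrence reduces the general claim to the case of small $n$; after finitely many applications one lands on $U^{(i)}(x^{m'})$ for various shifted exponents $m'$, possibly negative. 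The ten initial relations catalogued in the Appendix (referenced in the Outline) give explicit formulas for $U^{(i)}(x^{m})$ for $m$ in a complete residue system modulo $5$ together with a recurrence in $m$; from these one reads off the leading exponent $\lceil m/5 \rceil$ (resp.\ $\lceil (m+1)/5 \rceil$), the denominator exponent $5n$ (resp.\ $5n+6$ — here coming from the extra factor $\mathcal{A}$ in $U^{(0)}$), and a first, crude $5$-adic lower bound on the coefficients.

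First I would set up the bookkeeping: verify that the right-hand sides of \eqref{UmodX}, when expanded using the claimed forms for the lower-$n$ terms, produce a sum of the claimed shape, i.e.\ a single power $(1+5x)^{5n}$ (resp.\ $(1+5x)^{5n+6}$) in the denominator and a numerator whose $x^r$-coefficient is $5^{\pi_i(m,r)}$ times an integer. The denominator exponent matches because $a_j(\tau), b_k(\tau)$ are polynomials in $x$ (via $z = 1+5x$), the prefactor contributes $(1+5x)^{-5}$, and each inner term contributes $(1+5x)^{-5(n-k)}$, so the worst case $k=1$ gives $(1+5x)^{-5-5(n-1)} = (1+5x)^{-5n}$, exactly as claimed — one clears the lower powers by multiplying through. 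Next I would establish the \emph{support} claim (the lower bound $r \ge \lceil m/5 \rceil$, resp.\ $\lceil (m+1)/5\rceil$): this follows by tracking the minimal power of $q$ through the $U_5$ operator, or combinatorially by induction through \eqref{UmodX}, noting that $U_5$ divides the exponent by $5$ and the shift $j - 5 \le -1$ is compensated by $1 \le k$. The \emph{discreteness} of $h_1, h_0$ — finite support in $r$ for each fixed $(m,n)$ — follows from the corresponding finiteness of the initial relations together with the fact that \eqref{UmodX} is a finite sum of finitely-supported contributions.

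The genuinely hard part — and the reason the full proof is deferred to Section \ref{initialrelationssection} — is the precise $5$-adic valuation bound encoded by $\pi_i(m,r)$, in particular the floor-function $\bigl\lfloor (5r-m)/7 \bigr\rfloor$ (resp.\ $\max(0, \lfloor (5r-m+2)/7\rfloor - 5)$). A naive induction through \eqref{UmodX} loses too much: the coefficients $a_j, b_k$ are themselves divisible by various powers of $5$ (inspect the explicit formulas — every $a_j$ is divisible by $5$, and the $b_k$ have mixed divisibility), and one must check that these gains, combined across the double sum, at least match the required $\pi_i$ after dividing by $(1+5x)^{5n}$ and re-expanding. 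This is a delicate amortized estimate: the auxiliary function $\phi(l) = \lfloor (5l+13)/7 \rfloor$ is presumably introduced precisely to package the arithmetic of how the floor-bound propagates under the shift $r \mapsto$ (new exponent) and $m \mapsto m + j - 5$. I expect the cleanest route is to prove the bound first for the initial relations $U^{(i)}(x^m)$ (this is the computational heart, done with the Mathematica supplement), then show by induction on $n$ that the recurrence \eqref{UmodX} \emph{preserves} membership in the relevant $5$-adically-weighted module — i.e.\ that if each lower-$n$ term satisfies its $\pi_i$-bound then so does the combination — using the explicit $5$-divisibility of the $a_j b_k$ to absorb the denominator clearing. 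The main obstacle is therefore not the existence of the $h_i$ or the shape of the formula, but verifying that the seven-periodic floor bound is tight enough to survive the recursion; this is exactly where the paper invokes the modular cusp analysis and the deferred arguments of Section \ref{initialrelationssection}.
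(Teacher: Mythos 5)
Your proposal follows essentially the same route as the paper: a double induction on $m$ and $n$ driven by the recurrence \eqref{UmodX}, with the ten Appendix relations (verified by the modular cusp analysis of Section \ref{initialrelationssection}) as base cases, and the $5$-adic bookkeeping carried by the divisibility of the coefficients of $-a_j(\tau)b_k(\tau)(1+5x)^{5(k-1)}$, exactly as in the paper's verification that $\pi_i(m_0+j-5,r)+\left\lfloor \frac{5l+j+1}{7} \right\rfloor\ge\pi_i(m_0,r+l)$. Two small slips worth correcting, though they do not affect the plan: $a_0(\tau)$ is \emph{not} divisible by $5$ (its $x$-coefficient is $-1$), and the support bound survives the shift $j-5$ because each product $a_j b_k(1+5x)^{5(k-1)}$ has no constant term in $x$ (so the shift $l\ge 1$ compensates), not because $k\ge 1$ --- although your alternative argument of tracking the minimal $q$-power through $U_5$ also works.
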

\begin{proof}
We prove the the theorem by induction over $m$ and $n$.  Let us suppose that Theorem \ref{thmuyox} is true for all $m\le m_0-1$, $n\le n_0-1$ for some fixed $m_0, n_0\ge 5$.  For convenience, as we work with the operator $U^{(i)}$, we will define $\delta := 1-i$ and $\kappa := 6-6i$.  We then write
\begin{align}
&U^{(i)}\left( \frac{x^{m_0}}{(1+5x)^{n_0}} \right)\nonumber\\ = -&\frac{1}{(1+5x)^5}\sum_{j=0}^{4}\sum_{k=1}^{5} a_j(\tau)b_k(\tau)\cdot U^{(i)}\left( \frac{x^{m_0+j-5}}{(1+5x)^{n_0-k}} \right)\\
= -&\frac{1}{(1+5x)^5}\sum_{j=0}^{4}\sum_{k=1}^{5} \frac{a_j(\tau)b_k(\tau)}{(1+5x)^{5(n_0-k)+\kappa}}\nonumber\\ 
&\times\sum_{r\ge \left\lceil (m_0+j-5+\delta)/5 \right\rceil} h_i(m_0+j-5,n_0-k,r)\cdot 5^{\pi_i(m_0+j-5,r)}\cdot x^r\\
=&\frac{1}{(1+5x)^{5n_0+\kappa}}\sum_{j=0}^{4}\sum_{k=1}^{5} w(j,k)(\tau)\nonumber\\ 
&\times\sum_{r\ge \left\lceil (m_0+j-5+\delta)/5 \right\rceil} h_i(m_0+j-5,n_0-k,r)\cdot 5^{\pi_i(m_0+j-5,r)}\cdot x^r,
\end{align} wherein we define
\begin{align}
w(j,k)(\tau) :=& -a_j(\tau)b_k(\tau)(1+5x)^{5(k-1)}\nonumber\\
=& \sum_{l=1}^{30} v(j,k,l)\cdot 5^{\left\lfloor \frac{5l+j+1}{7} \right\rfloor}\cdot x^l.
\end{align}  Expanding and recombining, we have
\begin{align}
&U^{(i)}\left( \frac{x^{m_0}}{(1+5x)^{n_0}} \right)\nonumber\\ =& \frac{1}{(1+5x)^{5n_0+\kappa}}\sum_{j=0}^{4}\sum_{k=1}^{5}\sum_{l=1}^{30}\sum_{r\ge \left\lceil (m_0+j-5+\delta)/5 \right\rceil}\nonumber\\ &v(j,k,l)\cdot h_i(m_0+j-5,n_0-k,r)\cdot 5^{\pi_i(m_0+j-5,r) + \left\lfloor \frac{5l+j+1}{7} \right\rfloor}\cdot x^{r+l}.\label{mnmodeqnrpli}
\end{align}  We need to verify that the coefficients of $x^{r+l}$ are divisible by $5^{\pi_i(m_0,r+l)}$, and that $r+l$ is bounded below by $\left\lceil (m_0+\delta)/5 \right\rceil$.  On the latter matter, we note that
\begin{align}
r+l &\ge\left\lceil \frac{m_0+j-5+\delta}{5} \right\rceil + l\\ &\ge \left\lceil \frac{m_0+\delta}{5}-\frac{5-j}{5} \right\rceil + l\\
&\ge \left\lceil \frac{m_0+\delta}{5} \right\rceil - 1 + l\\ &\ge \left\lceil \frac{m_0+\delta}{5} \right\rceil.
\end{align}  Regarding the former matter, we first take $i=1$.  Notice that
\begin{align*}
\left\lfloor \frac{M}{7} \right\rfloor + \left\lfloor \frac{N}{7} \right\rfloor \ge \left\lfloor \frac{M+N-6}{7} \right\rfloor.
\end{align*}  From this, we have
\begin{align}
\pi_1(m_0+j-5,r)+\left\lfloor \frac{5l+j+1}{7} \right\rfloor &= \left\lfloor \frac{5r-m_0-j+5}{7} \right\rfloor + \left\lfloor \frac{5l+j+1}{7} \right\rfloor\\
&\ge \left\lfloor \frac{5(r+l)-m_0}{7} \right\rfloor\\
&=\pi_1(m_0,r).
\end{align}  For $i=0$, we have to more carefully expand 
\begin{align}
&U^{(0)}\left( \frac{x^{m_0}}{(1+5x)^{n_0}} \right)\nonumber\\
=&\frac{1}{(1+5x)^{5n_0+6}}\sum_{j=0}^{4}\sum_{k=1}^{5} w(j,k)(\tau)\nonumber\\ 
&\times\Big(\sum_{\left\lceil (m_0+j-5+1)/5 \right\rceil \le r\le \left\lceil (m_0+j-5+33)/5 \right\rceil-1} h_i(m_0+j-5,n_0-k,r)\cdot x^r\\ 
&+\sum_{r\ge \left\lceil (m_0+j-5+33)/5 \right\rceil} h_0(m_0+j-5,n_0-k,r)\cdot 5^{\left\lfloor \frac{5r-(m_0+j-5)+2}{7} \right\rfloor - 5}\cdot x^r\Big).
\end{align}   Expanding $w(j,k)(\tau)$ again,
\begin{align}
&U^{(0)}\left( \frac{x^{m_0}}{(1+5x)^{n_0}} \right)\nonumber\\ =& \frac{1}{(1+5x)^{5n_0+6}}\sum_{j=0}^{4}\sum_{k=1}^{5}\sum_{l=1}^{30}\Big( \sum_{\left\lceil (m_0+j-5+1)/5 \right\rceil \le r\le \left\lceil (m_0+j-5+33)/5 \right\rceil-1}\nonumber\\ &v(j,k,l)\cdot h_0(m_0+j-5,n_0-k,r)\cdot 5^{\left\lfloor \frac{5l+j+1}{7} \right\rfloor}\cdot x^{r+l}\\
&+\sum_{r\ge \left\lceil (m_0+j-5+33)/5 \right\rceil}\nonumber\\ &v(j,k,l)\cdot h_0(m_0+j-5,n_0-k,r)\cdot 5^{\left\lfloor \frac{5r-(m_0+j-5)+2}{7} \right\rfloor - 5 + \left\lfloor \frac{5l+j+1}{7} \right\rfloor}\cdot x^{r+l}\Big).\label{mnmodeqnrpl}
\end{align}
For $r\ge \left\lceil (m_0+j-5+33)/5 \right\rceil$, we have $\pi_0(m_0+j-5,r) = \left\lfloor \frac{5r-(m_0+j-5)+2}{7} \right\rfloor - 5\ge 0$.  We find that the coefficient of $x^{r+l}$ has a 5-adic valuation of at least
\begin{align}
\left(\left\lfloor \frac{5r-(m_0+j-5)+2}{7} \right\rfloor - 5\right) + \left\lfloor \frac{5l+j+1}{7} \right\rfloor \ge \left\lfloor \frac{5(r+l) - m_0 + 2}{7} \right\rfloor - 5 = \pi_0(m_0,r+l).
\end{align}  For $\left\lceil (m_0+j-5+1)/5 \right\rceil \le r\le \left\lceil (m_0+j-5+33)/5 \right\rceil-1$, we have $\pi_0(m_0+j-5,r)=0$.  Let us suppose that $r+l\le \left\lceil (m_0+j-5+33)/5 \right\rceil-1$.  In this case, we only need the power of 5 to be 0.

On the other hand, if $r+l\ge \left\lceil (m_0+j-5+33)/5 \right\rceil$, then we have
\begin{align}
&\left\lfloor \frac{5l+j+1}{7} \right\rfloor - \left(\left\lfloor \frac{5(r+l)-m_0+2}{7} \right\rfloor - 5\right)\\
&=\left\lfloor \frac{5l+j+36}{7} \right\rfloor -\left\lfloor \frac{5(r+l)-m_0+2}{7} \right\rfloor\\
&\ge \left\lfloor \frac{5(r+l)-m_0+2}{7} \right\rfloor + \left\lfloor \frac{m - 5r +34}{7} \right\rfloor - \left\lfloor \frac{5(r+l)-m_0+2}{7} \right\rfloor\\
&=\left\lfloor \frac{m_0 - 5r +34}{7} \right\rfloor.
\end{align}  We need to show that $m_0 - 5r +34\ge 0$.  But of course this is equivalent to
\begin{align}
r\le \frac{m_0+34}{5},
\end{align} which must be true since
\begin{align}
r\le \left\lceil \frac{m_0+j-5+33}{5} \right\rceil-1\le \left\lceil \frac{m_0+32}{5} \right\rceil.
\end{align}  Therefore in all cases for either value of $i$ we have the coefficient of $x^{r+l}$ divisible by $5^{\pi_i(m_0,r+l)}$.
\begin{align}
&U^{(i)}\left( \frac{x^{m_0}}{(1+5x)^{n_0}} \right)\nonumber\\ =& \frac{1}{(1+5x)^{5n_0+\kappa}}\sum_{\substack{0\le j\le 4,\\ 1\le k\le 5,\\ 1\le l\le 30}} \sum_{r\ge \left\lceil \frac{m_0+\delta}{5} \right\rceil}\nonumber\\ &v(j,k,l)\cdot h_i(m_0+j-5,n_0-k,r-l)\cdot 5^{\pi_i(m_0+j-5,r) + \left\lfloor \frac{5l+j+1}{7} \right\rfloor}\cdot x^{r}.
\end{align}  
\begin{align}
H_i(m,n,r) := \begin{cases}
\sum_{\substack{0\le j\le 4,\\ 1\le k\le 5,\\ 1\le l\le 30}} &\sum_{r\ge \left\lceil \frac{m+\delta}{5} \right\rceil - 1 + l} \hat{H}(i,j,k,l,r),\ r\ge l\\
0,\ r<l
\end{cases}\label{Hi}
\end{align}
\begin{align*}
\hat{H}(i,j,k,l,r) := v(j,k,l)\cdot h_i(m+j-5,n-k,r-l)\cdot 5^{\epsilon(i,j,l,m,r)},
\end{align*}
\begin{align*}
\epsilon(i,j,l,m,r) := \pi_i(m+j-5,r-l) + \left\lfloor \frac{5l+j+1}{7} \right\rfloor - \pi_i(m,r)
\end{align*}
\begin{align}
U^{(i)}\left( \frac{x^{m_0}}{(1+5x)^{n_0}} \right) &= \frac{1}{(1+5x)^{5n_0+\kappa}}\nonumber\\ &\times\sum_{r\ge \left\lceil \frac{m_0+\delta}{5} \right\rceil} H_i(m_0,n_0,r)\cdot 5^{\pi_i(m_0,r)}\cdot x^{r}.
\end{align}
\end{proof}
\subsection{An Important Inequality}
We will need a more detailed understanding of the arithmetic properties of the functions $h_i$.  With this in mind, we give the following useful inequality: 
\begin{lemma}\label{piphippibound}
For $l\ge 0$ and $i\in\{0,1\}$, we have
\begin{align}
\pi_i(m,r-l) + \phi(l) - \pi_i(m,r)\ge 1.\label{inexeq}
\end{align}
\end{lemma}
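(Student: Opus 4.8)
The plan is to reduce the inequality to a purely elementary statement about floor functions and then verify it by bounding each term. Recall that $\pi_1(m,r) = \left\lfloor \frac{5r-m}{7} \right\rfloor$, that $\pi_0(m,r) = \max\!\left(0, \left\lfloor \frac{5r-m+2}{7} \right\rfloor - 5\right)$, and that $\phi(l) = \left\lfloor \frac{5l+13}{7} \right\rfloor$. For the case $i=1$ the left-hand side of \eqref{inexeq} is
\begin{align*}
\left\lfloor \frac{5r-5l-m}{7} \right\rfloor + \left\lfloor \frac{5l+13}{7} \right\rfloor - \left\lfloor \frac{5r-m}{7} \right\rfloor.
\end{align*}
Using the standard superadditivity bound $\left\lfloor \frac{A}{7} \right\rfloor + \left\lfloor \frac{B}{7} \right\rfloor \ge \left\lfloor \frac{A+B-6}{7} \right\rfloor$ with $A = 5r-5l-m$ and $B = 5l+13$, the first two terms are at least $\left\lfloor \frac{5r-m+7}{7} \right\rfloor = \left\lfloor \frac{5r-m}{7} \right\rfloor + 1$, which gives exactly the desired bound $\ge 1$. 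So the $i=1$ case is immediate once the superadditivity lemma is invoked.

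For $i=0$ the argument splits according to which branch of the $\max$ is active in $\pi_0(m,r)$ and in $\pi_0(m,r-l)$. The key observation is that replacing $r$ by $r-l$ with $l \ge 0$ can only decrease the argument $5r-m+2$, so if $\pi_0(m,r) = 0$ (the clamped branch) then the left side is $\pi_0(m,r-l) + \phi(l) \ge 0 + \phi(l)$, and since $\phi(l) = \left\lfloor \frac{5l+13}{7}\right\rfloor \ge \left\lfloor \frac{13}{7} \right\rfloor = 1$ for all $l \ge 0$, we are done. The remaining case is when $\pi_0(m,r) = \left\lfloor \frac{5r-m+2}{7} \right\rfloor - 5 > 0$; here I would further split on whether $\pi_0(m,r-l)$ is clamped to $0$ or not. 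If it is not clamped, then both $\pi_0$-terms carry the $-5$ offset, it cancels, and the computation collapses to exactly the $i=1$ computation (with $m$ shifted by $-2$), so superadditivity again yields $\ge 1$. If $\pi_0(m,r-l) = 0$ while $\pi_0(m,r) > 0$, then the left side is $\phi(l) - \left(\left\lfloor \frac{5r-m+2}{7} \right\rfloor - 5\right)$, and one needs that $\left\lfloor \frac{5r-m+2}{7} \right\rfloor \le \phi(l) + 4 = \left\lfloor \frac{5l+13}{7}\right\rfloor + 4$. This follows because $\pi_0(m,r-l)=0$ forces $\left\lfloor \frac{5(r-l)-m+2}{7}\right\rfloor \le 5$, hence $5(r-l)-m+2 \le 41$, so $5r-m+2 \le 5l+41$, giving $\left\lfloor \frac{5r-m+2}{7}\right\rfloor \le \left\lfloor \frac{5l+41}{7}\right\rfloor = \left\lfloor \frac{5l+13}{7}\right\rfloor + 4$, as needed.

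The main obstacle, such as it is, is purely bookkeeping: one must be careful that the inequality $5(r-l)-m+2 \le 41$ is the sharp consequence of $\pi_0(m,r-l)=0$ and that $41 = 7\cdot 5 + 6$ so that the floor really does land at $5$ and not higher, and one must track the integer offsets ($+2$, $+13$, $-5$, $-6$) correctly through each branch. No deep idea is required beyond the elementary superadditivity of $x \mapsto \lfloor x/7 \rfloor$ and casework on the $\max$; I would present the $i=1$ case in one line, then the three subcases for $i=0$ in sequence, each dispatched by the corresponding elementary estimate above.
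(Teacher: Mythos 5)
Your proposal is correct and takes essentially the same route as the paper's proof: the $i=1$ case via superadditivity of $\lfloor \cdot/7\rfloor$, and the $i=0$ case via casework on which branch of the $\max$ in $\pi_0$ is active, with your bound $5(r-l)-m+2\le 41$ playing the same role as the paper's bound on $l$ in the mixed case. No gaps.
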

\begin{proof}
Notice that for $l=0$, the inequality reduces to 
\begin{align}
\phi(0)\ge 1,
\end{align} which is true since $\phi(0)=1$.  We therefore consider the case in which $l>0$.

\subsubsection{The Case for $i=0$}
\noindent Consider first that $\pi_0(m,r-l), \pi_0(m,r)=0$.  This reduces to 
\begin{align}
\phi(l)\ge 1,
\end{align} which follows from $\phi(l)\ge \phi(0)\ge 1$.  If $\pi_0(m,r-l)=0,$ and $\pi_0(m,r)>0$, then
\begin{align}
r&\ge \left\lceil\frac{m+33}{5}\right\rceil\text{ and }r-l\le \left\lceil\frac{m+33}{5}\right\rceil - 1 = \left\lceil\frac{m+28}{5}\right\rceil,\\
l&\ge r - \left\lceil\frac{m+28}{5}\right\rceil.
\end{align}  Knowing this, we can rewrite
\begin{align}
\phi(l) &= \left\lfloor \frac{5l+13}{7} \right\rfloor \ge \left\lfloor \frac{5r-m-28+13}{7} \right\rfloor\\
&= \left\lfloor \frac{5r-m-15}{7} \right\rfloor\\
&\ge \left(\left\lfloor \frac{5r-m+2}{7} \right\rfloor-5\right) + 1\\
&= \pi_0(m,r)+1.
\end{align}  If $\pi_0(m,r-l)>0,$ and $\pi_0(m,r)>0$, then
\begin{align}
\pi_0(m,r-l) +& \phi(l)\\
&= \left\lfloor \frac{5(r-l)-m+2}{7} \right\rfloor + \left\lfloor \frac{5l+13}{7} \right\rfloor\\
&\ge \left\lfloor \frac{5r-m+9}{7} \right\rfloor\\
&= \left\lfloor \frac{5r-m+2}{7} \right\rfloor + 1\\
&= \pi_0(m,r) + 1.
\end{align}
\subsubsection{The Case for $i=1$}
\noindent We simply have
\begin{align}
\pi_1(m,r-l) +& \phi(l)\\
&= \left\lfloor \frac{5(r+l)-m}{7} \right\rfloor + \left\lfloor \frac{5l+13}{7} \right\rfloor\\
&\ge \left\lfloor \frac{5r-m+7}{7} \right\rfloor\\
&= \left\lfloor \frac{5r-m}{7} \right\rfloor + 1\\
&= \pi_1(m,r) + 1.
\end{align}
\end{proof}
\subsection{Some Congruence Properties For $h_i$}
We now state a result which will become extremely useful in the proof of our main theorem:
\begin{theorem}\label{congcor1a}
For all $m,n,r\in\mathbb{Z}_{\ge 1}$ and $i\in\{0,1\}$, we have
\begin{align}
h_i(m,n,r)&\equiv h_i(m,n-5,r)\pmod{5}.\label{hmod222}
\end{align}
\end{theorem}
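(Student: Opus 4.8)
The plan is to exploit the recursive definition of $h_i$ given in (\ref{Hi}) and run an induction over $m$ and $n$ parallel to the one used to prove Theorem \ref{thmuyox}. The base case is the set of "initial relations" (the ten relations for $U^{(\alpha)}(x^m)$ listed in the Appendix, together with their $1/(1+5x)^n$-scalings for small $n$): there one checks the congruence $h_i(m,n,r)\equiv h_i(m,n-5,r)\pmod 5$ directly from the explicit expansions, or observes that for $n\le 5$ the statement is vacuous/trivial in the relevant range. For the inductive step, suppose the congruence holds for all pairs $(m',n')$ with $m'\le m_0-1$ or $n'\le n_0-1$. Applying the recursion (\ref{Hi})--(\ref{mnmodeqnrpl}) both to $h_i(m_0,n_0,r)$ and to $h_i(m_0,n_0-5,r)$, each is expressed as the same finite $\mathbb{Z}$-linear combination of the $v(j,k)$'s times $h_i(m_0+j-5,\,n_0-k,\,r-l)\cdot 5^{\epsilon(i,j,l,m_0,r)}$ in the first case, versus $h_i(m_0+j-5,\,n_0-5-k,\,r-l)\cdot 5^{\epsilon(i,j,l,m_0,r)}$ in the second. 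Since $n_0-k\le n_0-1$ and $n_0-5-k\le n_0-1$, the induction hypothesis gives $h_i(m_0+j-5,n_0-k,r-l)\equiv h_i(m_0+j-5,n_0-5-k,r-l)\pmod 5$ for every term, and the two sums are therefore congruent mod $5$, as desired.

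There is one subtlety: the summation ranges in (\ref{Hi}) depend on $m$ but not on $n$, so the index set of terms is literally identical in the two expansions — only the middle argument of $h_i$ shifts by $5$. Hence the matching is term-by-term, and no reindexing is needed. I would also remark that the exponents $\epsilon(i,j,l,m_0,r)$ are the \emph{same} in both expansions (they depend only on $i,j,l,m_0,r$, not on $n$), so the powers of $5$ multiplying corresponding $h_i$-values agree; consequently the congruence is preserved even after multiplying through by those powers of $5$ and by the integers $v(j,k)$. Because $v(j,k)\in\mathbb{Z}$, reduction mod $5$ commutes with this linear combination.

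The main obstacle is really the base case rather than the inductive step. One must verify (\ref{hmod222}) by hand against the ten initial relations in the Appendix and their low-$n$ scalings, and this requires that the definition of $h_i$ on the "seed" values of $(m,n)$ — which come from directly expanding $U^{(i)}(x^m/(1+5x)^n)$ for $m,n\le 5$ — already satisfies the periodicity mod $5$ in the $n$-variable. Here the key input is that $(1+5x)^{5}\equiv 1\pmod 5$ as a polynomial in $x$, so that $(1+5x)^{-(n-k)}$ and $(1+5x)^{-(n-5-k)}$ have the same reduction mod $5$; combined with the $\mathbb{Z}[x]$-coefficients appearing in the modular equations (\ref{modX}), (\ref{modZ}) and in $\mathcal{A}$, this forces the seed values of $h_i$ to be $5$-periodic in $n$. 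Once that is pinned down — most efficiently via the Mathematica supplement — the induction closes immediately. I would therefore structure the write-up as: (i) reduce to the seed cases using (\ref{Hi}) and the induction above; (ii) dispatch the seed cases by the $(1+5x)^5\equiv 1\pmod 5$ observation applied to the explicit initial relations.
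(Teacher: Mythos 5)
Your inductive step is sound as far as it goes---the index ranges and the exponents $\epsilon(i,j,l,m,r)$ in (\ref{Hi}) indeed do not depend on $n$, so the term-by-term matching of the two expansions is legitimate---but the proposal has a genuine gap at the base case, and the base case is where the entire content of the theorem lives.  Since the recursion (\ref{UmodX}) always lowers $m$, an induction on $m$ requires as its base the congruence $h_i(m,n,r)\equiv h_i(m,n-5,r)\pmod{5}$ for the seed values of $m$ and \emph{all} $n$ (and all $r$); this is an infinite family of statements, not something dispatched by checking the ten Appendix relations together with ``low-$n$ scalings''.  Your proposed remedy---that $(1+5x)^5\equiv 1\pmod{5}$, so $x^m(1+5x)^{-n}$ and $x^m(1+5x)^{-(n-5)}$ have the same reduction (in fact they agree modulo $25$)---does not suffice: the theorem concerns the numerator coefficients \emph{after} dividing out $5^{\pi_i(m,r)}$, and $\pi_i(m,r)\to\infty$ with $r$.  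A congruence of the two rational functions modulo $25$ only yields $h_i(m,n,r)\equiv h_i(m,n-5,r)$ modulo $5^{2-\pi_i(m,r)}$, which is vacuous as soon as $\pi_i(m,r)\ge 2$.  (There is also a boundary issue: for $6\le n_0\le 10$ your expansion of $h_i(m_0,n_0-5,r)$ invokes values $h_i(m',n',\cdot)$ with $n'\le 0$, outside the range in which the representation of Theorem \ref{thmuyox} has been secured.)

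What is actually needed---and what the paper does---is a single non-inductive step: expand $U^{(i)}\bigl(x^{m_0}/(1+5x)^{n}\bigr)$ using only the modular equation (\ref{modZ}) for $z=1+5x$, i.e.\ recurse in $n$ with $m_0$ held fixed, and then substitute the already-proved representation of Theorem \ref{thmuyox} for each $U^{(i)}\bigl(x^{m_0}/(1+5x)^{n-k}\bigr)$.  Because $b_5(\tau)=1$, the factor $\hat{w}(5)=b_5(\tau)(1+5x)^{20}$ has constant term $1$, so the $(k,l)=(5,0)$ contribution is exactly $h_i(m_0,n-5,r)\cdot 5^{\pi_i(m_0,r)}$, while every other term carries $5^{\pi_i(m_0,r-l)+\phi(l)}$, and Lemma \ref{piphippibound} shows this exceeds $\pi_i(m_0,r)$ by at least $1$.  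Dividing by $5^{\pi_i(m_0,r)}$ gives the congruence at once, for all $m$ and $n$ simultaneously, with no induction on $m$, no appeal to the Appendix data, and no kernel considerations.  If you keep your inductive frame, you must in effect prove this estimate anyway to close the base case---at which point the induction on $m$ becomes superfluous.
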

\begin{proof}
In contrast to the proof of Theorem \ref{thmuyox}, let us examine $U^{(i)}\left( x^{m}/(1+5x)^{n} \right)$ while holding $m$ fixed, and varying $n$ through the modular equation of $1+5x$.  In this case we have
\begin{align}
U^{(i)}&\left( \frac{x^{m_0}}{(1+5x)^{n}} \right)\nonumber\\ =& \frac{1}{(1+5x)^5}\sum_{k=1}^{5}b_k(\tau)\cdot U^{(i)}\left( \frac{x^{m_0}}{(1+5x)^{n-k}} \right)\\
=& \frac{1}{(1+5x)^5}\sum_{k=1}^{5} \frac{b_k(\tau)}{(1+5x)^{5(n-k)-\kappa}} \sum_{r\ge 1} h_i(m_0,n-k,r)\cdot 5^{\pi_i(m_0,r)}\cdot x^r\\
=&\frac{1}{(1+5x)^{5n+\kappa}}\sum_{k=1}^{5} \hat{w}(k)(\tau)\sum_{r\ge 1 } h_i(m_0,n-k,r)\cdot 5^{\pi_i(m_0,r)}\cdot x^r,
\end{align} in which we can expand
\begin{align}
\hat{w}(k)(\tau) :&= b_k(\tau)(1+5x)^{5(k-1)}\nonumber\\
&=\begin{cases}
& \displaystyle{\sum_{l=0}^{20} \hat{v}(k,l)\cdot 5^{\phi(l)}\cdot x^l},\ k<5\\
&\displaystyle{1+\sum_{l=1}^{20} \hat{v}(5,l)\cdot 5^{\phi(l)}\cdot x^l},\ k=5.
\end{cases}
\end{align}  We can now express
\begin{align}
U^{(i)}&\left( \frac{x^{m_0}}{(1+5x)^{n}} \right) = \frac{1}{(1+5x)^{5n+\kappa}}\nonumber\\
\times&\Bigg(\sum_{\substack{1\le k\le 4,\\ 0\le l\le 20,\\ r\ge \left\lceil \frac{m_0+\delta}{5} \right\rceil}} \hat{v}(k,l)\cdot h_i(m_0,n-k,r)\cdot 5^{\pi_i(m_0,r) + \phi(l)}\cdot x^{r+l}\label{reliancenm5a3}\\
&+\sum_{\substack{1\le l\le 20,\\ r\ge \left\lceil \frac{m_0+\delta}{5} \right\rceil}} \hat{v}(5,l)\cdot h_i(m_0,n-5,r)\cdot 5^{\pi_i(m_0,r) + \phi(l)}\cdot x^{r+l}\label{reliancenm5a2}\\
&+\sum_{r\ge \left\lceil \frac{m_0+\delta}{5} \right\rceil} h_i(m_0,n-5,r)\cdot 5^{\pi_i(m_0,r)}\cdot x^{r}\Bigg).\label{reliancenm5a}
\end{align}  Relabeling our powers of $x$, we have
\begin{align}
U^{(i)}\left( \frac{x^{m_0}}{(1+5x)^{n}} \right) = \frac{1}{(1+5x)^{5n+\kappa}}
\times&\Bigg(\sum_{\substack{1\le k\le 4,\\ 0\le l\le 20,\\ r\ge l+ \left\lceil \frac{m_0+\delta}{5} \right\rceil}} \hat{v}(k,l)\cdot h_i(m_0,n-k,r-l)\cdot 5^{\pi_i(m_0,r-l) + \phi(l)}\cdot x^{r}\label{reliancenm5aaaa}\\
&+\sum_{\substack{1\le l\le 20,\\ r\ge l+ \left\lceil \frac{m_0+\delta}{5} \right\rceil}}\hat{v}(5,l)\cdot h_i(m_0,n-5,r-l)\cdot 5^{\pi_i(m_0,r-l) + \phi(l)}\cdot x^{r}\label{reliancenm5aaaaa}\\
&+\sum_{r\ge \left\lceil \frac{m_0+\delta}{5} \right\rceil} h_i(m_0,n-5,r)\cdot 5^{\pi_i(m_0,r)}\cdot x^{r}\Bigg)\label{reliancenm5aaaaaa}\\
= \frac{1}{(1+5x)^{5n+\kappa}}&\sum_{r\ge \left\lceil \frac{m_0+\delta}{5} \right\rceil} h_i(m_0,n,r)\cdot 5^{\pi_i(m_0,r)}\cdot x^{r}.
\end{align}  Notice that $\pi_i(m,r-l) + \phi(l)\ge \pi_i(m,r)+1$ by Lemma \ref{piphippibound}.  Therefore, if we divide out $5^{\pi_i(m,r)}$, we must have
\begin{align*}
h_i(m,n,r)&\equiv h_i(m,n-5,r)\pmod{5}.
\end{align*}
\end{proof}
By explicitly computing $h_1(m,n,r)$ for any explicit $m,r$, and 5 consecutive values of $n$, as we have done in our Mathematica supplement, we have the following result:
\begin{corollary}\label{congcor1b}
For all $n\in\mathbb{Z}$, with $n\equiv 1\pmod{5},$ we have the following componentwise matrix congruence:
\begin{align}
&\begin{pmatrix}
  h_1(2,n,1) & h_1(3,n,1) & h_1(4,n,1) & h_1(5,n,1) & 0 & 0 & 0 \\
  0 & 0 & h_1(4,n,2) & h_1(5,n,2) & h_1(6,n,2) & h_1(7,n,2) & h_1(8,n,2)
 \end{pmatrix}\nonumber\\ &\equiv \begin{pmatrix}
  1 & 1 & 2 & 1 & 0 & 0 & 0 \\
  0 & 0 & 4 & 0 & 1 & 1 & 1
 \end{pmatrix}\pmod{5}\label{hmodALL}
\end{align}
\end{corollary}

Notice that this gives us the initial part of $\Omega$ as shown in (\ref{omegadefinition}).
\section{Main Theorem}\label{maintheoremsection}
We are now at the point that we may begin proving our main theorem.  Since we know that 
\begin{align*}
\frac{1}{5}L_1\in\mathcal{V}^{(1)}_6,
\end{align*} we want to show that applying $U^{(1)}$ to a member of $\mathcal{V}^{(1)}_n$ will produce a member of $\mathcal{V}^{(0)}_{n'}$ in which all coefficients are divisible by an extra power of 5.  Then applying $U^{(0)}$ to $\mathcal{V}^{(0)}_{n}$, one should achieve a member of $\mathcal{V}^{(1)}_{n'}$.
\subsection{First Intermediate Theorem}
However, this strategy will need to be slightly modified.  We start by proving the following slightly less ambitious theorem:
\begin{theorem}\label{v02v1}
Let $f\in\mathcal{V}_n^{(0)}$.  Then
\begin{align}
U^{(0)}\left( f \right)\in\hat{\mathcal{V}}_{5n+6}.
\end{align}
\end{theorem}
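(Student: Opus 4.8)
The plan is to apply $U^{(0)}$ termwise to the defining expansion of $f$ and repackage the output via Theorem \ref{thmuyox}. Write $f = \frac{1}{(1+5x)^n}\sum_{m\ge 1} s(m)\,5^{\theta_0(m)}\,x^m$ with $s$ discrete. Since $U^{(0)}$ is linear and the sum is finite, Theorem \ref{thmuyox} yields
\begin{align*}
U^{(0)}(f) = \frac{1}{(1+5x)^{5n+6}}\sum_{m\ge 1}\ \sum_{r\ge\lceil(m+1)/5\rceil} s(m)\,h_0(m,n,r)\,5^{\,\theta_0(m)+\pi_0(m,r)}\,x^r.
\end{align*}
Interchanging the two finite sums and collecting powers of $x$ gives $U^{(0)}(f) = \frac{1}{(1+5x)^{5n+6}}\sum_{r\ge 1} t(r)\,x^r$ with $t(r) := \sum_{m\,:\,\lceil(m+1)/5\rceil\le r} s(m)\,h_0(m,n,r)\,5^{\,\theta_0(m)+\pi_0(m,r)}$. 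To land in $\hat{\mathcal{V}}_{5n+6}$ it then suffices to prove (i) $t(1)=0$, and (ii) $5^{\theta_1(r)}\mid t(r)$ for all $r\ge 2$; granting these, $U^{(0)}(f) = \frac{1}{(1+5x)^{5n+6}}\sum_{r\ge 2}\bigl(t(r)/5^{\theta_1(r)}\bigr)5^{\theta_1(r)}x^r$ is exactly of the required shape, and $r\mapsto t(r)/5^{\theta_1(r)}$ is discrete because $s$ and every $h_0(m,n,\cdot)$ are.

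For (i), the only $m\ge 1$ with $\lceil(m+1)/5\rceil=1$ are $m\in\{1,2,3,4\}$, so $t(1) = \sum_{m=1}^{4} s(m)\,h_0(m,n,1)\,5^{\pi_0(m,1)}$, and it is enough that $h_0(m,n,1)=0$ for $m=1,2,3,4$. This is read off the $q$-expansion: $\mathcal{A}$ equals $q^6$ times a power series with constant term $1$, and $x^m/(1+5x)^n$ has $q$-order $m\ge 1$, so $\mathcal{A}\cdot x^m/(1+5x)^n$ has $q$-order $\ge m+6\ge 7$; hence $U^{(0)}\bigl(x^m/(1+5x)^n\bigr) = U_5\bigl(\mathcal{A}\cdot x^m/(1+5x)^n\bigr)$ has $q$-order $\ge\lceil(m+6)/5\rceil\ge 2$. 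Since $(1+5x)^{5n+6}$ is a power series in $q$ with constant term $1$, the numerator $\sum_{r\ge\lceil(m+1)/5\rceil} h_0(m,n,r)5^{\pi_0(m,r)}x^r$ also has $q$-order $\ge 2$, and since $x = q + O(q^2)$ this forces its $x^1$-coefficient $h_0(m,n,1)5^{\pi_0(m,1)}$ to vanish.

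For (ii), it is enough to establish the elementary inequality
\begin{align*}
\theta_0(m) + \pi_0(m,r)\ \ge\ \theta_1(r)\qquad\text{for all } m\ge 1,\ r\ge 1,
\end{align*}
since then every summand of $t(r)$ carries a factor $5^{\theta_1(r)}$. The proof is a finite case split on the branches of the three piecewise-defined exponents: for $r\le 7$ the right-hand side is $0$ while the left is nonnegative; for $r\ge 8$ one distinguishes $m\le 4$ from $m\ge 5$ and, inside each, whether $\pi_0(m,r)$ sits on its positive branch $\lfloor(5r-m+2)/7\rfloor-5$ or is $0$. Each sub-case reduces, via the standard bound $\lfloor a/7\rfloor+\lfloor b/7\rfloor\ge\lfloor(a+b-6)/7\rfloor$ and the hypothesis $m\ge 5$ (resp. $m\le 4$), to a manifestly true comparison of floor functions.

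The main obstacle is the organization of step (ii): although no individual sub-case is hard, one must cover every combination of branches of $\theta_0$, $\theta_1$, and the maximum appearing in $\pi_0$ without gaps, which takes some care. Step (i) is the one point where we step slightly outside the bookkeeping of Theorem \ref{thmuyox} and use the factor $q^6$ in $\mathcal{A}$ directly; everything else is linearity together with the two inequalities above.
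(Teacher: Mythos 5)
Your proposal is correct and follows essentially the same route as the paper: apply Theorem \ref{thmuyox} termwise, interchange the (finite) sums, and reduce everything to the valuation inequality $\theta_0(m)+\pi_0(m,r)\ge\theta_1(r)$ for $5r\ge m+1$, proved by exactly the branch-by-branch case split on $\theta_0$, $\theta_1$, and $\pi_0$ that the paper carries out (its equation (\ref{v02v1nonneg})). Your step (i), using the $q^6$ factor of $\mathcal{A}$ to force the $x^1$-coefficient to vanish so that the expansion genuinely starts at $r\ge 2$ as the definition of $\hat{\mathcal{V}}_{5n+6}$ requires, is a point the paper's proof leaves implicit, and it is handled correctly.
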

\begin{proof}
Let $f\in\mathcal{V}_n^{(0)}$.  Then we can express $f$ as
\begin{align*}
f = \frac{1}{(1+5x)^n}\sum_{m\ge 1} s(m)\cdot 5^{\theta_0(m)}\cdot x^m.
\end{align*}  
\begin{align}
U^{(0)}\left( f \right) &= \sum_{m\ge 1} s(m)\cdot 5^{\theta_0(m)}\cdot U^{(0)}\left( \frac{x^m}{(1+5x)^n}\right)\\
&=\frac{1}{(1+5x)^{5n+6}}\sum_{m\ge 1}\sum_{r\ge \left\lceil (m+1)/5 \right\rceil} s(m)\cdot h_0(m,n,r) 5^{\theta_0(m) + \pi_0(m,r)}\cdot x^r\\
&=\frac{1}{(1+5x)^{5n+6}}\sum_{r\ge 1}\sum_{m\ge 1} s(m)\cdot h_0(m,n,r) 5^{\theta_0(m) + \pi_0(m,r)}\cdot x^r.
\end{align}  We need to prove that for $m+1\le 5r$,
\begin{align}
\theta_0(m) + \pi_0(m,r) - \theta_1(r)\ge 0.\label{v02v1nonneg}
\end{align}  
\subsubsection{}$\left\lceil \frac{m+1}{5} \right\rceil \le r\le \left\lceil \frac{m+33}{5} \right\rceil-1$\\
\noindent We have $\pi_0(m,r)=0$.  Notice also that $m\ge 5r-28$.

For $1\le m\le 4$, we have $\theta_0(m)=0$.  We also have $1 \le r\le 7$, and $\theta_1(r)=0$.  Thus (\ref{v02v1nonneg}) follows trivially.

For $m\ge 5$ we have $\theta_0(m) = \left\lfloor \frac{5m-1}{7} \right\rfloor-2$, and $r\ge 2$.  If $2\le r\le 7$, then (\ref{v02v1nonneg}) again follows trivially.  If $r\ge 8$, then
\begin{align*}
\theta_0(m) + \pi_0(m,r) &= \left(\left\lfloor \frac{5m-1}{7} \right\rfloor - 2\right) + 0 - \left( \left\lfloor \frac{5r-2}{7} \right\rfloor - 5 \right)\\
&\ge \left\lfloor \frac{25m-29}{7} \right\rfloor - \left\lfloor \frac{5r-2}{7} \right\rfloor +3 \\
&\ge \left\lfloor \frac{5r-2}{7} \right\rfloor + \left\lfloor \frac{20r-1}{7} \right\rfloor - \left\lfloor \frac{5r-2}{7} \right\rfloor -1 \\
&\ge \left\lfloor \frac{20r-1}{7} \right\rfloor - 1\ge 0.
\end{align*}
\subsubsection{}$r\ge \left\lceil \frac{m+33}{5} \right\rceil$\\
\noindent We have $\pi_0(m,r)=\left\lfloor \frac{5r-m+2}{7} \right\rfloor - 5$.

For $1\le m\le 4$, we have $\theta_0(m)=0$ and $r\ge 7$.  For $r=7$, $\theta_1(r)=0$, and 
\begin{align*}
\theta_0(m) + \pi_0(m,r) &= \left\lfloor \frac{5r-m+2}{7} \right\rfloor - 5\ge \theta_1(r).
\end{align*}  For $r\ge 8$, we have
\begin{align*}
\theta_0(m) + \pi_0(m,r) &= \left\lfloor \frac{5r-m+2}{7} \right\rfloor - 5\\
&\ge \left\lfloor \frac{5r-2}{7} \right\rfloor - 5 = \theta_1(r).
\end{align*}  For $m\ge 5$,
\begin{align*}
\theta_0(m) + \pi_0(m,r) &= \left\lfloor \frac{5m-1}{7} \right\rfloor - 2 + \left\lfloor \frac{5r-m+2}{7} \right\rfloor - 5\\
&\ge \left\lfloor \frac{5r+4m-5}{7} \right\rfloor - 7\\
&\ge \left(\left\lfloor \frac{5r-2}{7} \right\rfloor - 5\right) + \left\lfloor \frac{4m-3}{7} \right\rfloor - 2\\
&\ge \theta_1(r).
\end{align*}
\end{proof}
\subsection{Second Intermediate Theorem}
So far, this is what one would more or less expect from the classical proofs of congruence families.  However, the matter becomes more complicated on application of $U^{(1)}$, as the following theorem shows:

\begin{theorem}\label{gofrom1to0}
Let $f\in\mathcal{V}_n^{(1)}$ with $n\equiv 1\pmod{5}$.  Then
\begin{align}
\frac{1}{5}U^{(1)}\left( f \right)&\in\mathcal{V}_{5n}^{(0)}.
\end{align}
\end{theorem}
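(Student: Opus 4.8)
The plan is to expand $U^{(1)}(f)$ termwise via Theorem \ref{thmuyox} and show that, for each $r$, the coefficient of $x^r$ carries one more power of $5$ than membership in $\mathcal{V}^{(0)}_{5n}$ demands. Writing $f=(1+5x)^{-n}\sum_{m\ge 2}s(m)\,5^{\theta_1(m)}x^m$ with $(s(m))_{m\ge 2}\in\ker(\Omega)$ of finite support, Theorem \ref{thmuyox} gives
\begin{align*}
U^{(1)}(f)=\frac{1}{(1+5x)^{5n}}\sum_{r\ge 1}\left(\sum_{m=2}^{5r}s(m)\,h_1(m,n,r)\,5^{\theta_1(m)+\pi_1(m,r)}\right)x^r,
\end{align*}
so the task reduces to proving, for every $r\ge 1$,
\begin{align*}
\sum_{m=2}^{5r}s(m)\,h_1(m,n,r)\,5^{\theta_1(m)+\pi_1(m,r)}\equiv 0\pmod{5^{1+\theta_0(r)}}.
\end{align*}
Once this is established, the coefficient of $x^r$ equals $5^{1+\theta_0(r)}\tilde{s}(r)$ with $\tilde{s}(r)\in\mathbb{Z}$, and therefore $\tfrac15 U^{(1)}(f)=(1+5x)^{-5n}\sum_{r\ge 1}\tilde{s}(r)\,5^{\theta_0(r)}x^r\in\mathcal{V}^{(0)}_{5n}$.

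First I would prove the ``generic'' exponent estimate $\theta_1(m)+\pi_1(m,r)\ge 1+\theta_0(r)$ for all $2\le m\le 5r$, by an elementary case analysis on the ranges of $m$ and $r$, entirely in the spirit of the floor-function manipulations already carried out in the proof of Theorem \ref{v02v1} and in Lemma \ref{piphippibound}. A direct check then shows that the only pairs $(r,m)$ in that range for which the strict inequality fails --- i.e.\ for which $\theta_1(m)+\pi_1(m,r)=\theta_0(r)$ --- are $r=1$ with $m\in\{2,3,4,5\}$ and $r=2$ with $m\in\{4,5,6,7,8\}$, and in all of these $\theta_0(r)=0$. Consequently the target congruence holds term by term whenever $r\ge 3$, and for $r\in\{1,2\}$ every non-boundary term is already divisible by $5^{1+\theta_0(r)}=5$.

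It remains to handle the boundary contributions for $r=1$ and $r=2$. Since $n\equiv 1\pmod 5$, iterating Theorem \ref{congcor1a} gives $h_1(m,n,r)\equiv h_1(m,1,r)\pmod 5$ for all $m,r\ge 1$, so the residues of the boundary $h_1$-values are exactly those recorded in Corollary \ref{congcor1b}. For $r=1$ the boundary sum reduces, by \eqref{hmod222A} and \eqref{hmod222AB}, to $s(2)+s(3)+2s(4)+s(5)\pmod 5$, which is precisely the first coordinate of $\Omega\bigl((s(m))_{m\ge 2}\bigr)$ and hence vanishes mod $5$. For $r=2$ the boundary sum reduces, by \eqref{hmod222ABc}, \eqref{hmod222ABd}, \eqref{hmod222ABe}, to $4s(4)+s(6)+s(7)+s(8)\equiv 4\bigl(s(4)+4s(6)+4s(7)+4s(8)\bigr)\pmod 5$, and $s(4)+4s(6)+4s(7)+4s(8)$ is precisely the second coordinate of $\Omega\bigl((s(m))_{m\ge 2}\bigr)$, hence again $\equiv 0\pmod 5$. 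This yields the target congruence for $r=1,2$ as well, and completes the proof.

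The main obstacle is the last step: one must recognize that the two boundary linear forms, after reducing the $h_1$-coefficients modulo $5$ via Theorem \ref{congcor1a}, coincide up to a unit with the two defining linear forms of $\ker(\Omega)$. This is exactly the internal algebraic structure emphasized in the introduction, and it is what makes both the hypothesis $n\equiv 1\pmod 5$ and the explicit residues in Corollary \ref{congcor1b} indispensable --- without either, the boundary terms would not cancel. By comparison, the floor-function bookkeeping of the first step is routine, though it must be carried out carefully enough to be certain no boundary pair is missed for $r\ge 3$.
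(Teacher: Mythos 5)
Your proposal is correct and follows essentially the same route as the paper's proof: expand termwise via Theorem \ref{thmuyox}, verify the exponent inequality $\theta_1(m)+\pi_1(m,r)\ge\theta_0(r)+1$ away from exactly the boundary pairs $r=1$, $2\le m\le 5$ and $r=2$, $4\le m\le 8$, and then annihilate those two boundary sums modulo $5$ using the residues of Corollary \ref{congcor1b} (reduced to $n\equiv 1\pmod 5$ via Theorem \ref{congcor1a}) together with the hypothesis $(s(m))_{m\ge 2}\in\ker(\Omega)$. Your observation that the $r=2$ boundary form equals the unit multiple $4\bigl(s(4)+4s(6)+4s(7)+4s(8)\bigr)$ of the second coordinate of $\Omega$ is, if anything, slightly more explicit than the paper's own identification.
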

\begin{proof}
Let us take a function $f$ of the form
\begin{align*}
f &= \frac{1}{(1+5x)^n}\sum_{m\ge 1} s(m)\cdot 5^{\theta_1(m)}\cdot x^m.
\end{align*}  Applying $U^{(1)}$, we have
\begin{align*}
U^{(1)}\left( f \right) &= \frac{1}{(1+5x)^{5n}}\sum_{m\ge 2}\sum_{r\ge \left\lceil m/5 \right\rceil} s(m)\cdot h_1(m,n,r)\cdot 5^{\theta_1(m) + \pi_1(m,r)}\cdot x^r\\
&= \frac{1}{(1+5x)^{5n}}\sum_{r\ge 1}\sum_{m\ge 2} s(m)\cdot h_1(m,n,r)\cdot 5^{\theta_1(m) + \pi_1(m,r)}\cdot x^r.
\end{align*}  For $m\le 5r$ we consider the relation
\begin{align}
\theta_1(m) + \pi_1(m,r) - \theta_0(r) - 1 \ge 0.\label{v12v0nonneg}
\end{align}  We first verify that (\ref{v12v0nonneg}) is true for $r\ge 3$.
\subsubsection{}$r=3$
Here $\theta_0(3)=0$, and $m$ ranges over $2\le m\le 15$.

For $2\le m\le 7$, $\theta_1(m)=0$ and
\begin{align*}
\theta_1(m) + \pi_1(m,3) &= \left\lfloor \frac{15-m}{7} \right\rfloor\\
&\ge \left\lfloor \frac{15-7}{7} \right\rfloor\\
&\ge 1 = \theta_0(3) + 1.
\end{align*}

For $m=8$,
\begin{align*}
\theta_1(8) + \pi_1(8,3) &= 2 + 1\ge 1 = \theta_0(3) + 1.
\end{align*}

For $m\ge 9$,
\begin{align*}
\theta_1(m) + \pi_1(m,3) &= \left( \left\lfloor \frac{5m-2}{7} \right\rfloor - 5 \right)+ \left\lfloor \frac{15-m}{7} \right\rfloor\\
&\ge \left\lfloor \frac{15+4m-8}{7} \right\rfloor - 5\\
&\ge 1 = \theta_0(3) + 1.
\end{align*}
\subsubsection{}$r=4$
Here $\theta_0(4)=0$, and $m$ ranges over $2\le m\le 20$.

For $2\le m\le 7$, $\theta_1(m)=0$ and
\begin{align*}
\theta_1(m) + \pi_1(m,4) &= \left\lfloor \frac{20-m}{7} \right\rfloor\\
&\ge \left\lfloor \frac{20-7}{7} \right\rfloor\\
&\ge 1 = \theta_0(4) + 1.
\end{align*}

For $m\ge 8$, 
\begin{align*}
\theta_1(m) + \pi_1(m,4) &= \left( \left\lfloor \frac{5m-2}{7} \right\rfloor - 5 \right) +\left\lfloor \frac{20-m}{7} \right\rfloor\\
&\ge \left\lfloor \frac{20+4m-8}{7} \right\rfloor - 5\\
&\ge 1 = \theta_0(4) + 1.
\end{align*}
\subsubsection{}$r\ge 5$
Here $\theta_0(r)=\left( \left\lfloor \frac{5r-1}{7} \right\rfloor - 2 \right)$, and $m\ge 2$.

For $2\le m\le 7$, $\theta_1(m)=0$ and
\begin{align*}
\theta_1(m) + \pi_1(m,r) &= \left\lfloor \frac{5r-7}{7} \right\rfloor\\
&\ge \left\lfloor \frac{5r}{7} \right\rfloor - 1\\
&\ge \left\lfloor \frac{5r-1}{7} \right\rfloor - 1\\
&=\theta_0(r)+1.
\end{align*}

For $m=8$, 
\begin{align*}
\theta_1(8) + \pi_1(8,r) &= 0 + \left\lfloor \frac{5r-8}{7} \right\rfloor\\
&= \left\lfloor \frac{5r-1}{7} \right\rfloor - 1\\
&= \theta_0(r) + 1.
\end{align*}

For $m\ge 9$, 
\begin{align*}
\theta_1(m) + \pi_1(m,r) &= \left( \left\lfloor \frac{5m-2}{7} \right\rfloor - 5 \right) + \left\lfloor \frac{5r-m}{7} \right\rfloor\\
&\ge \left\lfloor \frac{5r+4m-8}{7} \right\rfloor - 5\\
&\ge \left\lfloor \frac{5r-1}{7} \right\rfloor + \left\lfloor \frac{4m}{7} \right\rfloor - 6\\
&\ge \left\lfloor \frac{5r-1}{7} \right\rfloor - 1\\
&= \theta_0(r) + 1.
\end{align*}
\subsubsection{}$1\le r\le 2$
Let us first consider the case for $r=1$.  We know that we will receive a contribution for $2\le m\le 5$.  In each of these cases, we see that
\begin{align*}
\theta_1(m) + \pi_1(m,1) &= 0 + \left\lfloor \frac{5-m}{7} \right\rfloor=0\\
&\neq \theta_0(1)+1=1.
\end{align*}  On the other hand, for $r=2$, we have already shown that we get the correct contribution for $m\ge 9$.  Moreover, for $2\le m\le 3$,
\begin{align*}
\theta_1(m) + \pi_1(m,2) &= 0 + \left\lfloor \frac{10-m}{7} \right\rfloor\ge 1\\
&=\theta_0(2)+1.
\end{align*}  However, for $4\le m\le 7$,
\begin{align*}
\theta_1(m) + \pi_1(m,2) &= 0 + \left\lfloor \frac{10-m}{7} \right\rfloor=0\\
&\neq \theta_0(1)+1=1.
\end{align*}

The only possible resolution to this problem is that 
\begin{align*}
\sum_{m=2}^5 s(m)\cdot h_1(m,n,1)&\equiv 0\pmod{5},\\
\sum_{m=4}^8 s(m)\cdot h_1(m,n,2)&\equiv 0\pmod{5}.
\end{align*}  By Corollary \ref{congcor1b}, we can reduce this to
\begin{align}
s(2)+s(3)+2s(4)+s(5)&\equiv 0\pmod{5},\label{kernelmotivationA}\\
4s(4)+s(6)+s(7)+s(8)&\equiv 0\pmod{5}.\label{kernelmotivationB}
\end{align}  But notice that these are the components of the image of $\Omega$ applied to $(s(m))_{m\ge 2}$.  Because $(s(m))_{m\ge 2}\in\mathrm{ker}\left( \Omega \right)$, these relations must both reduce to 0, and we have accounted for the necessary powers of 5 in all cases.
\end{proof}
\subsection{Stability Theorem}
We now see the necessity of the kernel properties of $\mathcal{V}_n^{(1)}$.  However, we have not done enough.  We need to prove \textit{stability} of the properties of $\mathcal{V}_n^{(1)}$.  That is, we need to show the following:

\begin{theorem}\label{oddbacktoodd}
Let $f\in\mathcal{V}_n^{(1)}$, with $n\equiv 1\pmod{5}$.  Then
\begin{align}
\frac{1}{5}U^{(0)}\circ U^{(1)}\left( f \right)&\in\mathcal{V}_{25n+6}^{(1)}.\label{finalstabilitycheck}
\end{align}
\end{theorem}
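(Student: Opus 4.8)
The plan is to derive (\ref{finalstabilitycheck}) by composing the two preceding theorems and then upgrading the conclusion from $\hat{\mathcal{V}}$ to $\mathcal{V}^{(1)}$ through a bounded residue computation for $h_0$. First I would set $g:=\tfrac{1}{5}U^{(1)}(f)$. Since $f\in\mathcal{V}_n^{(1)}$ and $n\equiv 1\pmod 5$, Theorem \ref{gofrom1to0} gives $g\in\mathcal{V}_{5n}^{(0)}$, and then Theorem \ref{v02v1} (applied with $5n$ in place of $n$) together with the linearity of $U^{(0)}$ yields $\tfrac{1}{5}U^{(0)}\circ U^{(1)}(f)=U^{(0)}(g)\in\hat{\mathcal{V}}_{25n+6}$. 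As $\hat{\mathcal{V}}_{25n+6}$ and $\mathcal{V}_{25n+6}^{(1)}$ differ only by the requirement that the numerator coefficient sequence lie in $\ker(\Omega)$, it remains to prove: writing $U^{(0)}(g)=\tfrac{1}{(1+5x)^{25n+6}}\sum_{m\ge 2}s'(m)\,5^{\theta_1(m)}\,x^m$, we have $\big(s'(m)\big)_{m\ge 2}\in\ker(\Omega)$. In fact it is cleaner to establish the stronger, $f$-free statement that $U^{(0)}$ carries all of $\mathcal{V}_{5n}^{(0)}$ into $\mathcal{V}_{25n+6}^{(1)}$; the hypotheses on $f$ and $n$ are used only to place $g$ inside $\mathcal{V}_{5n}^{(0)}$.

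Next I would localize to the low-degree coefficients. Writing $g=\tfrac{1}{(1+5x)^{5n}}\sum_{j\ge 1}u(j)\,5^{\theta_0(j)}\,x^j$ and expanding $U^{(0)}(g)$ through Theorem \ref{thmuyox} gives
\[
s'(\rho)=\sum_{1\le j\le 5\rho-1}u(j)\,h_0(j,5n,\rho)\,5^{\theta_0(j)+\pi_0(j,\rho)-\theta_1(\rho)}.
\]
For $\rho\in\{2,\dots,8\}$ one checks straight from the piecewise definitions that $\theta_1(\rho)=0$, that $\theta_0(j)+\pi_0(j,\rho)=0$ precisely for $j\in\{1,2,3,4\}$ (and is $\ge 1$ for every $j\ge 5$), with nonnegativity of the exponent throughout being the content of Theorem \ref{v02v1}. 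Hence all terms with $j\ge 5$ drop out modulo $5$, leaving $s'(\rho)\equiv\sum_{j=1}^{4}u(j)\,h_0(j,5n,\rho)\pmod 5$ for $\rho\in\{2,\dots,8\}$. Substituting into the two defining relations of $\Omega$, it suffices to show that for each $j\in\{1,2,3,4\}$ one has $h_0(j,5n,2)+h_0(j,5n,3)+2h_0(j,5n,4)+h_0(j,5n,5)\equiv 0\pmod 5$ and $h_0(j,5n,4)+4h_0(j,5n,6)+4h_0(j,5n,7)+4h_0(j,5n,8)\equiv 0\pmod 5$; both coordinates of $\Omega\big((s'(m))_{m\ge 2}\big)$ then vanish regardless of the $u(j)$.

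Finally I would discharge these congruences as a finite check. By Theorem \ref{congcor1a}, $h_0(j,5n,\rho)\equiv h_0(j,5,\rho)\pmod 5$, so only one value of the middle argument need be inspected; the residues $h_0(j,5,\rho)\bmod 5$ for $j\in\{1,2,3,4\}$ and $\rho\in\{2,\dots,8\}$ are read off from the initial relations for $U^{(0)}(x^j)$ recorded in the Appendix, the modular equation (\ref{modZ}) being used to normalize the power of $1+5x$. The two displayed combinations then collapse to $0$ modulo $5$ — this is exactly the $h_0$-analogue of Corollary \ref{congcor1b}, used here as Corollary \ref{congcor1b} was used in the proof of Theorem \ref{gofrom1to0} — and the routine arithmetic is relegated to the Mathematica supplement. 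With $\Omega\big((s'(m))_{m\ge 2}\big)=0$ we conclude $U^{(0)}(g)\in\mathcal{V}_{25n+6}^{(1)}$, which is (\ref{finalstabilitycheck}).

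The hard part is conceptual rather than computational: recognizing that the return trip from $\hat{\mathcal{V}}$ to $\mathcal{V}^{(1)}$ is governed entirely by the critical-degree residues of $h_0$, and that these residues are arranged to satisfy precisely the two linear relations cut out by $\Omega$ — the mirror of the phenomenon that forces $f$ into $\ker(\Omega)$ in order for $\tfrac{1}{5}U^{(1)}(f)$ to be integral. Once that is seen, the argument is a bounded verification; the only other subtleties are confirming $\theta_1(\rho)=0$ for all $\rho\le 8$ and that no index $j\ge 5$ contributes at the critical power of $5$, since these are exactly what license truncating the sums defining $s'(\rho)$ modulo $5$.
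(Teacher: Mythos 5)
Your argument is correct, but at the decisive step it takes a genuinely different route from the paper. The paper never proves your stronger, $f$-free claim that $U^{(0)}$ carries all of $\mathcal{V}^{(0)}_{5n}$ into $\mathcal{V}^{(1)}_{25n+6}$; instead it expands the composite coefficients of $\tfrac15 U^{(0)}\circ U^{(1)}(f)$ directly as (a priori rational) linear forms $\hat t(w)$ in $s(2),\dots,s(8)$, reduces the $h$-arguments via Theorem \ref{congcor1a}, and then uses polynomial reduction modulo the ideal $I$ generated by the two $\Omega$-relations both to show $\hat t(2),\hat t(3),\hat t(7),\hat t(8)$ are integers and to kill the two $\Omega$-combinations of the $\hat t(w)$ --- so the input kernel condition on $(s(m))$ is invoked a second time there, and the vanishing holds only modulo $I$, not identically. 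You instead quarantine the kernel hypothesis inside Theorem \ref{gofrom1to0} (producing $g=\tfrac15 U^{(1)}(f)\in\mathcal{V}^{(0)}_{5n}$) and then argue that the output kernel condition holds for an \emph{arbitrary} element of $\mathcal{V}^{(0)}_{5n}$: your bookkeeping is right that for $\rho\in\{2,\dots,8\}$ one has $\theta_1(\rho)=0$ and $\pi_0(j,\rho)=0$, while $\theta_0(j)\ge 1$ for $j\ge 5$, so modulo $5$ only $j\in\{1,2,3,4\}$ contribute, and everything reduces to eight congruences on $h_0(j,5n,\rho)$. That finite check does succeed: by Theorem \ref{congcor1a} it suffices to read the Appendix expansions of $U^{(0)}(x^j)$, whose coefficient residues modulo $5$ for $\rho=2,\dots,8$ are $(1,4,0,0,0,4,1)$ for $j=1$, $(3,2,0,0,0,2,3)$ for $j=2$, all zero for $j=3$, and $(1,4,0,0,0,4,1)$ for $j=4$, and in each case $h_0(j,\cdot,2)+h_0(j,\cdot,3)+2h_0(j,\cdot,4)+h_0(j,\cdot,5)\equiv 0$ and $h_0(j,\cdot,4)+4h_0(j,\cdot,6)+4h_0(j,\cdot,7)+4h_0(j,\cdot,8)\equiv 0\pmod 5$. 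So your proof is sound once this $h_0$-analogue of Corollary \ref{congcor1b} is explicitly stated and verified (the paper does not record it, so you must supply that lemma and its computation). What your route buys is a cleaner structural picture --- the $\ker(\Omega)$ constraint is needed only to survive the $U^{(1)}$ step, while $U^{(0)}$ preserves the kernel property unconditionally --- at the price of a new finite verification; the paper's bilinear $\hat t(w)$ computation avoids isolating any property of $h_0$ alone, but must handle rational intermediate quantities and ideal reduction, and proves only the composite statement actually needed for the induction.
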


It is this which will finally allow us to complete the proof of Theorem \ref{Mythm}, and with it Theorem \ref{Thm12}.
\begin{proof}
We suppose that $f$ has the form
\begin{align*}
f &= \frac{1}{(1+5x)^n}\sum_{m\ge 1} s(m)\cdot 5^{\theta_1(m)}\cdot x^m.
\end{align*}  We proved in Theorem \ref{gofrom1to0} above that
\begin{align*}
\frac{1}{5}U^{(1)}\left( f \right)&\in\mathcal{V}_{5n}^{(0)}.
\end{align*}   Moreover, we know from Theorem \ref{v02v1} that
\begin{align*}
\frac{1}{5}U^{(0)}\circ U^{(1)}\left( f \right)&\in\hat{\mathcal{V}}_{25n+6}.
\end{align*}  Because of this, we can expand
\begin{align*}
\frac{1}{5}U^{(0)}\circ U^{(1)}\left( f \right) &= \frac{1}{(1+5x)^{25n+6}}\sum_{w\ge 2}t(w)\cdot 5^{\theta_1(w)}\cdot x^w,
\end{align*} with $t(w)$ defined as
\begin{align*}
t(w) :=& \sum_{r=1}^{5w-6}\sum_{m=1}^{5r}s(m)\cdot h_1(m,n,r)\cdot h_0(r,5n,w)\\ &\times 5^{\theta_1(m) + \pi_1(m,r) + \pi_0(r,w) -\theta_1(w) -1}.
\end{align*}  All that remains is to prove that $\left( t(w) \right)_{w\ge 2}\in\mathrm{ker}\left( \Omega \right)$.

Notice that for any fixed $w$, the values $m,r$ are bounded.  The variable $n$ is unbounded.  However, we can now take advantage of Theorem \ref{congcor1a}, and the fact that $n\equiv 1\pmod{5}$ by hypothesis, and we define
\begin{align*}
\hat{t}(w) :=& \sum_{r=1}^{5w-6}\sum_{m=1}^{5r}s(m)\cdot h_1(m,1,r)\cdot h_0(r,5,w)\\ &\times 5^{\theta_1(m) + \pi_1(m,r) + \pi_0(r,w) -1},
\end{align*}  Obviously, $\hat{t}(w)\equiv t(w)\pmod{5}$.  If we examine $\hat{t}(w)$ for $2\le w\le 8$, we have
\begin{align*}
\hat{t}(2) =& \frac{1}{5}\left( -624 s(2) - 1664 s(3) + 94204 s(4) + 99616 s(5) + 57078 s(6) + 19008 s(7) + 3708 s(8) \right),\\
\hat{t}(3) =& \frac{1}{5}\left( 28224 s(2) + 75264 s(3) - 3621954 s(4) - 3834516 s(5) - 2197503 s(6) - 731808 s(7) - 142758 s(8) \right),\\
\hat{t}(4) =& 715008 s(2) + 1906688 s(3) - 67390288 s(4) - 71546272 s(5) - 41020056 s(6) - 13660416 s(7) - 2664816 s(8),\\
\hat{t}(5) =& 25337256 s(2) + 67566016 s(3) - 6656426 s(4) - 33820304 s(5) - 21775257 s(6) - 7251552 s(7) - 1414602 s(8),\\
\hat{t}(6) =& 457837968 s(2) + 1220901248 s(3) + 140880948572 s(4) + 147501656288 s(5) + 84383715654 s(6)\\ &+ 28101294144 s(7) + 5481881244 s(8),\\
\hat{t}(7) =& \frac{1}{5}( 18893919144 s(2) + 50383784384 s(3) + 38559332136626 s(4) + 40484108853704 s(5) + 23170599952857 s(6)\\ &+ 7716226285152 s(7) + 1505248688202 s(8) ),\\
\hat{t}(8) =& \frac{1}{5}( -116748977604 s(2) - 311330606944 s(3) + 1303629422734184 s(4) + 1369503027522686 s(5)\\ &+ 783890939008863 s(6) + 261049773444768 s(7) + 50924482319718 s(8) ).
\end{align*}  This computation is explicitly carried out in our Mathematica supplement.

It is clear that (so long as $s(m)$ is always integer-valued) $\hat{t}(4), \hat{t}(5), \hat{t}(6)\in \mathbb{Z}$.  For the remainder, we define the ideal 
\begin{align*}
I := \left< s(2)+s(3)+2s(4)+s(5), 4s(4)+s(6)+2s(7)+s(8) \right>\le \mathbb{Z}/5\mathbb{Z}[s(2),s(3),...,s(8)].
\end{align*}  By a simple polynomial reduction through a computer algebra system, it can be shown that
\begin{align*}
5\hat{t}(2), 5\hat{t}(3), 5\hat{t}(7), 5\hat{t}(8)\in I.
\end{align*}  This ensures that
\begin{align*}
\hat{t}(2), \hat{t}(3), \hat{t}(7), \hat{t}(8)\in \mathbb{Z}.
\end{align*}  We now need to verify that
\begin{align}
\hat{t}(2)+\hat{t}(3)+2\hat{t}(4)+\hat{t}(5)&\equiv 0\pmod{5},\label{firstreltgt0}\\
4\hat{t}(4)+\hat{t}(6)+\hat{t}(7)+\hat{t}(8)&\equiv 0\pmod{5}.\label{firstreltgt1}
\end{align}  Expanding (\ref{firstreltgt0}) and (\ref{firstreltgt1}), we find that
\begin{align}
\hat{t}(2)+\hat{t}(3)+2\hat{t}(4)+\hat{t}(5)=& 26772792 s(2) + 71394112 s(3) - 142142552 s(4)\\ &- 177659828 s(5) - 104243454 s(6) - 34714944 s(7) - 6772044 s(8),\nonumber\\
4\hat{t}(4)+\hat{t}(6)+\hat{t}(7)+\hat{t}(8)=& -19110313692 s(2) - 50960836512 s(3) + 268578362361582 s(4)\\ &+ 282144642746478 s(5) + 161496527427774 s(6) + 53781246598464 s(7)\nonumber\\ &+ 
 10491417423564 s(8).\nonumber
\end{align}  We reduce both of these polynomials modulo $I$ to achieve 0.

Because $t(w)\equiv \hat{t}(w)\pmod{5}$, we have $\left( t(w) \right)_{w\ge 2}\in\mathrm{ker}\left( \Omega \right)$, and we have therefore proved (\ref{finalstabilitycheck}).
\end{proof}
We at last have enough that we can complete our main theorem.
\begin{proof}[Proof of Theorem \ref{Mythm}]
From (\ref{isittruethough})-(\ref{isittruethoughb}), we know that
\begin{align}
L_1 = U^{(0)}(1).\label{aboutL1andu0}
\end{align}  We will prove this along with (\ref{L1defninx}) in Section \ref{initialrelationssection} below.

We already proved in (\ref{L1inducstage1b}) of Section \ref{algebrastructure} that 
\begin{align*}
\frac{1}{5}\cdot L_1 = f_1 \in\mathcal{V}^{(1)}_6.
\end{align*}  Let us now suppose that for some $\alpha\ge 1$, we have
\begin{align*}
\frac{1}{5^{\alpha}}\cdot L_{2\alpha-1}\in\mathcal{V}_n^{(1)}.
\end{align*}   Then there exists some $f_{2\alpha-1}\in\mathcal{V}_n^{(1)}$ such that
\begin{align}
L_{2\alpha-1} = 5^{\alpha}\cdot f_{2\alpha-1}.
\end{align}  On applying $U^{(1)}$, we have
\begin{align}
L_{2\alpha} = U_5\left(L_{2\alpha-1} \right) = U_5\left( 5^{\alpha}\cdot f_{2\alpha-1} \right) = 5^{\alpha}\cdot U^{(1)}\left( f_{2\alpha-1} \right).
\end{align}  But by Theorem \ref{gofrom1to0}, there exists some $f_{2\alpha}\in\mathcal{V}_{5n}^{(0)}$ such that
\begin{align}
U^{(1)}\left( f_{2\alpha-1} \right) = 5\cdot f_{2\alpha}\label{important101et}
\end{align}  Therefore, we have
\begin{align}
L_{2\alpha} = 5^{\alpha+1}\cdot f_{2\alpha}.
\end{align}  Moreover, on applying $U^{(0)}$ we have
\begin{align}
L_{2\alpha+1} = U_5\left( \mathcal{A}\cdot L_{2\alpha} \right) = U_5\left( \mathcal{A}\cdot 5^{\alpha+1}\cdot f_{2\alpha}\right) = 5^{\alpha+1}\cdot U^{(0)}\left( f_{2\alpha} \right).\label{prooffinalstep2ap1}
\end{align}  We recall from (\ref{important101et}) that
\begin{align}
U^{(0)}\left( f_{2\alpha} \right)=\frac{1}{5}U^{(0)}\circ U^{(1)}\left( f_{2\alpha-1} \right).
\end{align}  Because $f_{2\alpha-1}\in\mathcal{V}_{n}^{(1)}$, we know that by Theorem \ref{oddbacktoodd},
\begin{align}
U^{(0)}\left( f_{2\alpha} \right)\in\mathcal{V}_{25n+6}^{(1)}.
\end{align}  Therefore, there exists some $f_{2\alpha+1}\in\mathcal{V}_{25n+6}^{(1)}$ such that
\begin{align}
U^{(0)}\left( f_{2\alpha} \right) = f_{2\alpha+1}.
\end{align}  Inserting this into \ref{prooffinalstep2ap1}, we have
\begin{align}
L_{2\alpha+1} = 5^{\alpha+1}\cdot f_{2\alpha+1}.
\end{align}  By induction, we have
\begin{align*}
\frac{L_{2\alpha-1}}{5^{\alpha}}&\in\mathcal{V}_{n_{2\alpha-1}}^{(1)},\\
\frac{L_{2\alpha}}{5^{\alpha+1}}&\in\mathcal{V}_{n_{2\alpha}}^{(0)}.
\end{align*}  We need only verify that
\begin{align*}
n_{2\alpha-1} &=\psi(2\alpha-1),\\
n_{2\alpha} &=\psi(2\alpha)-1.
\end{align*}  We note that for all $\alpha\ge 1$,
\begin{align*}
5^{\alpha}&\equiv 1\pmod{4}.
\end{align*}  We can then write
\begin{align*}
\psi(\alpha)=\left\lfloor \frac{5^{\alpha+1}}{4} \right\rfloor &= \frac{5^{\alpha+1}}{4} - \frac{1}{4}.
\end{align*}  With this in mind we can quickly verify that
\begin{align*}
5\cdot\psi(\alpha) &= 5\cdot \left\lfloor \frac{5^{\alpha+1}}{4} \right\rfloor\\
&= 5\cdot \left(\frac{5^{\alpha+1}}{4} - \frac{1}{4}\right)\\
&= \frac{5^{\alpha+2}}{4} - \frac{1}{4} - 1\\
&= \psi(\alpha+1) - 1.
\end{align*}  We then have
\begin{align*}
5\cdot\psi(2\alpha-1) &= \psi(2\alpha) - 1,\\
5\cdot\left(\psi(2\alpha)-1\right)+6 &= 5\cdot\psi(2\alpha)-5+6 = \psi(2\alpha+1).
\end{align*}  Because applying $U^{(1)}$ to a rational polynomial with a denominator of $(1+5x)^n$ will result in a rational polynomial with denominator $(1+5x)^{5n}$, and similarly applying $U^{(0)}$ will result in a denominator $(1+5x)^{5n+6}$, we need only verify that the denominator of $L_1$ has power $\psi(1)=6$.  This is true from (\ref{L1defninx}).
\end{proof}
\section{Initial Relations}\label{initialrelationssection}
We now need to finish the proofs of Theorem \ref{thmuyox}, Corollary \ref{congcor1b}, and the modular equation (\ref{modX}).  We also want to prove (\ref{L1defninx}).

To finish the proof of Theorem \ref{thmuyox}, we need to verify the existence of $h_i(m,n,r)$ for $1\le m\le 5$, $1\le n\le 5$.  This gives us 25 initial relations; for $i=0,1$, we have 50 such relations.

However, these relations are algebraically dependent on a much smaller number of relations.  Notice that
\begin{align}
U^{(i)}\left( \frac{x^{m}}{(1+5x)^{n}} \right) =& \frac{1}{5^m}\cdot U^{(i)}\left( \frac{(z-1)^{m}}{z^{n}} \right)\\
=& \frac{1}{5^m}\sum_{r=0}^{m}(-1)^{m-r}{{m}\choose{r}}\cdot U^{(i)}\left( z^{r-n} \right)\\
=& \frac{1}{5^m}\sum_{r=0}^{m}(-1)^{m-r}{{m}\choose{r}}\cdot U^{(i)}\left( (1+5x)^{r-n} \right).
\end{align}  Therefore if we understand $U^{(i)}(z^n)$ for all $n$, then we can compute $U^{(i)}\left(x^m/(1+5x)^n\right)$.  Of course,
\begin{align}
U^{(i)}\left( (1+5z)^n \right) &= -\sum_{k=0}^{n} b_k(\tau)\cdot U^{(i)}\left( (1+5x)^{k+n-5} \right).
\end{align}  If we consider $U^{(i)}\left( (1+5z)^n \right)$ for positive $n$, then we have
\begin{align}
U^{(i)}\left( (1+5x)^n \right) &= \sum_{k=0}^n {{n}\choose{k}}\cdot 5^k\cdot U^{(i)}\left( x^k \right).
\end{align}  So to completely construct $U^{(i)}\left(x^m/(1+5x)^n\right)$, we need to know how to construct $U^{(i)}(x^n)$ for all integers $n$.  Using (\ref{modX}), we only need five initial relations, e.g., the relations for $0\le n\le 4$.  Again accounting for two different values of $i$, this gives us a total of ten initial relations.  We give the explicit relations in the Appendix below.

Notice that by (\ref{aboutL1andu0}), we can prove (\ref{L1defninx}) by deriving the relation for $U^{(0)}(1)$.  This is one of our ten initial relations.

From these relations, one can work algorithmically to the construction of $U^{(i)}\left(x^m/(1+5x)^n\right)$ for any $m,n\in\mathbb{Z}$.  We provide such a construction in our Mathematica supplement, which can be found online at \url{https://www.risc.jku.at/people/nsmoot/d5congsuppA.nb}.  We emphasize that our construction is by no means the most efficient.  Rather, we have sacrificed efficiency for an easily comprehensible construction process.  Nevertheless, the 50 initial cases for Theorem \ref{thmuyox} can still be constructed almost immediately.

Once these relations are constructed, we can quickly compute $h_i(m,n,r)$.  Due to Theorem \ref{congcor1a}, we can verify Corollary \ref{congcor1b} by checking the congruence properties of $h_i(m,n,r)$, with the appropriate $m,r$, for $1\le n\le 5$.  This is also computed in our Mathematica supplement.
\subsection{Computing the Fundamental Relations}
We need to verify the relations (\ref{initial10})-(\ref{initial04}) in the Appendix.  Notice that these relations have the form
\begin{align}
U^{(1)}\left( x^l \right) &=p_{1,l}(x)\in\mathbb{Z}[x],\\
z^6\cdot U^{(0)}\left( x^l \right) &=p_{0,l}(x)\in\mathbb{Z}[x],
\end{align} for $0\le l\le 4$.

On the other hand, $x,z$ have poles at the cusp $[0]_{10}$.  If we divide both sides of each relation by the highest power of $x$ appearing on the right-hand side, then we have 
\begin{align}
\frac{1}{x^{m}}\cdot U^{(1)}\left( x^l \right) &\in\mathbb{Z}[x^{-1}]\subseteq\mathcal{M}^{\infty}(\Gamma_0(10)),\label{initialTypeA}\\
\frac{1}{x^{m}}\cdot z^6\cdot U^{(0)}\left( x^l \right) &\in\mathbb{Z}[x^{-1}]\subseteq\mathcal{M}^{\infty}(\Gamma_0(10)).\label{initialTypeB}
\end{align}  This would more convenient to check, given that it is much easier to compute the principal part of a modular function with a pole only at $[\infty]_{10}$.  Using Ligozat's theorem, we can compute the order of $\mathcal{A}(\tau)$, $x(\tau)$, $x(5\tau)$, $z(5\tau)$ as functions on the congruence subgroup $\Gamma_0(50)$.
\begin{table}[hbt!]
\begin{center}
\begin{tabular}{l|c|c|c|r}
 Cusp Representative      &$\mathcal{A}(\tau)$  & $x(\tau)$ & $x(5\tau)$ & $z(5\tau)$\\
\hline
 $\infty$      & 6          & 1 & 5 & 0  \\
 $1/25$         & 27          & 0 & 0 & 0  \\
 $1/10$         & 0          & 1 & 0 & 1  \\
 $1/5$         & 0          & 0 & -1& -1 \\
 $3/10$         & 0         & 1 & 0 & 1  \\
 $2/5$         & 0          & 0 & -1 & -1  \\
$1/2$          & -6          & 0 & 0 & 1  \\
 $3/5$         & 0          & 0 & -1 & -1  \\
 $7/10$         & 0          & 1 & 0 & 1  \\
 $4/5$         & 0          & 0 & -1 & -1  \\
 $9/10$         & 0          & 1 & 0 & 1  \\
$0$             & -27         & -5 & -1 & -1  
 \end{tabular}
\caption{Order at Cusps of $\mathrm{X}_0(50)$}\label{tablew0}
\end{center}
\end{table}

We can now pull the factors $1/x^m$ and $z^6$ inside the $U^{(i)}$ operators, so that we have
\begin{align*}
\frac{1}{x^{m}}\cdot U^{(1)}\left( x^l \right) &= U_5\left( x(5\tau)^{-m} x(\tau)^l \right),\\
\frac{1}{x^{m}}\cdot z^6\cdot U^{(0)}\left( x^l \right) &= U_5\left( \mathcal{A}(\tau) x(5\tau)^{-m} z(5\tau)^6 x^l \right).
\end{align*}  By Table \ref{tablew0}, we can verify that
\begin{align*}
x(5\tau)^{-5} x(\tau)^l&\in\mathcal{M}^{\infty}(\Gamma_0(50)),\\
\mathcal{A}(\tau) x(5\tau)^{-5} z(5\tau)^6 x(\tau)^l&\in\mathcal{M}^{\infty}\left(\Gamma_0(50)\right).
\end{align*}
\begin{lemma}
If $f\in\mathcal{M}^{\infty}(\Gamma_0(50))$, then $U_5\left(f\right)\in\mathcal{M}^{\infty}(\Gamma_0(10))$.
\end{lemma}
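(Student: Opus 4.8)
The plan is to combine the general modularity of $U_5$ (Theorem~\ref{uelleffectsmod}) with an explicit cusp-by-cusp analysis of $U_5(f)$ on $\Gamma_0(10)$. Since $5^2=25$ divides $50$ and $\mathcal{M}^{\infty}(\Gamma_0(50))\subseteq\mathcal{M}(\Gamma_0(50))$, Theorem~\ref{uelleffectsmod} immediately gives $U_5(f)\in\mathcal{M}(\Gamma_0(10))$, and $U_5(f)$ is holomorphic on $\mathbb{H}$ because each translate $f((\tau+r)/5)$ is. So the only thing left to verify is that $\mathrm{ord}_{a/c}^{(10)}(U_5(f))\ge 0$ at every cusp of $\Gamma_0(10)$ other than $[\infty]_{10}$. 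Using $1/10$ as a representative of $[\infty]_{10}$, Lemma~\ref{cuspEqualLemma} shows that $a/c\in[\infty]_{10}$ exactly when $10\mid c$; hence the cusps still to be treated have reduced representatives $a/c$ with $c\in\{1,2,5\}$.

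Next, for a fixed such cusp I would choose $\sigma=\left(\begin{smallmatrix}a&b\\c&d\end{smallmatrix}\right)\in\mathrm{SL}(2,\mathbb{Z})$ and invoke Lemma~\ref{impUprop}(3) in the shape $5\,U_5(f)(\tau)=\sum_{r=0}^{4}f\!\left(\tfrac{\tau+r}{5}\right)$, with the convention $q^{1/5}=e^{2\pi i\tau/5}$. Composing with $\sigma$ gives $5\,U_5(f)(\sigma\tau)=\sum_{r=0}^{4}f(A_r\tau)$, where $A_r=\left(\begin{smallmatrix}1&r\\0&5\end{smallmatrix}\right)\sigma=\left(\begin{smallmatrix}a+rc&b+rd\\5c&5d\end{smallmatrix}\right)$ has determinant $5$. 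Putting $A_r$ in Hermite normal form on the right, $A_r=\gamma_r\left(\begin{smallmatrix}t_r&u_r\\0&5/t_r\end{smallmatrix}\right)$ with $\gamma_r=\left(\begin{smallmatrix}a_r&b_r\\c_r&d_r\end{smallmatrix}\right)\in\mathrm{SL}(2,\mathbb{Z})$, $t_r\in\{1,5\}$ and $0\le u_r<5/t_r$, comparison of lower-left entries yields $c_r t_r=5c$, so $c_r=5c/t_r\in\{c,5c\}\subseteq\{1,2,5,10,25\}$; in particular $50\nmid c_r$. Thus the cusp $[a_r/c_r]_{50}=[\gamma_r\infty]_{50}$ of $\Gamma_0(50)$ is \emph{not} $[\infty]_{50}$, so $f$ (which by hypothesis has a pole only at $[\infty]_{50}$) is holomorphic at $[a_r/c_r]_{50}$.

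Finally, I would push this holomorphy through the change of variables. Writing $A_r\tau=\gamma_r\!\left(\tfrac{t_r^2\tau+t_ru_r}{5}\right)$ and using that the Fourier expansion of $f\circ\gamma_r$ near $[a_r/c_r]_{50}$ has only nonnegative powers of $e^{2\pi i w\gcd(c_r^2,50)/50}$, one gets that $f(A_r\tau)$ is a power series in $e^{2\pi i\tau\, t_r^2\gcd(c_r^2,50)/250}$ (up to a root-of-unity factor) with nonnegative leading exponent, hence tends to a finite limit as $\tau\to i\infty$. Summing over $0\le r\le 4$, $U_5(f)(\sigma\tau)$ stays bounded as $\tau\to i\infty$, i.e. $\mathrm{ord}_{a/c}^{(10)}(U_5(f))\ge 0$. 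Carrying this out for $c=1,2,5$ yields $U_5(f)\in\mathcal{M}^{\infty}(\Gamma_0(10))$.

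The one genuinely delicate step is the bookkeeping of the Hermite decomposition in the middle paragraph. The decisive arithmetic fact there is that a cusp $a/c$ of $\Gamma_0(10)$ with $c\ne 10$ has $c\in\{1,2,5\}$, so each lift $c_r=5c/t_r$ fails to be divisible by $50$; this is exactly what keeps the pole of $f$ at $[\infty]_{50}$ from reaching $U_5(f)$ at any cusp but $[\infty]_{10}$. The remaining work---writing out $\gamma_r,t_r,u_r$ for each $r$ and chasing the cusp expansions---is routine and can be done uniformly.
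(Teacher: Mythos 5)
Your proof is correct, but it takes a genuinely different route from the paper's. The paper argues directly from $5\,U_5(f)(\tau)=\sum_{r=0}^{4}f\bigl(\tfrac{\tau+r}{5}\bigr)$: a pole of $U_5(f)$ at a rational point $h/k$ can only come from a pole of some translate $f$ at $\tfrac{h+kr}{5k}$, and membership of $\tfrac{h+kr}{5k}$ in $[\infty]_{50}$ is shown, via Lemma \ref{cuspEqualLemma}, to force $10\mid k$, i.e.\ $h/k\in[\infty]_{10}$; membership of $U_5(f)$ in $\mathcal{M}\left(\Gamma_0(10)\right)$ is implicitly supplied by Theorem \ref{uelleffectsmod}. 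You establish the same underlying arithmetic fact---the denominators arising upstairs are $c$ or $5c$, hence never divisible by $50$ when $10\nmid c$---but you package it in the standard coset/Hermite-normal-form computation: decompose $\left(\begin{smallmatrix}1&r\\0&5\end{smallmatrix}\right)\sigma=\gamma_r\left(\begin{smallmatrix}t_r&u_r\\0&5/t_r\end{smallmatrix}\right)$, identify the cusps $[a_r/c_r]_{50}$ lying over $[a/c]_{10}$, and push the holomorphy of $f$ there through the $q$-expansions of Definition \ref{DefnModular}. Your version is more careful about what ``order at a cusp'' means for the translates (the paper's limit-point language is looser) and it makes the appeal to Theorem \ref{uelleffectsmod} explicit, which the paper's proof needs but does not cite; the paper's version is shorter and uses only the cusp-equivalence lemma. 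Two small remarks: the reduction to representatives with $c\in\{1,2,5\}$ rests on the standard fact that every cusp of $\Gamma_0(10)$ has a representative whose denominator divides $10$, but you do not actually need it, since your computation $c_r\in\{c,5c\}$ with $50\nmid c_r$ works verbatim for any reduced $a/c$ with $10\nmid c$; and the final inference ``bounded as $\tau\to i\infty$ implies nonnegative order'' is legitimate precisely because $U_5(f)$ is already known to be meromorphic at the cusps of $\Gamma_0(10)$, which is again what Theorem \ref{uelleffectsmod} provides.
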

\begin{proof}
We can express $U_5\left(f\right)$ as
\begin{align*}
5\cdot U_5\left(f(\tau)\right) = \sum_{r=0}^4 f\left( \frac{\tau+r}{5}\right).
\end{align*}  As such, $U_5\left(f\right)$ will have a pole wherever $f\left( \frac{\tau+r}{5}\right)$ has a pole for $0\le r\le 4$.

Let us suppose that $\tau$ approaches the rational point $h/k\in\mathbb{Q}$, with $\mathrm{gcd}(h,k)=1$.  Then
\begin{align*}
\frac{\tau+r}{5}&\rightarrow \frac{h+kr}{5k}.
\end{align*}  If $f$ has a pole at $\frac{h+kr}{5k}$, then $\frac{h+kr}{5k}\in[\frac{1}{50}]_{50}$.  Thus, by Lemma \ref{cuspEqualLemma}, there exist integers $j,y$ such that
\begin{align}
&y\equiv h+kr+5jk\pmod{50},\label{cuspto50a}\\
&50\equiv 5ky\pmod{50},\label{cuspto50b}\\
&\mathrm{gcd}(y,50)=1.\label{cuspto50c}
\end{align}  By (\ref{cuspto50b}), (\ref{cuspto50b}), we must have $k\equiv 0\pmod{10}$, and therefore $5k\equiv 0\pmod{50}$.  We therefore set $k=10m$ for $m\in\mathbb{Z}$.

We want to show that $\frac{h+kr}{50m}\in [\frac{1}{10}]_{10}$.  Again using Lemma \ref{cuspEqualLemma}, we need to demonstrate that there exist $y,j\in\mathbb{Z}$ such that
\begin{align*}
&y\equiv h+kr+50mj\pmod{10},\\
&10\equiv 50my\pmod{10},\\
&\mathrm{gcd}(y,10)=1.
\end{align*}  Simplifying, we have
\begin{align*}
&y\equiv h+kr\pmod{10},\\
&\mathrm{gcd}(y,10)=1.
\end{align*}  Notice that $\mathrm{gcd}(h+kr,10)=\mathrm{gcd}(h+50m,10)=\mathrm{gcd}(h,10)=1$, since $\mathrm{gcd}(h,k)=\mathrm{gcd}(h,50m)=1$. Therefore, we can simply set $y=h+kr$ and $j=0$.  Therefore, $\frac{h+kr}{50m}\in [\frac{1}{10}]_{10}$.  In other words, if $f$ has a pole only at the cusp $[\infty]_{50}$, then $U_5\left(f\right)$ has a pole only at $[\infty]_{10}$.
\end{proof}
We show in our supplement that 
\begin{align}
x(5\tau)^{-20} x(\tau)^l &\in\mathcal{M}^{\infty}(\Gamma_0(50)),\label{initialrel1riggedA}\\
\mathcal{A}(\tau) x(5\tau)^{-53} z(5\tau)^6 x(\tau)^l &\in\mathcal{M}^{\infty}(\Gamma_0(50)).\label{initialrel0riggedA}
\end{align}  It is now only a matter of applying the $U_5$ operator and computing these principal parts, and confirming that they match the right-hand sides of (\ref{initial10})-(\ref{initial04}), adjusted by multiplying $z^{6\cdot (1-i)}/x^m$, where $m$ is the degree of the right-hand side in $x$; for $i=0$ we take $m=53$, and for $i=1$ we take $m=20$, since these are the maximal degrees in their respective cases.  A more careful computation can of course be more efficient, but we are interested in simplicity of presentation for the time being.
\begin{corollary}
The witness identity (\ref{L1defninx}) is true.
\end{corollary}
\begin{proof}
We established in (\ref{aboutL1andu0}) that $U^{(0)}(1)=L_1$.  Taking (\ref{initialrel0riggedA}) for $l=0$, we therefore have
\begin{align}
\mathcal{A}(\tau) x(5\tau)^{-53} z(5\tau)^6 &\in\mathcal{M}^{\infty}(\Gamma_0(50)),\label{initialrel0riggedB}\\
U_5\left(\mathcal{A}(\tau) x(5\tau)^{-53} z(5\tau)^6\right) &\in\mathcal{M}^{\infty}(\Gamma_0(10)).\label{initialrel0riggedC}
\end{align}  In our supplement we compare with the right-hand side of (\ref{u01l1etc}) below, after multiplying by $z^6$ and dividing by $x^{53}$.
\end{proof}
\subsection{Proof of the Modular Equation}
As a final application of the modular cusp analysis, we can confirm using Table \ref{tablew0} that
\begin{align*}
x(5\tau)^{-5}\cdot x(\tau)\in\mathcal{M}^{\infty}\left(\Gamma_0(50)\right).
\end{align*}  Therefore, we can multiply $x(5\tau)^{-5}$ onto the left-hand side of (\ref{modX}) to show
\begin{align}
x(5\tau)^{-5}\cdot\left(x^5+\sum_{j=0}^4 a_j(5\tau) x^j\right)\in\mathcal{M}^{\infty}\left(\Gamma_0(50)\right).
\end{align}  As we show in our Mathematica supplement, it can be quickly computed that this modular function has no principal part, and constant term 0, confirming that it must be equal to 0.  This verifies (\ref{modX}).
\section{Discussion}\label{finalsection}
It is remarkable to note the implications of the importance of $\Omega$ in our proof of Theorem \ref{Thm12}.  In particular we are enormously interested in the importance of the kernel stability condition proved in Theorem \ref{oddbacktoodd}.  Not only is the vector $\left( s(m) \right)_{m\ge 2}$ associated with a given $f\in\mathcal{V}_n^{(1)}$ in the kernel of $\Omega$, but this property is \textit{genetic}: it is inherited by $\frac{1}{5}U^{(0)}\circ U^{(1)}\left( f \right)$.

It is this property which appears to explain why the congruence family is there at all.  Suppose we take a typical sequence of functions over, say, $\mathrm{X}_0(10)$, with integer-valued Fourier coefficients, in which each function is related to its successor via some well-defined modified $U_5$ and $T_5$ operators.  In this case, we might expect that this sequence would eventually become divisible by arbitrarily large powers of 5 (considering classical results, e.g, \cite[Theorem 8]{Serre}).  However, this is not universal if we restrict ourselves to, say, an alternating pair of $U_5$ operators as we have done here (see the discussions in \cite[Section 1.3]{Paule} and \cite[Section 6]{RaduS}).

And even if 5-adic convergence to 0 is ensured, it may be very slow and irregular.  This is because the members of our sequence will be composed of various different basis functions which will not all converge at the same rate.  We certainly do not expect the precise, clean pattern of $5$-adic convergence that we see for strict congruence families.  Sometimes a function in the sequence will be composed of basis functions in just the right manner that any anomalous terms not divisible by 5 will cancel out.  Nevertheless, successive functions may not have this property, in which case anomalous terms may occur later in the sequence.

However, very occasionally a sequence will exist in which the initial functions are composed of basis functions in so delicate and precise a manner that the cancellation condition is genetic, in which case the accumulation of anomalous terms is prevented at every step.  A congruence family is the manifestation of such a function sequence.

We see that the kernel of the congruence mapping $\Omega$ describes this cancellation condition; and stability of membership in the kernel describes the conditions in which a congruence family can be found.

Thus, some combinatorial objects (in our case, $d_5(n)$) exhibit a given precise set of divisibility properties as a result of the way that certain associated modular functions relate to one another.  In some ways, it is almost an inverse to the notion of a partition crank---that is, it gives an analytic explanation for why certain combinatorial properties exist.

The reason that this has been largely undiscovered before now is that for most classical congruence families (e.g., those for $p(n)$), the operator analogous to $\Omega$ is trivial: it sends every associated modular vector $\left(s(m)\right)_{m\ge 0}$ to zero, and the kernel is therefore the entire set.  This in some manner constitutes a measure of the relative simplicity of the associated spaces of modular functions (e.g., the curve $\mathrm{X}_0(5)$ associated with Ramanujan's congruences for $p(n)$ modulo powers of 5).  In the case of other families, as in Theorem \ref{Thm12}, the analogous operator (in our case, $\Omega$ itself) is nontrivial, and must be taken into account before a proper understanding of the congruence family can be achieved.

The most important question facing us appears to be \textit{under what conditions} the kernel of the associated congruence operator is trivial.  We believe that this is an especially promising long-term research question.
\pagebreak
\section{Appendix I}
\begin{align}
U^{(1)}\left( 1 \right) &=1\label{initial10}\\
U^{(1)}\left( x \right) &=41 x + 860 x^2 + 6800 x^3 + 24000 x^4 + 32000 x^5\\
U^{(1)}\left( x^2 \right) &=86 x + 10195 x^2 + 366600 x^3 + 6534800 x^4 + 68384000 x^5 + 
 450720000 x^6 + 1907200000 x^7\\& + 5056000000 x^8 + 7680000000 x^9 + 
 5120000000 x^{10}\nonumber\\
U^{(1)}\left( x^3 \right) &=51 x + 27495 x^2 + 2836265 x^3 + 128688900 x^4 + 3343692000 x^5 + 
 56283680000 x^6\\&\nonumber + 656205600000 x^7 + 5502096000000 x^8 + 
 33821312000000 x^9 + 153192960000000 x^{10}\\&\nonumber + 506956800000000 x^{11} + 
 1195008000000000 x^{12} + 1904640000000000 x^{13} + 
 1843200000000000 x^{14}\\&\nonumber + 819200000000000 x^{15}\nonumber\\
U^{(1)}\left( x^4 \right) &=12 x + 32674 x^2 + 8579260 x^3 + 831492275 x^4 + 42958434000 x^5 + 
 1396773180000 x^6\\& + 31314949600000 x^7 + 511802288800000 x^8 + 
 6319880448000000 x^9 + 60349364480000000 x^{10}\nonumber\\& + 
 452174745600000000 x^{11} + 2679038592000000000 x^{12} + 
 12574269440000000000 x^{13}\nonumber\\& + 46561935360000000000 x^{14} + 
 134544588800000000000 x^{15} + 297365504000000000000 x^{16}\nonumber\\& + 
 485949440000000000000 x^{17} + 553779200000000000000 x^{18} + 
 393216000000000000000 x^{19}\nonumber\\& + 131072000000000000000 x^{20}\nonumber\\
U^{(0)}\left( 1 \right) &= \frac{1}{(1+5x)^6}(5705x^2+6840120x^3+2034152125x^4+280484938650x^{5}+22921365211325x^{6}\label{u01l1etc}\\&+1260917405154520x^{7}
+50400843190048480x^{8}+1539115922208139200x^{9}\nonumber\\&+37183654303328448000x^{10}+728924483359472640000x^{11}+11816089262411136000000x^{12}\nonumber\\&+160681440628058880000000x^{13}+1853291134193264640000000x^{14}\nonumber\\
&+18284160727362809856000000x^{15}+155286793010086625280000000x^{16}\nonumber\\&+1140657222505472000000000000x^{17}+7269894420215070720000000000x^{18}\nonumber\\&+40277647277404979200000000000x^{19}
+194099187864646451200000000000x^{20}\nonumber\\&+813054581193729638400000000000x^{21} +2954545150241538048000000000000x^{22}\nonumber\\&+9282005730758492160000000000000x^{23} +25080951875200614400000000000000x^{24}\nonumber\\&+57872525958316032000000000000000x^{25}
+112916020309524480000000000000000x^{26}\nonumber\\&+183812885074411520000000000000000x^{27} +245082228994867200000000000000000x^{28}\nonumber\\&+260725452832768000000000000000000x^{29} +212837104353280000000000000000000x^{30}\nonumber\\&+125198296678400000000000000000000x^{31}
+47244640256000000000000000000000x^{32}\nonumber\\&+8589934592000000000000000000000x^{33})\nonumber.
\end{align}
\begin{align}
U^{(0)}\left( x \right) &=\frac{1}{(1+5x)^6}\big(1596 x^2 + 5311629 x^3 + 3020673965 x^4 + 693946917880 x^5 + 
 88012336687140 x^6\\& + 7203973630079449 x^7 + 417078090095103516 x^8 + 
 18123681491802321200 x^9\nonumber\\& + 615843754566814808000 x^{10} + 
 16857245810889643680000 x^{11} + 380061235251469769600000 x^{12}\nonumber\\& + 
 7179360858330987609600000 x^{13} + 115153019537919900211200000 x^{14}\nonumber\\& + 
 1584889574408762616832000000 x^{15} + 
 18875287036126384148480000000 x^{16}\nonumber\\& + 
 195815273618900539392000000000 x^{17} + 
 1778815480050553692160000000000 x^{18}\nonumber\\& + 
 14206927272980568637440000000000 x^{19} + 
 100058873107538207703040000000000 x^{20}\nonumber\\& + 
 622718357721614503116800000000000 x^{21} + 
 3428656886761288105984000000000000 x^{22}\nonumber\\& + 
 16707165479275661885440000000000000 x^{23} + 
 72013094304097396326400000000000000 x^{24}\nonumber\\& + 
 274190033219424878592000000000000000 x^{25} + 
 920048921836076924928000000000000000 x^{26}\nonumber\\& + 
 2711506126769477386240000000000000000 x^{27} + 
 6985969318446812364800000000000000000 x^{28}\nonumber\\& + 
 15637700398221885440000000000000000000 x^{29}\nonumber\\& + 
 30166681246666588160000000000000000000 x^{30}\nonumber\\& + 
 49621782059863244800000000000000000000 x^{31}\nonumber\\& + 
 68625507039156633600000000000000000000 x^{32}\nonumber\\& + 
 78284718944026624000000000000000000000 x^{33}\nonumber\\& + 
 71718179952394240000000000000000000000 x^{34}\nonumber\\& + 
 50720986785382400000000000000000000000 x^{35}\nonumber\\& + 
 25993142075392000000000000000000000000 x^{36}\nonumber\\& + 
 8589934592000000000000000000000000000 x^{37}\nonumber\\& + 
 1374389534720000000000000000000000000 x^{38}\big)\nonumber
\end{align}
\begin{align}
U^{(0)}\left( x^2 \right) &=\frac{1}{(1+5x)^6}\big(268 x^2 + 2847432 x^3 + 3155658820 x^4 + 1202043333790 x^5 + 
 233251365647870 x^6\\& + 27857600543181592 x^7 + 
 2282412359335489853 x^8 + 137474599581860685500 x^9\nonumber\\& + 
 6382595114107468178000 x^{10} + 236333942045815117200000 x^{11}\nonumber\\& + 
 7159344666536530274720000 x^{12} + 180943148092126406540160000 x^{13}\nonumber\\& + 
 3874442157232846507737600000 x^{14} + 
 71154385951639684561408000000 x^{15}\nonumber\\& + 
 1131931871787660010234880000000 x^{16} + 
 15724017231814160548864000000000 x^{17}\nonumber\\& + 
 191993970181152296671232000000000 x^{18} + 
 2071679521214318766489600000000000 x^{19}\nonumber\\& + 
 19840725410941052260024320000000000 x^{20} + 
 169240207863514163393331200000000000 x^{21}\nonumber\\& + 
 1289257956753574377619456000000000000 x^{22} + 
 8789154839425847410032640000000000000 x^{23}\nonumber\\& + 
 53694599262146012197683200000000000000 x^{24}\nonumber\\& + 
 294194062113759948701696000000000000000 x^{25}\nonumber\\& + 
 1445922789195393074724864000000000000000 x^{26}\nonumber\\& + 
 6372283630042655710248960000000000000000 x^{27}\nonumber\\& + 
 25156363490877335666688000000000000000000 x^{28}\nonumber\\& + 
 88814063288837761662976000000000000000000 x^{29}\nonumber\\& + 
 279743502586000693002240000000000000000000 x^{30}\nonumber\\& + 
 783588083068217747046400000000000000000000 x^{31}\nonumber\\& + 
 1943797721585087728844800000000000000000000 x^{32}\nonumber\\& + 
 4247467234308952948736000000000000000000000 x^{33}\nonumber\\& + 
 8120384728408910725120000000000000000000000 x^{34}\nonumber\\& + 
 13465927624462381875200000000000000000000000 x^{35}\nonumber\\& + 
 19155038490681409536000000000000000000000000 x^{36}\nonumber\\& + 
 23036102962346721280000000000000000000000000 x^{37}\nonumber\\& + 
 22969782997939650560000000000000000000000000 x^{38}\nonumber\\& + 
 18482412505792512000000000000000000000000000 x^{39}\nonumber\\& + 
 11532502585835520000000000000000000000000000 x^{40}\nonumber\\& + 
 5236424127283200000000000000000000000000000 x^{41}\nonumber\\& + 
 1539316278886400000000000000000000000000000 x^{42}\nonumber\\& + 
 219902325555200000000000000000000000000000 x^{43}\big)\nonumber
\end{align}
\begin{align}
U^{(0)}\left( x^3 \right) &=\frac{1}{(1+5x)^6}\big(25 x^2 + 1083750 x^3 + 2419268600 x^4 + 1533044850875 x^5 + 
 451892277223875 x^6\\& + 77776397020017600 x^7 + 8876969029993551625 x^8 + 727751880723215938525 x^9\nonumber\\& + 
 45237746915792486076500 x^{10} + 2216202089061921720156000 x^{11}\nonumber\\& + 88063763926314004467152000 x^{12} + 
 2901622367042821273526240000 x^{13}\nonumber\\& + 
 80660306943461135362236800000 x^{14} + 
 1918093917299266390588800000000 x^{15}\nonumber\\& + 
 39459730054782721266716160000000 x^{16} + 
 708795736244147980459443200000000 x^{17}\nonumber\\& + 
 11201811678733852133717299200000000 x^{18} + 
 156752971920695775627182080000000000 x^{19}\nonumber\\& + 
 1952565980789217283863347200000000000 x^{20}\nonumber\\& + 
 21745861234519182941633740800000000000 x^{21}\nonumber\\& + 
 217330181296111203156344832000000000000 x^{22}\nonumber\\& + 
 1954986367634769250929868800000000000000 x^{23}\nonumber\\& + 
 15867347909658771160720998400000000000000 x^{24}\nonumber\\& + 
 116421209420849214282399744000000000000000 x^{25}\nonumber\\& + 
 773301464019936672352829440000000000000000 x^{26}\nonumber\\& + 
 4654578055099292525988413440000000000000000 x^{27}\nonumber\\& + 
 25401759242372849348693196800000000000000000 x^{28}\nonumber\\& + 
 125704489323417659460550656000000000000000000 x^{29}\nonumber\\& + 
 563904329114669957135728640000000000000000000 x^{30}\nonumber\\& + 
 2291384934896565019580825600000000000000000000 x^{31}\nonumber\\& + 
 8423518909348688068345856000000000000000000000 x^{32}\nonumber\\& + 
 27966210361515509645574144000000000000000000000 x^{33}\nonumber\\& + 
 83658769333926385994956800000000000000000000000 x^{34}\nonumber\\& + 
 224821089074620615622656000000000000000000000000 x^{35}\nonumber\\& + 
 540741258958646666067968000000000000000000000000 x^{36}\nonumber\\& + 
 1158650329533680577413120000000000000000000000000 x^{37}\nonumber\\& + 
 2198994699321618726912000000000000000000000000000 x^{38}\nonumber\\& + 
 3670226839781351882752000000000000000000000000000 x^{39}\nonumber\\& + 
 5338951094289689477120000000000000000000000000000 x^{40}\nonumber\\& + 
 6691954927703970283520000000000000000000000000000 x^{41}\nonumber\\& + 
 7121217229055422627840000000000000000000000000000 x^{42}\nonumber\\& + 
 6307992271770668236800000000000000000000000000000 x^{43}\nonumber\\& + 
 4525788871530643456000000000000000000000000000000 x^{44}\nonumber\\& + 
 2526853642489692160000000000000000000000000000000 x^{45}\nonumber\\& + 
 1030022492900556800000000000000000000000000000000 x^{46}\nonumber\\& + 
 272678883688448000000000000000000000000000000000 x^{47}\nonumber\\& + 
 35184372088832000000000000000000000000000000000 x^{48}\big)\nonumber
\end{align}
\begin{align}
U^{(0)}\left( x^4 \right) &=\frac{1}{(1+5x)^6}\big(x^2 + 296324 x^3 + 1400331440 x^4 + 1491537289180 x^5 + 
 666654357758190 x^6\label{initial04}\\ 
&+ 164127484896644144 x^7 + 25818034832153226421 x^8 + 2843942677492333687050 x^9\nonumber\\ 
&+233247182152327524438975 x^{10} + 14876273050366789151536400 x^{11}\nonumber\\ &+ 761935276117868629545420000 x^{12} + 
 32118457754484194871176800000 x^{13}\nonumber\\ &+ 
 1135956912571962510189130400000 x^{14} + 
 34231269539693917027064800000000 x^{15}\nonumber\\ & + 
 889959895359737292913435520000000 x^{16} + 
 20168696740441852125087521280000000 x^{17}\nonumber\\ & + 
 401853770227841145701015936000000000 x^{18} + 
 7090293594621615513260024832000000000 x^{19}\nonumber\\ & + 
 111455620826575249212238069760000000000 x^{20}\nonumber\\ & + 
 1568967903706184088975184281600000000000 x^{21}\nonumber\\ & + 
 19865297696614393785791188992000000000000 x^{22}\nonumber\\ & + 
 227065713010913459721992798208000000000000 x^{23}\nonumber\\ & + 
 2350395554799549839893043609600000000000000 x^{24}\nonumber\\ & + 
 22090326137698781193612296192000000000000000 x^{25}\nonumber\\ & + 
 188920535524484098164060192768000000000000000 x^{26}\nonumber\\ & + 
 1472786798447713704939919769600000000000000000 x^{27}\nonumber\\ & + 
 10480689728875682601093981798400000000000000000 x^{28}\nonumber\\ & + 
 68152666405070998605987315712000000000000000000 x^{29}\nonumber\\ & + 
 405254490887530431856613785600000000000000000000 x^{30}\nonumber\\ & + 
 2204414239119153895179708006400000000000000000000 x^{31}\nonumber\\ & + 
 10970036975349264582008478105600000000000000000000 x^{32}\nonumber\\ & + 
 49929921460604339849107865600000000000000000000000 x^{33}\nonumber\\ & + 
 207727909385809764269938442240000000000000000000000 x^{34}\nonumber\\ & + 
 789224428145826945154469068800000000000000000000000 x^{35}\nonumber\\ & + 
 2734666462414638550878257152000000000000000000000000 x^{36}\nonumber\\ & + 
 8626866866620198090933534720000000000000000000000000 x^{37}\nonumber\\ & + 
 24722506007012126262789406720000000000000000000000000 x^{38}\nonumber\\ & + 
 64185847480498768484237312000000000000000000000000000 x^{39}\nonumber\\ & + 
 150467914734947763855818752000000000000000000000000000 x^{40}\nonumber\\ & + 
 317205685483135843705028608000000000000000000000000000 x^{41}\nonumber\\ & + 
 598387657370971073308262400000000000000000000000000000 x^{42}\nonumber\\ & + 
 1004032323013383412514816000000000000000000000000000000 x^{43}\nonumber\\ & + 
 1487355820084245534081024000000000000000000000000000000 x^{44}\nonumber\\ & + 
 1927442844590278705152000000000000000000000000000000000 x^{45}\nonumber\\ & + 
 2159710523624808618393600000000000000000000000000000000 x^{46}\nonumber\\ & + 
 2061307549285828316364800000000000000000000000000000000 x^{47}\nonumber\\ & + 
 1642771482986634280960000000000000000000000000000000000 x^{48}\nonumber\\ & + 
 1063549153298423480320000000000000000000000000000000000 x^{49}\nonumber
\end{align}
\begin{align*}& + 
 537321656791806771200000000000000000000000000000000000 x^{50}\\ & + 
 198721333557723136000000000000000000000000000000000000 x^{51}\\ & + 
 47850746040811520000000000000000000000000000000000000 x^{52}\\ & + 
 5629499534213120000000000000000000000000000000000000 x^{53}\big)
\end{align*}
\section{Acknowledgments}
The first author was funded by the Austrian Science Fund (FWF): W1214-N15, Project DK6.  The second author was funded in whole by the Austrian Science Fund (FWF): Einzelprojekte P 33933, ``Partition Congruences by the Localization Method".  Our most profound thanks to the Austrian Government and People for their generous support.

A huge thanks to the anonymous referee for their careful review and their constructive criticism which has substantially improved the paper.


\begin{thebibliography}{X}
\bibitem{ABD} Z. Ahmed, N. D. Baruah, M. G. Dastidar, ``New congruences modulo $5$ for the number
of $2$-color partitions", \textit{Journal of Number Theory} 157, pp. 184--198 (2015).	

\bibitem{Andrews} G.E. Andrews, \textit{The Theory of Partitions}, \textit{Encyclopedia of Mathematics and its Applications} 2, Addison-Wesley (1976). Reissued, Cambridge (1998).

\bibitem{AndrewsPaule} G.E. Andrews, P. Paule, ``MacMahon's Partition Analysis XIII: Schmidt Type Partitions and Modular Forms," \textit{Journal of Number Theory} 234, pp. 95-119 (2022).

\bibitem{Atkin} A.O.L. Atkin, ``Proof of a Conjecture of Ramanujan,'' \textit{Glasgow Mathematical Journal} 8, pp. 14-32 (1967).

\bibitem{AtkinL} A.O.L. Atkin, J. Lehner, ``Hecke Operators on $\Gamma_0(M)$,'' \textit{Mathematische Annalen} 185, pp. 134-160 (1970).

\bibitem{Baruahet} N.D. Baruah, H. Das, P. Talukdar, ``Congruences for $k$-elongated plane partition diamonds,''  \textit{International Journal of Number Theory} 19, pp. 2121--2139 (2023) 

\bibitem{Berndt} B.C. Berndt, \textit{Ramanujan's Notebooks} III, Springer (1991).

\bibitem{bcb1} B.C. Berndt, \textit{Number Theory in the Spirit of Ramanujan}, American Mathematical Society, Providence, RI (2006).

\bibitem{BO} B.C. Berndt, K. Ono, ``Ramanujan's Unpublished Manuscript on the Partition and Tau Functions,'' \textit{The Andrews Festschrift}, Springer-Verlag, pp. 39-110 (2001).

\bibitem{dasilvaet} R. da Silva, M. Hirschhorn, J. Sellers, ``Elementary Proofs for Infinitely Many Congruences for $k$-Elongated Partition Diamonds'' \textit{Discrete Mathematics} 345 (2022), p. 113021.

\bibitem{Diamond} F. Diamond, J. Shurman, \textit{A First Course in Modular Forms}, 4th Printing., Springer Publishing (2016).

\bibitem{Knopp} M. Knopp, \textit{Modular Functions in Analytic Number Theory}, 2nd Ed., AMS Chelsea Publishing (1993).

\bibitem{Lehner} J. Lehner, \textit{Discontinuous Groups and Automorphic Functions}, Mathematical Surveys and Monographs Number 8, American Mathematical Society (1964).

\bibitem{MacMahon} P. A. MacMahon, \textit{Combinatory Analysis}, Cambridge University Press, Cambridge, 1915-1916 (Reprinted: Chelsea, New York, 1960).

\bibitem{Newman}  M. Newman, ``Construction and Application of a Class of Modular Functions (II)," \textit{Proc. London Math. Soc.}, 3 (1959).

\bibitem{Paule}  P. Paule, S. Radu, ``The Andrews--Sellers Family of Partition Congruences," \textit{Advances in Mathematics} 230, pp. 819-838 (2012).

\bibitem{Radu} S. Radu, ``An Algorithmic Approach to Ramanujan--Kolberg Identities," \textit{Journal of Symbolic Computation}, 68, pp. 225-253 (2015).

\bibitem{RaduS} S. Radu, N.A. Smoot, ``A Method of Verifying Partition Congruences by Symbolic Computation," \textit{Journal of Symbolic Computation,} Volume 104, pp. 105-133 (2021)

\bibitem{Ramanujan} S. Ramanujan, ``Some Properties of $p(n)$, the Number of Partitions of $n$", \textit{Proceedings of the Cambridge Philosophical Society} 19, pp. 207-210 (1919).

\bibitem{Serre} J-P. Serre, ``Formes Modulaires et Fonctions Zêta $p$-adiques," \textit{Summer School on Modular Functions}, Antwerp, pp. 192--267 (1972).

\bibitem{Sellers} J. Sellers, ``Congruences Involving F-Partition Functions,'' \textit{International Journal of Mathematics and Mathematical Sciences} 17, pp. 187-188 (1994).

\bibitem{Smoot2} N. A. Smoot, ``A Congruence Family For 2-Elongated Plane Partitions: An Application of the Localization Method", \textit{Journal of Number Theory} 242, pp. 112--153 (2023).

\bibitem{Smoot0} N. A. Smoot, ``A Single-Variable Proof of the Omega SPT Congruence Family Over Powers of 5'', \textit{Ramanujan Journal} 62, 1--45 (2023).

\bibitem{Smoot1} N. A. Smoot, ``On the Computation of Identities Relating Partition Numbers in Arithmetic Progressions with Eta Quotients: An Implementation of Radu's Algorithm,'' \textit{Journal of Symbolic Computation} 104, pp. 276-311 (2021).

\bibitem{Watson} G.N. Watson, ``Ramanujans Vermutung über Zerfallungsanzahlen," \textit{J. Reine Angew. Math.} 179, pp. 97-128 (1938).

\bibitem{Yao} O.X.M. Yao, ``Infinite Families of Congruences Modulo 5 and 7 for the Cubic Partition Function," \textit{Proceedings of the Royal Society of Edinburgh} (2017).

\end{thebibliography}
\end{document}